\theoremstyle{definition}
\newtheorem{theorem}{Theorem}[section]
\newtheorem{lemma}[theorem]{Lemma}
\newtheorem{corollary}[theorem]{Corollary}
\newtheorem{proposition}[theorem]{Proposition}
\newtheorem{defi}[theorem]{Definition}
\newtheorem{remark}[theorem]{Remark}
\newtheorem{ex}[theorem]{Example}
\newtheorem{maintheorem}{Main Theorem}
\numberwithin{equation}{section}
\def\ie{{\em i.e.,} }
\def\eg{{\em e.g.} }
\newfont\bbf{msbm10 at 12pt}
\def\eps{\varepsilon}
\def\phi{\varphi}
\def\R{{\mathbb R}}
\def\C{{\mathbb C}}
\def\None{{\mathbb N}}
\def\N0{{\mathbb N}_0}
\def\Z{{\mathbb Z}}
\def\C0{{\mathfrak C}}
\def\Rr{{\mathfrak R}}
\def\A{{\mathfrak A}}
\def\Dlev{\mathfrak D}
\def\Cl{\mathop\mathrm{Cl}}
\def\mesh{\mathop\mathrm{mesh}}
\def\nin{\not\in}
\def\wasv{n}
\def\diam{\mbox{\rm diam}\,}
\def\orb{\mbox{\rm orb}}
\def\theta{\vartheta}
\def\le{\leqslant}
\def\ge{\geqslant}
\def\IL{\underleftarrow\lim([0,s/2],T_s)}
\def\ILp{\underleftarrow\lim([0,s'/2],T_{s'})}
\def\ILcores{\underleftarrow\lim([c_2, c_1],T_{s})}
\def\ILcoresp{\underleftarrow\lim([c_2, c_1],T_{s'})}
\def\chain{{\mathcal C}}
\newcommand{\bd}{\partial}
\begin{document}

\title[Fibonacci-like unimodal inverse limits
and the core Ingram conjecture]
{Fibonacci-like unimodal inverse limit spaces
and the core Ingram conjecture}
\author{H.~Bruin \quad and \quad S.~\v{S}timac}
\thanks{HB was supported by EPSRC grant EP/F037112/1.
S\v{S} was supported in part by the MZOS Grant
037-0372791-2802 of the Republic of Croatia.}
%\date{\today}

%{\it 2010 Mathematics Subject Classification:}
\subjclass[2000]{54H20, 37B45, 37E05}
%{\it Key words and phrases:}
\keywords{tent map, inverse limit space, Fibonacci unimodal map,
structure of inverse limit spaces}

\begin{abstract}
We study the structure of inverse limit space of so-called Fibonacci-like tent maps.
The combinatorial constraints implied by the Fibonacci-like assumption allow us to
introduce certain chains that enable a more detailed analysis of symmetric arcs within
this space than is possible in the general case. We show that link-symmetric arcs are
always symmetric or a well-understood concatenation of quasi-symmetric arcs. This leads to
%several extensins and/or
the proof of the Ingram Conjecture for cores of Fibonacci-like unimodal inverse limits.
\end{abstract}

\maketitle

\baselineskip=18pt

\section{Introduction}\label{sec:intro}

A unimodal map is called Fibonacci-like if it satisfies certain combinatorial conditions
implying an extreme recurrence behavior of the critical point.
The Fibonacci unimodal map itself was first described by Hofbauer and Keller \cite{hofkel0} as a candidate
to have a so-called wild attractor.
(The combinatorial property defining the Fibonacci unimodal map is
that its so-called {\em cutting times} are exactly the Fibonacci numbers $1, 2, 3 ,5, 8, \dots$)
In \cite{BKNS} it was indeed shown that Fibonacci unimodal maps with sufficiently large critical
order possess a wild attractor, whereas Lyubich \cite{Lyu} showed that such is not the
case if the critical order is $2$ (or $\leq 2+\eps$ as was shown in \cite{KN}).
This answered a question in Milnor's well-known paper on the structure of metric attracts \cite{Mil}.
In \cite{BTams} the strict Fibonacci combinatorics were relaxed to Fibonacci-like.
Intricate number-theoretic properties of Fibonacci-like critical omega-limit sets were
revealed in \cite{LM} and \cite{BKS}, and \cite[Theorem 2]{BNonlin} shows that
Fibonacci-like combinatorics are incompatible with the Collet-Eckmann condition of exponential derivative
growth along the critical orbit.
This underlines that Fibonacci-like maps are an extremely interesting class of maps in
between the regular and the stochastic unimodal maps in the classification of \cite{ALM}.

One of the reasons for studying the inverse limit spaces of  Fibonacci-like unimodal maps
is that they present a toy model of invertible strange attractors (such as H\'enon attractors)
for which as of today very little is known beyond the Benedicks-Carleson parameters \cite{BC}
resulting in strange attractors with positive unstable Lyapunov exponent.
It is for example unknown if invertible wild attractors exist in the smooth planar context, or
to what extent H\'enon-like attractors satisfy Collet-Eckmann-like growth conditions.
The precise recurrence and folding structure of  H\'enon-like attractors
may be of crucial importance to answer such questions, and we therefore focus on these aspects
of the structure of Fibonacci-like inverse limit spaces.

A second reason for this paper is to provide a better understanding and the solution of the Ingram Conjecture
for cores of Fibonacci-like inverse limit spaces.
The original conjecture was posed by Tom Ingram in 1991 for tent maps $T_s : [0, 1] \to [0, 1]$ with slope
$\pm s$, $s \in [1, 2]$, defined as $T_s(x) = \min\{sx, s(1-x)\}$:
\begin{quote}
If $1 \leq s < s' \leq 2$, then the corresponding inverse limit spaces $\IL$ and $\ILp$ are non-homeomorphic.
\end{quote}
The first results towards solving this conjecture have been obtained for tent maps with a finite critical
orbit \cite{Kail2,Stim,Betal}. Raines and \v{S}timac \cite{RS} extended these results to tent maps with
an infinite, but non-recurrent critical orbit. Recently Ingram's Conjecture was solved for all slopes $s \in [1,2]$ (in
the affirmative) by Barge, Bruin and \v Stimac in \cite{BBS}, but we still know very little of the structure of inverse limit spaces (and
their subcontinua) for the case that $\orb(c)$ is infinite and recurrent, see \cite{BBD, BB, Bsubcontinua}.
Also, the arc-component $\C0$ of $\IL$ containing the endpoint $\bar 0 := (\dots,0,0,0)$ is important in
the proof of the Ingram Conjecture in \cite{BBS},
leaving open the ``core'' version of the
Ingram Conjecture. It is this version that we solve here for Fibonacci-like tent maps:

\begin{maintheorem}\label{mainthm}
If $1 \leq s < s' \leq 2$ are the parameters of Fibonacci-like tent-maps,
then the corresponding cores of inverse limit spaces $\ILcores$ and $\ILcoresp$ are
non-homeomorphic.
\end{maintheorem}

The core version of the Ingram Conjecture was proved already
in the postcritically finite case, since neither Kailhofer \cite{Kail2}, nor \v{S}timac
\cite{Stim} use the arc-component $\C0$, but work on some other arc-components of the core
(although not on the arc-component of the fixed point $(\dots , r, r, r)$
which we use in this paper).

The key observation in our proof is Proposition~\ref{prop:symmetric} which
implies that every homeomorphism $h$ maps symmetric arcs to symmetric arcs,
not just to quasi-symmetric arcs.
(The difficulty that quasi-symmetric arcs pose was first observed and overcome in \cite{RS} in the setting of tent maps with non-recurrent critical point.)
To prove Proposition~\ref{prop:symmetric}, the special structure of the
Fibonacci-like maps, and especially the special chains it allows, is used.
But assuming the result of Proposition~\ref{prop:symmetric},
the proof of the main theorem works for general tent maps.

The paper is organized as follows. In Section~\ref{sec:def} we review the basic definitions of inverse limit
spaces and tent maps and their symbolic dynamics.
In Section~\ref{sec:homeomorphisms} we
introduce salient points, show that any homeomorphism on the core of the Fibonacci-like inverse limit space
maps salient points ``close'' to salient points, and using this
we prove our main theorem in Section~\ref{sec:maintheorems}.
Appendix~\ref{sec:chains} is devoted to the construction of
the chains $\chain$ having special properties that allow us to prove desired properties of folding structure
in Appendix~\ref{sec:furtherlemmas}. In Appendix~\ref{sec:link}, we show that link-symmetric arcs are always
symmetric or a well-understood concatenation of quasi-symmetric arcs.

\section{Preliminaries}\label{sec:def}

\subsection{Combinatorics of tent maps}
The tent map $T_s : [0,1] \to [0,1]$ with slope $\pm s$ is defined as $T_s(x) = \min\{sx, s(1-x)\}$. The
critical or turning point is $c = 1/2$ and we write $c_k = T_s^k(c)$, so in particular $c_1 = s/2$ and
$c_2 = s(1-s/2)$. We will restrict $T_s$ to the interval $I = [0, s/2]$; this is larger than the {\em core}
$[c_2, c_1] = [s-s^2/2, s/2]$, but it contains both fixed points $0$ and $r = \frac{s}{s+1}$.

Recall now some background on the combinatorics of unimodal maps,
see \eg \cite{Bknea}. The {\em cutting times}
$\{S_k\}_{k \geq 0}$ are those iterates $n$ (written in increasing order) for which the central branch
of $T_s^n$ covers $c$. More precisely, let $Z_n \subset [0,c]$ be the maximal interval with boundary
point $c$ on which $T_s^n$ is monotone, and let $\Dlev_n = T_s^n(Z_n)$. Then $n$ is a {\em cutting
time} if $\Dlev_n \ni c$.
Let $\None = \{ 1,2,3,\dots\}$ be the set of natural numbers
and $\N0 = \None \cup \{ 0 \}$.
There is a function $Q : \None \to \N0$ called the {\em kneading map}
such that
\begin{equation}\label{eq:Q}
S_k - S_{k-1} = S_{Q(k)}
\end{equation}
for all $k$. The kneading map $Q(k) = \max \{ k-2, 0 \}$ (with cutting times
$\{ S_k \}_{k \geq 0} = \{ 1,2,3,5,8,\dots\}$) belongs to the {\em Fibonacci map}. We call $T_s$
{\em Fibonacci-like} if its kneading map is eventually non-decreasing,
and satisfies Condition \eqref{eq:cond4} as well:
\begin{equation}\label{eq:cond4}
Q(k+1) > Q(Q(k)+1) \qquad \text{ for all $k$ sufficiently large.}
\end{equation}
\begin{remark}\label{rem:cond4}
Condition \eqref{eq:cond4} follows if the $Q$ is eventually non-decreasing
and $Q(k) \leq k-2$ for $k$ sufficiently large.
(In fact, since tent maps are not renormalizable of arbitrarily high period,
$Q(k) \leq k-2$ for $k$ sufficiently large follows from $Q$ being
eventually non-decreasing, see \cite[Proposition 1]{Bknea}.)
Geometrically, it means that $|c-c_{S_k}| < |c-c_{S_{Q(k)}}|$, see Lemma~\ref{lem:order}
and also \cite{Bknea}.
\end{remark}
\begin{lemma}\label{lem:order}
If the kneading map of $T_s$ satisfies \eqref{eq:cond4}, then
\begin{equation}\label{eq:order}
|c_{S_k} - c| < |c_{S_{Q(k)}} - c| \quad \text{ and } \quad |c_{S_k} - c| < \frac12 |c_{S_{Q^2(k)}} - c|.
\end{equation}
for all $k$ sufficiently large.
\end{lemma}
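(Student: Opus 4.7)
The plan is to reduce both inequalities to a single recursive identity for the closest-return distances $d_j := |c - c_{S_j}|$, and then to exploit Condition \eqref{eq:cond4} to push through an induction. First I would analyze the Hofbauer level interval $\Dlev_{S_k} = T_s^{S_k}(Z_{S_k})$. Since $S_k$ is a cutting time, the maximal monotonicity interval $Z_{S_k}$ of $T_s^{S_k}$ with boundary point $c$ has its other endpoint $z$ satisfying $T_s^{S_{k-1}}(z) = c$ (the unique point of $Z_{S_{k-1}}$ mapped to $c$ by $T_s^{S_{k-1}}$, which exists because $c\in\Dlev_{S_{k-1}}$). Hence $T_s^{S_k}(z) = T_s^{S_{Q(k)}}(c) = c_{S_{Q(k)}}$, so $\Dlev_{S_k}$ is the interval with endpoints $c_{S_k}$ and $c_{S_{Q(k)}}$, with $c$ lying strictly between them. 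Combining $|\Dlev_{S_k}| = s^{S_k}|Z_{S_k}|$ with $|Z_{S_k}| = d_{k-1}/s^{S_{k-1}}$ (obtained from the monotone action of $T_s^{S_{k-1}}$ on $Z_{S_k}$, sending $c\mapsto c_{S_{k-1}}$ and $z\mapsto c$) yields the key identity
\begin{equation}\label{eq:identity-plan}
d_k + d_{Q(k)} = s^{S_{Q(k)}}\, d_{k-1}.
\end{equation}

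Given \eqref{eq:identity-plan}, the first inequality $d_k < d_{Q(k)}$ is equivalent to $s^{S_{Q(k)}} d_{k-1} < 2\, d_{Q(k)}$. I would prove this by induction on $k$: apply \eqref{eq:identity-plan} at index $Q(k)+1$ to re-express $d_{Q(k)}$ in terms of $d_{Q(k)+1}$ and $d_{Q(Q(k)+1)}$, use \eqref{eq:identity-plan} at index $k-1$ with the induction hypothesis $d_{k-2} < d_{Q(k-2)}$ to control $d_{k-1}$, and observe that the resulting comparison reduces to the exponent inequality $S_{Q(k+1)} > S_{Q(Q(k)+1)}$, which is precisely \eqref{eq:cond4} combined with the strict monotonicity of $(S_i)_i$.

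For the stronger inequality $d_k < \frac12\, d_{Q^2(k)}$, a direct double substitution in \eqref{eq:identity-plan} turns out to be circular (it reproduces the target bound verbatim), so a sharper argument is needed. I would iterate the first inequality one level deeper, tracking the ratios $d_k/d_{Q(k)}$ inductively and using the exponent gap of \eqref{eq:cond4} at two successive indices to force these ratios below $\frac12$ for all $k$ sufficiently large. The main obstacle is this quantitative step: because \eqref{eq:cond4} only provides the one-unit gap $Q(k+1) \ge Q(Q(k)+1)+1$, and because the hypotheses on the kneading map $Q$ are very weak (merely eventually non-decreasing), the explicit factor $\frac12$ must be extracted through nested applications of \eqref{eq:identity-plan} at several indices, with each step carefully checked for non-circularity and for the correct propagation of the inductive bound.
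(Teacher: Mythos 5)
Your identity $d_k + d_{Q(k)} = s^{S_{Q(k)}}\,d_{k-1}$ is correct: $|\Dlev_{S_k}| = s^{S_k}|Z_{S_k}|$, the endpoints of $\Dlev_{S_k}$ are $c_{S_k}$ and $c_{S_{Q(k)}}$ with $c$ interior, and $T_s^{S_{k-1}}$ maps $Z_{S_k}=[\zeta_{k-1},c]$ affinely onto $[c,c_{S_{k-1}}]$, giving $|Z_{S_k}| = d_{k-1}/s^{S_{k-1}}$. But the induction you sketch from this identity does not close. For the first inequality you claim that substituting the identity at index $Q(k)+1$ and at $k-1$ ``reduces to'' $S_{Q(k+1)} > S_{Q(Q(k)+1)}$; it does not. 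Inserting $d_{Q(k)} = s^{-S_{Q(Q(k)+1)}}\bigl(d_{Q(k)+1}+d_{Q(Q(k)+1)}\bigr)$ into the target $s^{S_{Q(k)}}d_{k-1} < 2d_{Q(k)}$ yields $s^{S_{Q(k)+1}}d_{k-1} < 2\bigl(d_{Q(k)+1}+d_{Q(Q(k)+1)}\bigr)$, and in the Fibonacci case ($Q(k)=k-2$) this is verbatim the statement you started from, so the substitution is literally circular; in general it trades one instance of the claim for another at nearby indices without introducing any leverage from \eqref{eq:cond4}. The distances $d_j$ are not determined by the exponents $S_j$ alone (they depend on the full orbit of $c$), so a bound cannot be extracted from the scaling factor $s^{S_{Q(k)}}$ by itself. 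For the second inequality you already acknowledge the obstruction and do not resolve it.

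The paper's proof sidesteps the recursion entirely by working with the \emph{closest precritical points} $\zeta_j,\hat\zeta_j$ and their order structure. The two inputs are: (i) since $S_{k+1}$ is the first cutting time after $S_k$, the precritical point of lowest order in $(c,c_{S_k})$ is $\hat\zeta_{Q(k+1)}$ (resp. $\zeta_{Q(k+1)}$), which traps $c_{S_k}$ in the annulus between $\hat\zeta_{Q(k+1)}$ and $\hat\zeta_{Q(k+1)-1}$; (ii) the index comparison from \eqref{eq:cond4}, namely $Q(k+1) > Q(Q(k)+1)$, nests this annulus strictly inside the one containing $c_{S_{Q(k)}}$, giving $|c_{S_k}-c| < |c_{S_{Q(k)}}-c|$ without any quantitative recursion. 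The halving $|\zeta_k - c| < \tfrac12|\zeta_{k-1}-c|$ is then a \emph{corollary} of this first inequality (it is equivalent to it, as you can see from your own identity), and applying \eqref{eq:cond4} twice yields $Q(k+1) > Q(Q^2(k)+1)+1$, so $c_{S_k}$ lies two closest-precritical-point levels deeper than $c_{S_{Q^2(k)}}$ and the factor $\tfrac12$ drops out. You should adopt this order-and-nesting argument; the algebraic identity, while correct, is not by itself enough to drive the induction.
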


\begin{proof}
For each cutting time $S_k$, let $\zeta_k \in Z_{S_k}$ be the point such that
$T_s^{S_k}(\zeta_k) = c$. Then $\zeta_k$ together with its symmetric image
$\hat \zeta_k := 1-\zeta_k$
are closest precritical points in the sense that $T_s^j((\zeta_k,c)) \not\ni c$
for $0 \le j \le S_k$.
Consider the points $\zeta_{k-1}$, $\zeta_k$ and $c$, and their images under $T_s^{S_k}$, see
Figure~\ref{fig:order}.
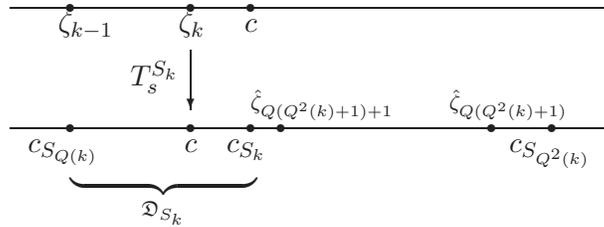
\begin{figure}[ht]
\unitlength=8mm
\begin{picture}(10,3.6)(0,0)
\put(0,1){\line(1,0){10}}
\put(1, 1){\circle*{0.15}}\put(0.3, 0.6){\small $c_{S_{Q(k)}}$}
\put(3, 1){\circle*{0.15}}\put(2.9, 0.6){\small $c$}
\put(4, 1){\circle*{0.15}}\put(3.6, 0.6){\small $c_{S_k}$}
\put(9, 1){\circle*{0.15}}\put(8.3, 0.6){\small $c_{S_{Q^2(k)}}$}
\put(4.5, 1){\circle*{0.15}}\put(4, 1.3){\tiny $\hat \zeta_{Q(Q^2(k)+1)+1}$}
\put(8, 1){\circle*{0.15}}\put(7.3, 1.3){\tiny $\hat \zeta_{Q(Q^2(k)+1)}$}
%\put(5,1.7){\vector(0,-1){0.5}}
\put(1, 0.2){$\underbrace{\qquad\qquad\qquad}_{\Dlev_{S_k}}$}
\put(3,2.3){\vector(0,-1){1}} \put(2, 1.7){\small $T_s^{S_k}$}
\put(0,3){\line(1,0){10}}
\put(1,3){\circle*{0.15}}\put(0.8, 2.6){\small $\zeta_{k-1}$}
\put(3,3){\circle*{0.15}}\put(2.8, 2.6){\small $\zeta_k$}
\put(4,3){\circle*{0.15}}\put(3.9, 2.6){\small $c$}
\end{picture}
\caption{The points $\zeta_{k-1}$, $\zeta_k$ and $c$, and their images under $T_s^{S_k}$.}
\label{fig:order}
\end{figure}
Note that $Z_{S_k} = [\zeta_{k-1},c]$ and $T_s^{S_k}([\zeta_{k-1},c]) = \Dlev_{S_k} =
[c_{S_{Q(k)}}, c_{S_k}]$.
Since $S_{k+1} = S_k + S_{Q(k+1)}$ is the first cutting time after $S_k$,
the precritical point of lowest order on $[c,c_{S_k}]$ is
$\zeta_{Q(k+1)}$ or its symmetric image $\hat \zeta_{Q(k+1)}$.
Applying this to $c_{S_k}$ and $c_{Q(k)}$, and using \eqref{eq:cond4},
we find
$$
c_{S_k} \subset (\zeta_{Q(k+1)-1}, \hat \zeta_{Q(k+1)-1}) \subset
(\zeta_{Q(Q(k)+1)}, \hat \zeta_{Q(Q(k)+1)}) \subset (c_{S_{Q(k)}}, \hat c_{S_{Q(k)}}).
$$
Therefore $|c_{S_k} - c| < |c_{S_{Q(k)}} - c|$.
Since $T_s^{S_k}|_{[\zeta_{k-1},c]}$ is affine,
also the preimages $\zeta_{k-1}$ and $\zeta_k$ of
$c_{S_{Q(k)}}$ and $c$ satisfy $|\zeta_k - c| < |\zeta_{k-1} - \zeta_k|$.
Applying \eqref{eq:cond4} twice we obtain
\begin{equation}\label{eq:cond3}
Q(k+1) > Q(Q^2(k)+1)+1,
\end{equation}
for all $k$ sufficiently large.
Therefore there are at least two closest precritical points
($\hat \zeta_{Q(Q^2(k)+1)}$ and $\hat \zeta_{Q(Q^2(k)+1)+1}$ in Figure~\ref{fig:order})
between $c_{S_k}$ and $c_{S_{Q^2(k)}}$.
Therefore
\begin{equation}\label{eq:cond4twice}
|c_{S_k} - c| < |\hat \zeta_{Q(Q^2(k)+1)+1} - c|< \frac12 |\hat \zeta_{Q(Q^2(k)+1)} - c| < \frac12 |c_{S_{Q^2(k)}} - c|,
\end{equation}
proving the lemma.
\end{proof}
Not all maps $Q: \None \to \N0$ nor all sequences of cutting times
(as defined in \eqref{eq:Q}) correspond to a unimodal map.
As was shown by Hofbauer \cite{Hof1}, a kneading map $Q$ belongs to a unimodal map
(with infinitely many cutting times)
if and only if
\begin{equation}\label{eq:Hofbauer}
\{ Q(k+j) \}_{j \geq 1} \geq_{lex} \{ Q(Q^2(k)+j) \}_{j \geq 1}
\end{equation}
for all $k \geq 1$, where $\geq_{lex}$ indicates lexicographical order.
Clearly, Condition \eqref{eq:cond4} is compatible with
(and for large $k$ implies) Condition~\eqref{eq:Hofbauer}.

\begin{remark}\label{rem:Hofbauer}
The condition $\{ Q(k+j) \}_{j \geq 1} \geq_{lex} \{ Q(l+j) \}_{j \geq 1}$
is equivalent to $|c-c_{S_k}| < |c-c_{S_l}|$.
Therefore, because $c_{S_{k-1}} \in (\zeta_{Q(k)-1}, \zeta_{Q(k)})$,
we find by taking the $T_s^{S_{Q(k)}}$-images, that
$c_{S_k} \in [c_{S_{Q^2(k)}}, c]$ and \eqref{eq:Hofbauer} follows.
The other direction, namely that  \eqref{eq:Hofbauer} is sufficient for admissibility is much more involved, see \cite{Hof1,Bknea}.
\end{remark}

Let $\beta(n) = n - \sup\{ S_k < n\}$ for $n \geq 2$ and find recursively
the images of the central branch of $T_s^n$
(the levels in the Hofbauer tower, see \eg \cite{Bknea, BBbook})
as
\[
\Dlev_1 = [0,c_1] \text{ and } \Dlev_n = [c_n , c_{\beta(n)}].
\]
It is not hard to see that $\Dlev_n \subset \Dlev_{\beta(n)}$ for each $n$,
see \cite{Bknea},
and that if $J \subset [0,s/2]$ is a maximal interval
on which $T_s^n$ is monotone, then $T_s^n(J) = \Dlev_m$ for some $m \le n$.

The condition that $Q(k) \to \infty$ has consequence on the structure
of the critical orbit:

\begin{lemma}\label{lem:Qinfty}
If $Q(k) \to \infty$, then $|\Dlev_n| \to 0$ as $n \to \infty$,
$c$ is recurrent and $\omega(c)$ is a minimal Cantor set.
\end{lemma}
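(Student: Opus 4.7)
The plan is to establish $|\Dlev_n|\to 0$ first, from which recurrence of $c$ and the minimal Cantor structure of $\omega(c)$ follow by standard Hofbauer-tower arguments.

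\emph{Shrinking along cutting times.} The starting point is that between $S_k$ and $S_{k+1}=S_k+S_{Q(k+1)}$ there are no cutting times, which forces the forward orbit of $c_{S_k}$ to first return to $c$ only after exactly $S_{Q(k+1)}$ iterates; in other words $c_{S_k}\in\{\zeta_{Q(k+1)},\hat\zeta_{Q(k+1)}\}$. Since $T_s^{S_j}$ is affine on $Z_{S_j}$ with slope $\pm s^{S_j}$,
$$ |c-\zeta_j| \;=\; \frac{|c-c_{S_j}|}{s^{S_j}} \;\longrightarrow\; 0 \quad\text{as } j\to\infty, $$
because $s>1$ and $|c-c_{S_j}|$ stays bounded. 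Combining gives $|c-c_{S_k}|=|c-\zeta_{Q(k+1)}|\to 0$ provided $Q(k+1)\to\infty$, and hence $|\Dlev_{S_k}|=|c_{S_k}-c_{S_{Q(k)}}|\to 0$.

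\emph{Extension to arbitrary $n$.} Write $n=S_m+r$ with $S_m=\sup\{S_k<n\}$ and $0<r<S_{Q(m+1)}$; then $\beta(n)=r$. Because $r$ is strictly smaller than the gap $S_{Q(m+1)}$ to the next cutting time, $T_s^r$ is monotone on the short interval $[c,c_{S_m}]$ with slope $\pm s^r$, so $\Dlev_n=T_s^r([c,c_{S_m}])$ and
$$ |\Dlev_n| \;=\; s^r\,|c-c_{S_m}| \;=\; s^{r-S_{Q(m+1)}}\,|c-c_{S_{Q(m+1)}}| \;\le\; s^{-1}\,|c-c_{S_{Q(m+1)}}|, $$
which tends to $0$ as $n\to\infty$, since then $m\to\infty$ and hence $Q(m+1)\to\infty$.

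\emph{Recurrence and minimality.} Recurrence is immediate: $c_{S_k}\to c$ with $c_{S_k}\ne c$, so $c\in\omega(c)$. For $\omega(c)$ to be a minimal Cantor set I would invoke the classical Hofbauer-tower argument: the shrinking levels $\Dlev_{S_k}$ (together with their $T_s$-preimages) supply a fundamental system of neighbourhoods of every point of $\omega(c)$ with diameters tending to $0$, so $\omega(c)$ is perfect, compact and totally disconnected; meanwhile, the combinatorial rigidity of the single kneading map $Q$ yields a uniform first-return structure to each level, forcing every orbit in $\omega(c)$ to be dense in $\omega(c)$. The substantive computation is the first step; the subtlest point is verifying monotonicity of $T_s^r$ on $[c,c_{S_m}]$ in the second step, which ultimately reflects Hofbauer's admissibility condition~\eqref{eq:Hofbauer}.
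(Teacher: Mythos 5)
The paper gives no proof of its own here: it simply cites Lemma~2.1 of \cite{Btree}. So I will comment on your argument on its own terms.

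Your strategy for $|\Dlev_n|\to 0$ is the right one — control $|c-c_{S_k}|$ through closest precritical points, then bound the non-cutting levels by the cutting ones — but two claims in the ``shrinking along cutting times'' paragraph are stated incorrectly. First, the identity $c_{S_k}\in\{\zeta_{Q(k+1)},\hat\zeta_{Q(k+1)}\}$ is false: if $c_{S_k}$ were a closest precritical point then $c_{S_{k+1}}=c$, i.e.\ $c$ would be preperiodic, which is precisely the case excluded when $Q(k)\to\infty$. The correct statement, and all you need, is the two-sided containment
$\hat\zeta_{Q(k+1)} \in (c,c_{S_k})$ and $c_{S_k}\in (\hat\zeta_{Q(k+1)},\hat\zeta_{Q(k+1)-1}]$ (or the mirror image), so that $|c-c_{S_k}| \le |c - \zeta_{Q(k+1)-1}| \to 0$. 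Second, in the ``extension to arbitrary $n$'' step the claimed identity $s^{S_{Q(m+1)}}|c-c_{S_m}| = |c-c_{S_{Q(m+1)}}|$ is not right; the affine map $T_s^{S_{Q(m+1)}}$ sends $[c,c_{S_m}]$ onto the full level $\Dlev_{S_{m+1}} = [c_{S_{m+1}},c_{S_{Q(m+1)}}]$, which is strictly longer than $[c,c_{S_{Q(m+1)}}]$. The corrected and simpler estimate is $|\Dlev_n| = s^{r}|c-c_{S_m}| < s^{S_{Q(m+1)}}|c-c_{S_m}| = |\Dlev_{S_{m+1}}|$, which still tends to $0$; so this is a reparable slip rather than a structural flaw.

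The recurrence of $c$ indeed follows immediately once $|c-c_{S_k}|\to 0$. The gap is in the minimality of $\omega(c)$. That step is the substantive content of the cited lemma, and your paragraph does not actually prove it: saying that the levels provide small neighbourhoods shows $\omega(c)$ is small in diameter near the critical orbit, but you also need that \emph{every} $x\in\omega(c)$ has $c$ in its own $\omega$-limit set, which requires an argument showing that the orbit of an arbitrary $x\in\omega(c)$ re-enters the nest $(\zeta_k,\hat\zeta_k)$ for every $k$. This uses the combinatorics of $Q$ (that re-entries to the Hofbauer tower near the top can only occur through cutting levels, and $Q(k)\to\infty$ forces such re-entries along any orbit in $\omega(c)$), and is not a formal consequence of $|\Dlev_n|\to 0$ alone. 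Citing ``the classical Hofbauer-tower argument'' without spelling this out leaves the hardest part of the lemma unproved. Similarly, perfectness of $\omega(c)$ is usually deduced \emph{from} minimality plus infiniteness, so your sketch has the logical dependence backwards. In short: the $|\Dlev_n|\to 0$ portion is essentially correct after fixing the two mis-stated identities, but for minimality you should either supply the return-time argument or defer to \cite{Btree} as the paper does.
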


\begin{proof}
See \cite[Lemma 2.1.]{Btree}.  %More precise than \cite{Bknea}.
\end{proof}

\subsection{Definitions for inverse limit spaces}
The inverse limit space $K_s = \IL$ is the collection of all backward orbits
\[
\{ x = (\dots, x_{-2}, x_{-1}, x_0) :  T_s(x_{i-1}) = x_i \in [0,s/2] \text{ for all } i \leq 0\},
\]
equipped with metric
$d(x,y) = \sum_{n \le 0} 2^n |x_n - y_n|$ and {\em induced} $($or {\em shift$)$ homeo\-morphism}
\[
\sigma(\dots, x_{-2}, x_{-1}, x_0) = (\dots, x_{-2}, x_{-1}, x_0, T_s(x_0)).
\]
Let $\pi_k : \IL \to [0,s/2]$, $\pi_k(x) = x_{-k}$ be the $k$-th projection map.
The \emph{arc-component} of
$x \in X$ is defined as the union of all arcs of $X$ containing $x$. Since $0 \in [0,s/2]$, the endpoint
$\bar 0 = (\dots, 0,0,0)$ is contained in  $\IL$, and the arc-component of $\IL$ of $\bar 0$ will be denoted as
$\C0$; it is a ray converging to, but disjoint from the core $\underleftarrow\lim([c_2, c_1],T_s)$ of the
inverse limit space. Since $r \in [c_2, c_1]$, the point $\rho = (\dots, r, r, r)$ is contained in
$\underleftarrow\lim([c_2, c_1],T_s)$. The arc-component of $\rho$ will be denoted as $\Rr$; it is a
continuous image of $\R$ and is dense in $\underleftarrow\lim([c_2, c_1],T_s)$ in both directions.

We fix $s \in (\sqrt{2}, 2]$; for these parameters $T_s$ is not renormalizable and
$\underleftarrow\lim([c_2, c_1],T_s)$ is indecomposable.

A point $x = (\dots, x_{-2}, x_{-1}, x_0) \in K_s$ is called a {\em $p$-point} if $x_{-p-l} = c$ for some
$l \in \N0$. The number $L_p(x) := l$ is called the {\em $p$-level} of $x$. In particular,
$x_0 = T_s^{p + l}(c)$. By convention, the endpoint $\bar 0 = (\dots, 0,0,0)$ of $\C0$ and the point
$\rho = (\dots, r, r, r)$ of $\Rr$ are also $p$-points and $L_p(\bar 0) = L_p(\rho) := \infty$, for every $p$.

The {\em folding pattern of the arc-component} $\C0$, denoted by $FP(\C0)$, is the sequence
$$
L_p(z^0), L_p(z^1), L_p(z^2), \dots , L_p(z^n), \dots ,
$$
where
$E_p^{\C0} = \{ z^0, z^1, z^2, \dots , z^n, \dots \}$ is the ordered set of all $p$-points of $\C0$ with
$z^0 = \bar 0$, and $p$ is any nonnegative integer. Let $q \in \None$, $q > p$, and
$E_q^{\C0} = \{ y^0, y^1, y^2, \dots , y^n, \dots \}$. Since $\sigma^{q-p}$ is an
order-preserving homeomorphism of $\C0$, it is easy to see that $\sigma^{q-p}(z^i) = y^i$ and
$L_p(z^i) = L_q(y^i)$ for every $i \in \None$. Therefore the folding pattern of $\C0$ does not depend on $p$.

The {\em folding pattern of the arc-component} $\Rr$, denoted by $FP(\Rr)$, is the sequence
\begin{equation}\label{eq:Rr}
\dots , L_p(z^{-n}), \dots , L_p(z^{-1}), L_p(z^0), L_p(z^1), \dots , L_p(z^n), \dots ,
\end{equation}
where
$E_p^{\Rr} = \{ \dots , z^{-n}, \dots , z^{-1}, z^0, z^1, \dots , z^n, \dots \}$ is the ordered set (indexed by $\Z$) of all
$p$-points of $\Rr$ with $z^0 = \rho$, and $p$ is any nonnegative integer. Since $r > 1/2$, we have
$\pi_i(\rho) > 1/2$ for every $i \in \N0$. It is easy to see that for every $i \in \N0$, there exists an arc
$A = A(i) \subset \Rr$ containing $\rho$ such that $\pi_i(A) = [c, c_1]$. Therefore two neighboring $p$-points
of $\rho$ have $p$-levels 0 and 1. From now on we assume, without loss of generality, that the ordering on
$\Rr$, \ie the parametrization of $\Rr$, is such that $L_p(z^{-1}) = 0$ and $L_p(z^1) = 1$. Let
$q \in \None$, $q > p$, and $E_q^{\Rr} = \{ \dots , y^{-n}, \dots , y^{-1}, y^0, y^1, \dots , y^n, \dots \}$
with $y^0 = \rho$.
Since $\sigma^{q-p}$ is an order-preserving (respectively, order-reversing) homeomorphism of $\Rr$ if $q-p$ is
even (respectively, odd), $\sigma^{q-p}(z^i) = y^i$ and $L_p(z^i) = L_q(y^i)$
%(respectively, $\sigma^{q-p}(x^i) = y^{-i}$ and $L_p(x^i) = L_q(y^{-i})$)
for every $i \in \Z$. Therefore the folding pattern of $\Rr$ does not depend on $p$.
%(up to the orientation of ordering).

Note that every arc of $\C0$ and of $\Rr$ has only finitely many $p$-points, but an arc $A$ of the core of
$K_s$ can have infinitely many $p$-points.

We will mostly be interested in the arc-component $\Rr$, but also in some other arc-components 'topologically
similar' to $\Rr$. Therefore, unless stated otherwise, let
$\A \subset \underleftarrow\lim([c_2, c_1],T_s)$ denote an arc-component which does not contain any end-point,
such that every arc $A \subset \A$ contains finitely many $p$-points, and let $\A$ be dense in the core of
$K_s$ in both directions. Let $E_p^{\A} = (a^i)_{i \in \Z}$ denote the set of all $p$-points of $\A$,
where $a^0 = (\dots, a^0_{-2}, a^0_{-1}, a^0_0) \in \A$ is the only $p$-point of $\A$ with $a^0_{-j} \ne c$
for every $j \in \N0$, and let by convention $L_p(a^0) = \infty$ for every $p$. Also, we abbreviate
$E_p := E_p^{\A}$. The {\em $p$-folding pattern of the arc-component} $\A$, denoted by $FP_p(\A)$, is the
sequence $$\dots , L_p(a^{-n}), \dots , L_p(a^{-1}), L_p(a^0), L_p(a^1), \dots , L_p(a^n), \dots .$$ Given an
arc $A \subset \A$ with successive $p$-points $x^0, \dots , x^n$, the {\em $p$-folding pattern} of $A$ is the
sequence
$$
FP_p(A) := L_p(x^0), \dots , L_p(x^n).
$$

An arc $A$ in $\IL$ is said to {\em $p$-turn at $c_n$} if there is a $p$-point $a \in A$ such that
$a_{-(p+n)} = c$, so $L_p(a) = n$. This implies that  $\pi_p:A \to [0,s/2]$ achieves $c_n$ as a local
extremum at $a$. If $x$ and $y$ are two adjacent $p$-points on the same arc-component, then
$\pi_p([x,y]) = \Dlev_n$ for some $n$, so $\pi_p(x) = c_n$ and $\pi_p(y) = c_{\beta(n)}$ or vice versa.
Let us call $x$ and $y$ (or $\pi_p(x)$ and $\pi_p(y)$) {\em $\beta$-neighbors} in this case. Notice,
however, that there may be many post-critical points between $\pi_p(x)$ and $\pi_p(y)$. Obviously, every
$p$-point of $\C0$ and $\Rr$ has exactly two $\beta$-neighbors, except the endpoint $\bar 0$ of
$\C0$ whose $\beta$-neighbor (w.r.t.\ $p$) is by convention the first proper $p$-point in $\C0$, necessarily
with $p$-level $1$.

\subsection{Chainability and (quasi-)symmetry}

A space is {\em chainable} if there are finite open covers $\chain = \{ \ell_i\}_{i = 1}^N$,
called {\em chains}, of arbitrarily small {\em mesh} $(\mesh \chain = \max_i \diam \ell_i)$ with the
property that the {\em links} $\ell_i$ satisfy $\ell_i \cap \ell_j \neq \emptyset$ if and only if
$|i-j| \le 1$. The combinatorial properties of Fibonacci-like maps allow us to construct chains $\chain_p$
such that whenever an arc $A$ $p$-turns in $\ell \in \chain_p$, \ie enters and exits $\ell$ through the same
neighboring link, then the projections $\pi_p(x) = \pi_p(y)$ of the first and last $p$-point $x$ and $y$
of $A \cap \ell$ depend only on $\ell$ and not on $A$, see Proposition~\ref{prop:chains}.
We will work with the chains $\chain_p$ which are
the $\pi_p^{-1}$ images of chains of the interval $[0,s/2]$.

\begin{defi}\label{def:sym}
An arc $A \subset \A$ such that $\partial A = \{ u, v \}$ and $A \cap E_p = \{ x^0, \dots , x^{n} \}$ is
called \emph{$p$-symmetric}\, if $\pi_p(u) = \pi_p(v)$ and $L_p(x^i) = L_p(x^{n-i})$, for every $0 \le i \le n$.
\end{defi}
\noindent It is easy to see that if $A$ is $p$-symmetric, then $n$ is even and
$L_p(x^{n/2}) = \max \{ L_p(x^i) : x^i \in A \cap E_p \}$. The point $x^{n/2}$ is called the
{\em midpoint} of $A$.

It frequently happens that $\pi_p(u) \ne \pi_p(v)$, but  $u$ and $v$ belong to the same link
$\ell \in \chain_p$. Let us call the arc-components $A_u$, $A_v$ of $\A \cap \ell$ that contain $u$ and $v$
respectively the {\em link-tips} of $A$, see Figure~\ref{fig:link-tips}. Sometimes we can make $A$ $p$-symmetric
by removing the link-tips. Let us denote this as $A \setminus \ell\mbox{-tips}$. Adding the closure of the
link-tips can sometimes also produce a $p$-symmetric arc.
\begin{figure}[ht]
\unitlength=8mm
\begin{picture}(8,4)(9,2)
\thicklines
\put(12,3.75){\line(1,0){1}}
\put(12,3.625){\oval(0.25,0.25)[l]}
\put(12,3.5){\line(1,0){4}}
\put(16,3.375){\oval(0.25,0.25)[r]}
\put(10,3.25){\line(1,0){6}}
\put(10,3){\oval(0.5,0.5)[l]}
\put(10,2.75){\line(1,0){6}}
\put(16,2.625){\oval(0.25,0.25)[r]}
\put(12.35,2.5){\line(1,0){3.65}}
\thinlines
\put(14.4, 3.9){$A = [u,v]$}\put(11, 3.8){$\ell$}
\put(14.4,5){\vector(-1,-1){1.2}}\put(13.5, 5.2){link-tips $A_u$ and $A_v$}
\put(14.5,5){\vector(-1,-2){1.35}}
\put(12.5,3){\oval(3,2)}
\put(12.5,3.75){\line(1,0){1.2}}
\put(12,2.5){\line(1,0){0.5}}
\put(12,2.375){\oval(0.25,0.25)[l]}
\put(12,2.25){\line(1,0){1.7}}
\put(12.3, 2.5){\circle*{0.11}}\put(13, 3.75){\circle*{0.11}}
\put(12.03, 2.57){\tiny $v$}\put(12.67, 3.82){\tiny $u$}
\end{picture}
\caption{The arc $A$ is neither $p$-symmetric, nor quasi-$p$-symmetric, but both arcs
$A \setminus \ell\mbox{-tips}$ and $A \cup \Cl (\ell\mbox{-tips})$ are $p$-symmetric.}
\label{fig:link-tips}
\end{figure}
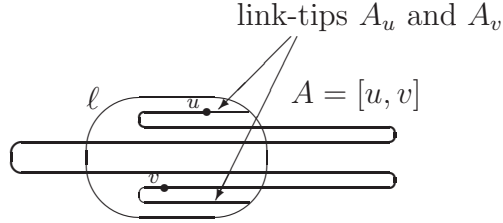

\begin{remark}\label{rem_basic}

{\bf (a)} Let $A$ be an arc and $m \in A$ be a $p$-point of maximal $p$-level, say $L_p(m) = L$. Then $\pi_p$
is one-to-one on both components of $\sigma^{1-L}(A \setminus \{ m \})$, so $m$ is the only $p$-point of
$p$-level $L$. It follows that between every two $p$-points of the same $p$-level, there is a $p$-point $m$ of
higher $p$-level.

{\bf (b)} If $A \owns m$ is the maximal open arc such that $m$ has the highest $p$-level on $A$, then we can
write $\Cl A = [x,y]$ or $[y,x]$ with $L_p(x) > L_p(y)> L_p(m) =: L$, and $\pi_p$ is one-to-one on
$\sigma^{-L}(\Cl A)$. Here $L_p(x) = \infty$ is possible, but if $L_p(x) < \infty$, then
$A' := \pi_p \circ \sigma^{-L}(A)$ is a neighborhood of $c$ with boundary points
$c_{S_k} = \pi_p \circ  \sigma^{-L}(x)$ and $c_{S_l} = \pi_p \circ  \sigma^{-L}(y)$ for some $k,l \in \None$
such that $l = Q(k)$. By Lemma~\ref{lem:order} this means that the arc $[x,m]$ is shorter than $[m,y]$.
\end{remark}

\begin{defi}\label{quasi-p-symmetric}
Let $A$ be an arc of $\A$. We say that the arc $A$ is \emph{quasi-$p$-symmetric with respect to} $\chain_p$ if
\begin{itemize}
\item[(i)] $A$ is not $p$-symmetric;
\item[(ii)] $\bd A$ belongs to a single link $\ell$;
\item[(iii)] $A \setminus \ell\mbox{-tips}$ is $p$-symmetric;
\item[(iv)]  $A \cup \ell\mbox{-tips}$ is not $p$-symmetric.
(So $A$ cannot be extended to a symmetric arc within its boundary link $\ell$.)
\end{itemize}
\end{defi}

\begin{defi}\label{def:linksym}
Let $\ell_0, \ell_1, \dots, \ell_k$ be the links in $\chain_p$ that are successively visited by an arc
$A \subset \A$, and let $A_i \subset \Cl({\ell_i})$ be the corresponding maximal subarcs of $A$.
(Hence $\ell_i \neq \ell_{i+1}$, $\ell_i \cap \ell_{i+1} \neq \emptyset$ but $\ell_i = \ell_{i+2}$ is
possible if $A$ turns in $\ell_{i+1}$.) We call $A$ {\em $p$-link-symmetric} if $\ell_i = \ell_{k-i}$ for
$i = 0, \dots, k$. In this case, we say that $A_i$ is $p$-link-symmetric to $A_{k-i}$.
\end{defi}
\begin{remark}
Every $p$-symmetric and quasi-$p$-symmetric arc is $p$-link-symmetric by definition, but there are
$p$-link-symmetric arcs which are not $p$-symmetric or quasi-$p$-symmetric. This occurs if $A$ turns
both at $A_i$ and $A_{k-i}$, but the midpoint of $A_i$ has a higher $p$-level than the midpoint of
$A_{k-i}$ and $i \notin \{ 0, k \}$. Note that for a $p$-link-symmetric arc $A$, if $U$ and $V$ are
$p$-link-symmetric arc-components which do not contain any boundary point of $A$, then $U$ contains at
least one $p$-point if and only if $V$ contains at least one $p$-point.
\end{remark}

%Obviously, symmetric arcs are quasi-symmetric, but the reverse is not true.
Appendix~\ref{sec:furtherlemmas} is devoted to give a precise description
of quasi-symmetric arcs and their concatenated components.
In Appendix~\ref{sec:link} we use this structure to show that
link-symmetric arcs are always symmetric or a well-understood concatenation of quasi-symmetric arcs.
%self-homeomorphisms of Fibonacci-like inverse limit spaces map quasi-symmetric arcs to quasi-symmetric arcs, and not to symmetric arcs.

\section{Salient Points and Homeomorphisms}\label{sec:homeomorphisms}

Note that in this section all proofs except the proof of Proposition \ref{prop:symmetric} work in general,
only the proof of Proposition~\ref{prop:symmetric} uses the special structure of the Fibonacci-like inverse limit spaces revealed in this paper.

\begin{defi}\label{df:salient}[see \cite[Definition 2.7]{BBS}]
Let $(s_i)_{i \in \None}$ be the sequence of all $p$-points of the arc-component $\C0$ such that
$0 \leq L_p(x) < L_p(s_i)$ for every $p$-point $x \in (\bar 0 , s_i)$. We call $p$-points satisfying this
property \emph{salient}.
\end{defi}

For every slope $s > 1$ and $p \in \N0$, the folding pattern of $\C0$ starts as
$\infty \ 0 \ 1 \ 0 \ 2 \ 0 \ 1 \ \dots$, and since by definition $L_p(s_1) > 0$, we have $L_p(s_1) = 1$.
Also, since $s_i = \sigma^{i-1}(s_1)$, $L_p(s_i) = i$, for every $i \in \None$. Note that the salient
$p$-points depend on $p$: if $p \geq q$, then the salient $p$-point $s_i$ equals the salient $q$-point
$s_{i+p-q}$.

\begin{defi}\label{df:Rsalient}
Recall that $\Rr$ is the arc-component containing the point $\rho = (\dots,r,r,r)$ where
$r = \frac{s}{s+1}$ is fixed by $T_s$.
Let $(t^i)_{i \in \Z} \subset E^{\Rr}_p$ be the bi-infinite sequence of all $p$-points of the arc-component $\Rr$ such that for every $i \in \None$
$$
\left\{ \begin{array}{ll}
t^0 = \rho, & \\
L_p(t^i) > L_p(x) & \text{ for every $p$-point } x \in (\rho, t^i), \\
L_p(t^{-i}) > L_p(x) & \text{ for every $p$-point } x \in (t^{-i}, \rho).
\end{array} \right.
$$
\end{defi}

Note that $p$-points $(t^i)_{i \in \Z} \subset \Rr$ are defined similarly as salient $p$-points
$(s_i)_{i \in \None}$; we call them \emph{$\Rr$-salient} $p$-points, or simply
\emph{salient} $p$-points when it is clear which arc-component they belong to. There is an
important difference between the sets $(s_i)_{i \in \None} \subset \C0$ and $(t^i)_{i \in \Z} \subset \Rr$, namely $L_p(s_i) = i$ for every $i \in \None$, whereas $L_p(t^i) \ne |i|$ for all
$i \in \Z \setminus \{ 1 \}$.

\begin{lemma} \label{lem:PR1}
For $(t^i)_{i \in \Z} \subset \Rr$ we have
$$
L_p(t^i) = \begin{cases}
2i-1 & \text { if } i > 0, \\
-2i & \text { if } i < 0.
\end{cases}
$$
\end{lemma}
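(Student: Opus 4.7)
The strategy is to exploit the shift $\sigma$, which fixes $\rho$ (since $T_s(r)=r$), is order-reversing on $\Rr$ (as $\sigma = \sigma^{q-p}$ with $q-p=1$ odd), and raises every $p$-level by exactly one: if $x_{-p-l}=c$ then $(\sigma x)_{-p-(l+1)}=c$. The base case $L_p(t^1)=1$ is fixed by the chosen parametrization $L_p(z^1)=1$.

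First I would show that $\sigma$ sends each salient $p$-point to a salient on the opposite side of $\rho$. Given a $p$-point $y \in (\sigma(t^i),\rho)$ on the left, split cases: if $L_p(y)=0$ then $L_p(y)<L_p(\sigma(t^i))$ since $L_p(\sigma(t^i))=L_p(t^i)+1\geq 2$; otherwise $y=\sigma(x)$ for a unique $p$-point $x=\sigma^{-1}(y)$, which lies in $(\rho,t^i)$ on the right because $\sigma^{-1}$ is order-reversing and fixes $\rho$, and salience of $t^i$ gives $L_p(y)=L_p(x)+1\leq L_p(t^i)<L_p(\sigma(t^i))$. The symmetric argument shows $\sigma$ restricts to a bijection $\{t^i\}_{i\geq 1} \to \{t^{-j}\}_{j\geq 1}$; since $\sigma$ is order-reversing and fixes $\rho$, while both sequences are indexed in order of increasing distance from $\rho$, this bijection sends the $i$-th element to the $i$-th element: $\sigma(t^i)=t^{-i}$.

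Running the same preservation argument once more, $\sigma$ bijects $\{t^{-j}\}_{j\geq 1}$ with the subsequence of right-salients of level $\geq 3$. Assuming this subsequence equals $\{t^k\}_{k\geq 2}$ (equivalently, that no right-salient has level $2$), order-matching forces $\sigma(t^{-i})=t^{i+1}$. The recurrences $L_p(t^{-i})=L_p(t^i)+1$ and $L_p(t^{i+1})=L_p(t^i)+2$ together with $L_p(t^1)=1$ then yield $L_p(t^i)=2i-1$ and $L_p(t^{-i})=2i$ by induction. The main obstacle is excluding a right-salient of level $2$: via the $\sigma$-bijection this reduces to excluding a left-salient of level $1$. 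This follows from the structure of the arc $A(p+1)\subset\Rr$ with $\pi_{p+1}(A(p+1))=[c,c_1]$, whose left endpoint $z^{-2}$ has $L_p=2$ because both $T_s$-preimages of $c_1$ equal $c$; the first left-$p$-point of level $1$ therefore lies strictly past $z^{-2}$, and the intervening level-$2$ point blocks the salience condition.
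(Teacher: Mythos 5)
Your proof is correct and follows essentially the same approach as the paper's: both exploit that $\sigma$ fixes $\rho$, is order-reversing on $\Rr$, and raises every $p$-level by one, so iterating $\sigma$ spirals outward through the salient points of $\Rr$ in alternation. The one genuine difference is in how the inductive base is pinned down: the paper reads off $L_p(t^{-1})=2$ directly from $\sigma([z^{-1},z^1])=[z^{-2},z^1]$, whereas you obtain it from the arc $A(p+1)$ with $\pi_{p+1}(A(p+1))=[c,c_1]$ and the fact that $T_s^{-1}(c_1)=\{c\}$; these are two routes to the same fact. A small remark on ordering: it is exactly the surjectivity of your first bijection $\sigma\colon\{t^i\}_{i\geq1}\to\{t^{-j}\}_{j\geq1}$ that requires $L_p(t^{-1})\geq 2$ (if $L_p(t^{-1})=1$ then $\sigma^{-1}(t^{-1})$ would be a level-$0$ $p$-point, hence not one of the $t^i$, and $t^{-1}$ would be missed), so the $A(p+1)$ argument that you defer to the end belongs logically before that bijection is asserted.
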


\begin{proof}
Since $r$ is the positive fixed point of $T_s$, the $p$-points closest to $\rho = (\dots, r, r, r)$ have
$p$-levels 0 and 1. Also $\sigma(\rho) = \rho$ implies $\sigma(\Rr) = \Rr$. The parametrization of $\Rr$,
chosen in Section~\ref{sec:def} below \eqref{eq:Rr}, is such that for $\rho \in [x^{-1}, x^1]$ we have
$L_p(x^{-1}) = 0$ and $L_p(x^{1}) = 1$, thus $x^1 = t^1$. Since
$\sigma(\rho) = \rho \in \sigma([x^{-1}, x^1]) \subset \Rr$ and
$\sigma|_{\Rr}$ is order reversing, we have $\sigma(x^{-1}) = x^1$, $\sigma(x^1) \prec x^1$, \ie
$\sigma([x^{-1}, x^1]) = [x^{-2}, x^1]$ with $L_p(x^{-2}) = 2$. Note that $x^{-2} = t^{-1}$. For the same
reason, $\sigma([x^{-2}, x^1]) = [x^{-2}, x^j]$, where $x^j$ is the first $p$-point to the right of
$x^1$ such that $L_p(x^j) = 3$, \ie $x^j = t^2$. The claim of the lemma follows by induction.
\end{proof}

Analogously, we define $\A$-salient $p$-points of an arc-component $\A$ of the core of $K_s$.
\begin{defi}\label{df:Asalient}
Let $(u^i)_{i \in \Z} \subset E^{\A}_p = (a^i)_{i \in \Z}$ be the bi-infinite sequence of all $p$-points of the arc-component $\A$ such that for every $i \in \None$
$$
\left\{ \begin{array}{ll}
u^0 = a^0, & \\
L_p(u^i) > L_p(x) & \text{ for every $p$-point } x \in (u^0, u^i), \\
L_p(u^{-i}) > L_p(x) & \text{ for every $p$-point } x \in (u^{-i}, u^0).
\end{array} \right.
$$
This fixes an orientation on $\A$; the choice of orientation is immaterial, as long as we make one.
\end{defi}

\begin{lemma} \label{lem:PR2}
If there exist $J, J', K \in \N0$ such that for every $j \in \None$, $L_p(u^{J+j}) = 2(K+j)-1$  and
$L_p(u^{-(J'+j)}) = 2(K+j)$, then $\A = \Rr$.
\end{lemma}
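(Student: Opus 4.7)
The plan is to deduce $\A = \Rr$ by exhibiting the $\sigma$-fixed point $\rho$ inside $\A$. The driving observation is that the only fixed points of $\sigma$ on $K_s$ are the endpoint $\bar 0 \in \C0$ and the point $\rho \in \Rr$; hence any $\sigma$-fixed point located inside $\A$ (which lies in the core, so cannot be $\bar 0$) must coincide with $\rho$, forcing $\A = \Rr$.

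Because $\sigma$ increases the $p$-level of any $p$-point by exactly $1$, the hypothesis yields, for every $j \geq 1$,
\[
L_p(\sigma(u^{J+j})) = 2(K+j) = L_p(u^{-(J'+j)}) \quad\text{and}\quad L_p(\sigma(u^{-(J'+j)})) = 2(K+j)+1 = L_p(u^{J+j+1}).
\]
Granted that $\sigma(\A) = \A$ (the main technical point, discussed below), uniqueness of the salient $p$-point of a prescribed $p$-level on each side of the distinguished point $u^0$ then forces $\sigma(u^{J+j}) = u^{-(J'+j)}$ and $\sigma(u^{-(J'+j)}) = u^{J+j+1}$. In particular $\sigma^{-1}(u^{-(J'+j)}) = u^{J+j}$ and $\sigma^{-1}(u^{J+j}) = u^{-(J'+j-1)}$.

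Now define the nested arcs $B_j := [u^{-(J'+j)}, u^{J+j}] \subset \A$ for $j \geq 2$; they satisfy $B_j \subset B_{j+1}$ and $\bigcup_j B_j = \A$. Applying $\sigma^{-1}$ to the endpoints of $B_j$ gives $u^{J+j}$ (an endpoint of $B_j$) and $u^{-(J'+j-1)}$ (an interior salient point of $B_j$, whose $p$-level $2(K+j-1)$ is supplied by the hypothesis at index $j-1 \geq 1$). Since $\A$ is uniquely arc-wise connected, $\sigma^{-1}(B_j)$ coincides with the subarc of $B_j$ between these two images, so $\sigma^{-1}(B_j) \subset B_j$. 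The continuous self-map $\sigma^{-1}|_{B_j} : B_j \to B_j$ of an arc has a fixed point by the one-dimensional Brouwer theorem; by the discussion above this fixed point is $\rho$, so $\rho \in B_j \subset \A$ and $\A = \Rr$.

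The hardest step is the identification $\sigma(\A) = \A$: a priori $\sigma$ only permutes the arc-components of the core, and if the $\sigma$-orbit of $\A$ were a nontrivial cycle then the fixed point produced above would be a lift of a periodic $T_s$-orbit rather than $\rho$. To rule this out one compares the parity pattern of salient $p$-levels on $\A$ and on $\sigma(\A)$: the ``positive odd, negative even'' pattern demanded by the hypothesis on $\A$, after the global $+1$ shift of every $p$-level imposed by $\sigma$, is compatible with the corresponding pattern on $\sigma(\A)$ only when $\sigma$ acts as an orientation-reversing self-homeomorphism of $\A$ swapping the two sides of $u^0$, which is equivalent to $\sigma(\A) = \A$.
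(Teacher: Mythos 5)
Your proof takes a genuinely different route from the paper's: instead of computing the full folding pattern of $\A$ and matching it with $FP_p(\Rr)$ via the bijectivity of the projections $\pi_{p+2(K+j)-1}$ and $\pi_{p+2(K+j)}$ on the salient arcs (which is what the paper does), you try to locate the $\sigma$-fixed point $\rho$ directly inside $\A$ by a Brouwer-type argument on the nested arcs $B_j$. The topological part of your argument -- showing $\sigma^{-1}(B_j)\subset B_j$ once one knows $\sigma(u^{J+j})=u^{-(J'+j)}$ and $\sigma(u^{-(J'+j)})=u^{J+j+1}$, and then locating a $\sigma$-fixed point in the core -- is correct, and is an appealing shortcut. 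But everything hinges on the claim $\sigma(\A)=\A$, and there the argument has a genuine gap.

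The parity argument you give is circular and, in fact, inconclusive. You assert that the shifted parity pattern ``is compatible with the corresponding pattern on $\sigma(\A)$ only when $\sigma$ acts as an orientation-reversing self-homeomorphism of $\A$.'' But you have no information about the folding pattern of $\sigma(\A)$ unless you already know $\sigma(\A)=\A$; the hypothesis of the lemma concerns $\A$ only. In fact a direct computation shows that $\sigma(\A)$, with its orientation chosen to be the reverse of the one pushed forward by $\sigma$, satisfies exactly the same hypothesis. Indeed, if $v^i$ denote the salient $p$-points of $\sigma(\A)$ with $v^i=\sigma(u^{-i})$, then for $j\in\None$,
\begin{align*}
L_p(v^{J'+j}) &= L_p(u^{-(J'+j)})+1 = 2(K+j)+1 = 2\bigl((K+1)+j\bigr)-1,\\
L_p(v^{-(J+1+j)}) &= L_p(u^{J+1+j})+1 = 2(K+1+j) = 2\bigl((K+1)+j\bigr),
\end{align*}
so $\sigma(\A)$ satisfies the hypothesis with $(\tilde J,\tilde J',\tilde K)=(J',\,J+1,\,K+1)$. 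Thus the ``positive odd, negative even'' pattern of salient levels is invariant under $\sigma$ and carries no information that could distinguish $\A$ from $\sigma(\A)$. What the paper actually uses to close this gap is the much finer fact that the projections $\pi_{p+2(K+j)-1}\colon[u^{-(J'+j)},u^{J+j}]\to[c,c_1]$ are bijections, which pins down the \emph{entire} folding pattern $FP_p(\A)$, not just the asymptotic parities of the salient levels; the salient-level parities alone are insufficient, and some replacement for that step is needed before your Brouwer argument can be set in motion.
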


In other words, the asymptotic shape of this folding pattern is unique to $\Rr$.

\begin{proof}
Let $J, J', K \in \N0$ be as in the statement of the lemma. Then for every $j \in \None$ we have:
\begin{itemize}
\item[(i)] $L_p(u^{-(J'+j)}) - L_p(u^{J+j}) = L_p(u^{J+j+1}) - L_p(u^{-(J'+j)}) = 1$,
\item[(ii)]  $L_p(x) < L_p(u^{J+j})$ for every $p$-point $x \in (u^{-(J'+j)}, u^{J+j})$, and
\item[(iii)] $L_p(x) < L_p(u^{-(J'+j)})$ for every $p$-point $x \in (u^{-(J'+j)}, u^{J+j+1})$.
\end{itemize}
Therefore,
$\pi_{p+2(K+j)-1}: [u^{-(J'+j)}, u^{J+j}] \to [c, c_1]$ and
$\pi_{p+2(K+j)}: [u^{-(J'+j)}, u^{J+j+1}] \to [c, c_1]$ are
bijections, implying that for every $j \in \None$, $FP_p([u^{-(J'+j)}, u^{J+j}])$ and
$FP_p([u^{-(J'+j)}, u^{J+j+1}])$ are uniquely determined by $T^{2(K+j)-1}_s$ and $T^{2(K+j)}_s$
respectively. Thus we have the following:
$$
\left\{ \begin{array}{l}
FP_p([u^{-(J'+j)}, u^{J+j}]) = FP_p([t^{-(K+j)}, t^{K+j}]),  \\[2mm]
FP_p([u^{-(J'+j)}, u^{J+j+1}]) = FP_p([t^{-(K+j)}, t^{K+j+1}]),
\end{array} \right.
$$
whence $FP_p([u^{-(J'+j)}, u^{J+j+1}]) = FP_p(\sigma([u^{-(J'+j)}, u^{J+j}]))$ for every $j \in \None$.
It follows that $FP_p(\sigma(\A)) = FP_p(\A) = FP_p(\Rr)$ implying $\A = \Rr$.
\end{proof}

Note that in general $J, J', K$ in the above lemma are not related since $u^0 = a^0$ can be any point, but
there exists a point $a \in \A$ such that for $u^0 = a$, we have $J = J' = K$.

Let $h : \ILcores \to \ILcores$ be a homeomorphism on the core of a (Fibonacci-like) inverse limit space.
Let $q, p, g \in \N0$ be such that $\chain_q$, $\chain_p$ and $\chain_g$ are chains as in
Proposition~\ref{prop:chains}, and such that
$$
h(\chain_{q}) \preceq \chain_{p} \preceq h(\chain_{g}).
$$
It is straightforward that any $q$-link-symmetric arc $A \subset \ILcores$ maps to a $p$-link-symmetric arc
$h(A) \subset \ILcores$.

In Appendix~\ref{sec:chains}, we construct special chains by which we are able to describe the structure of
%quasi-symmetric arcs (see Definition~\ref{quasi-p-symmetric})
link-symmetric arcs (see Definition~\ref{def:linksym})
precisely.
The Fibonacci-like structure, and the extra structure of these chains,
allow us to conclude the stronger statement that $q$-symmetric arcs map to $p$-symmetric arcs.
This is a rather technical undertaking, but let us paraphrase Remark~\ref{rem:quasi-sym} so as to make
this section understandable (although for the fine points we will still refer forward to the appendix).
%Quasi-symmetric
Link-symmetric arcs tend to be composed of smaller {\em (basic) quasi-symmetric
arcs} $A_k$ (see Definition~\ref{basic-quasi-p-symmetric})
that are ordered linearly such that $A_k$ and $A_{k+1}$ overlap, and the midpoint
of $A_{k+1}$ is the endpoint of $A_k$. An entire concatenation of such arcs
is called {\em decreasing quasi-symmetric} (respectively {\em increasing quasi-symmetric},
see Definition~\ref{def:decreasing})
if the levels of the successive midpoints (also called {\em nodes})
- all contained in, alternately, one of two given links - are decreasing (respectively increasing).
The concatenation is called {\em maximal decreasing quasi-symmetric} (respectively {\em maximal
increasing quasi-symmetric}, see Definition~\ref{def:maximaldecreasing}) if it cannot be extended to
a concatenation with more components. The last endpoint (respectively the first endpoint), namely,
of the arc with midpoint of the lowest level, is then no longer a $p$-point.

For a point $x$, we denote a link of $\chain_p$ which contains $x$ by $\ell_p^x$, and the
arc-component of $\ell_p^x$ which contains $x$ by $A_x$.

\begin{defi}\label{def:extended_arc}
Let $x \in E_q^{\A} \subset \A$ be a $q$-point, and let $A_{h(x)} \subset \ell_p^{h(x)}$ be the
arc-component of $\ell_p^{h(x)}$ which contains $h(A_x)$ (and therefore $h(x)$). Let $a, b \in \None$,
$a \le b$, be such that $h(\cup_{i = a}^b \ell_q^i) \subseteq \ell_p^{h(x)}$,
$h(\ell_q^{a-1}) \nsubseteq \ell_p^{h(x)}$ and $h(\ell_q^{b+1}) \nsubseteq \ell_p^{h(x)}$. Let $\hat A_x$ be an
arc-component of $\cup_{i = a}^b \ell_q^i$ such that $h(\hat A_x) \subseteq A_{h(x)} \subset \ell_p^{h(x)}$.
We call $\hat A_x$ the \emph{extended arc-component of the $q$-point} $x$. If a $p$-point $u$ is the midpoint
of $A_{h(x)}$, then we write $u \vdash h(x)$.
\end{defi}

The extended arc-component $\hat A_x$ is obtained by extending $A_x$ so much on both sides that
$h(\hat A_x)$ fits almost exactly in the $p$-link containing $h(A_x)$. Note that the arc-component $A_x$
of a $q$-point $x$ depends on the chain $\chain_q$, while the extended arc-component $\hat A_x$
of the $q$-point $x$ also depends on the chain $\chain_p$. But we still can define its midpoint as the
$q$-point $z \in \hat A_x$ such that $L_q(z) \ge L_q(y)$ for every $q$-point $y \in \hat A_x = \hat A_z$.
If a $q$-point $x$ is the midpoint of its extended arc-component $\hat A_x$ we call it a \emph{$q_p$-point}.

\begin{proposition}\label{prop:symmetric}
Let $x, y \in E_q^{\A} \subset \A$ be $q_p$-points and let $u \vdash h(x)$ and $v \vdash h(y)$.
Then $L_q(x) = L_q(y)$ implies $L_p(u) = L_p(v)$.
\end{proposition}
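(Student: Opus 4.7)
The plan is to show that $\hat A_x$ and $\hat A_y$ have identical $q$-folding patterns (read outward from their midpoints), then transport this via $h$ using the refinement $h(\chain_q) \preceq \chain_p$, and finally read $L_p(u)$ and $L_p(v)$ off the common resulting $p$-folding structure.

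First I would pin down the shape of $\hat A_x$. Being the extended arc-component of the $q_p$-point $x$, the arc $\hat A_x$ has its two endpoints in the two $q$-links that bracket $\ell_p^{h(x)}$ under $h$, and $x$ is its point of highest $q$-level in $\hat A_x$, so $\hat A_x$ is $q$-link-symmetric about $x$. By the classification of Appendix~\ref{sec:link}, such an arc is either $q$-symmetric or a maximal concatenation of basic quasi-$q$-symmetric arcs of the type described in Appendix~\ref{sec:furtherlemmas}. The Fibonacci-like hypothesis, through Lemma~\ref{lem:order} and the resulting ordering of $\beta$-neighbors of $c_L$ with $L = L_q(x)$, pins down both the combinatorial type and the length of this concatenation in terms of $L$ alone. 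Consequently $\hat A_x$ and $\hat A_y$ carry identical $q$-folding patterns.

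Next I would apply $h$. The refinement $h(\chain_q) \preceq \chain_p$ implies that $h(\hat A_x)$ and $h(\hat A_y)$ are $p$-link-symmetric in $\chain_p$ with the same link-pattern, and by the defining maximality of $\hat A_x$ they fill $A_{h(x)}$ and $A_{h(y)}$ up to at most the two boundary $p$-links. Proposition~\ref{prop:chains}, which states that the entry and exit $p$-points of an arc turning in a link depend only on the link, then forces the $p$-folding patterns of $A_{h(x)}$ and $A_{h(y)}$ to coincide. Since the midpoint of each such link-symmetric arc-component is the unique $p$-point of maximal $p$-level (Remark~\ref{rem_basic}(a)), it follows that $L_p(u) = L_p(v)$.

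The hardest part is the first step: showing that the (quasi-)symmetric concatenation making up $\hat A_x$ is determined by $L$ alone, independently of where $x$ sits inside $\ell_p^{h(x)}$. This is where condition \eqref{eq:cond4} is essential. The strict nesting $|c_{S_k} - c| < \frac{1}{2}|c_{S_{Q^2(k)}} - c|$ from Lemma~\ref{lem:order} propagates to the boundary $q$-links of $\hat A_x$ and controls where a maximal decreasing quasi-symmetric concatenation must terminate; without it, a longer asymmetric tail could push the midpoint of $A_{h(x)}$ to a different $p$-level than that of $A_{h(y)}$, whereas with it any two $q_p$-points of the same $q$-level must yield the same combinatorial type on the $p$-side.
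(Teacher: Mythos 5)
There is a genuine gap in the first step. Being a $q_p$-point only means that $x$ is the $q$-point of highest $q$-level in $\hat A_x$; it does not make $\hat A_x$ $q$-link-symmetric. The extended arc-component $\hat A_x$ is the whole arc-component of $\A$ inside the union $\bigcup_{i=a}^b \ell_q^i$, and this union is chosen so that its $h$-image fits in $\ell_p^{h(x)}$ — nothing here forces $\hat A_x$ to turn and retrace its links. More importantly, the core assertion that follows — that ``the combinatorial type and the length of this concatenation [are determined] in terms of $L$ alone, independently of where $x$ sits inside $\ell_p^{h(x)}$'' — is exactly the content that needs to be proved, and Lemma~\ref{lem:order} by itself does not give it. The range of $q$-links spanned by $\hat A_x$ depends on the $p$-link containing $h(x)$, and $h(x)$ and $h(y)$ need not lie in the same $p$-link even when $L_q(x) = L_q(y)$. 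Your deduction then transports an ``identical $q$-folding pattern'' that you have not established to conclude identical $p$-folding patterns for $A_{h(x)}$ and $A_{h(y)}$ — but these arc-components may live in distinct $p$-links, so Proposition~\ref{prop:chains}'s entry/exit uniqueness, which is per link, cannot force their folding patterns to coincide. If your first step were available as stated, the delicate appendices on quasi-$p$-symmetric arcs and the contradiction machinery of the paper would be unnecessary, which is a symptom that something has been assumed rather than proved.

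The paper does something quite different. It argues by contradiction: set $A = [x,y]$ $q$-symmetric with midpoint $m$, assume $L_p(u) \neq L_p(v)$, so $D = [u,v]$ is $p$-link-symmetric but not $p$-symmetric. By Proposition~\ref{thm:linksym} and Remark~\ref{rem:quasi-sym}, $D$ then sits inside an extended maximal increasing/decreasing (basic) quasi-$p$-symmetric arc, or a concatenation of two such. The contradiction is reached by introducing a third chain $\chain_g$ with $h(\chain_q) \preceq \chain_p \preceq h(\chain_g)$, pulling back to $\chain_g$ via $\sigma^{q-g} \circ h^{-1}$ to locate a maximal $g$-symmetric arc $K$, and then exploiting the fact that the $q$-symmetric arc $M = \sigma^{-q+g}(K) \supset A$ forces a pair of arc-components $h(N')$ and $h(N)$ to both contain or both not contain $p$-points — contradicting the structure of the boundary link-tips of the quasi-symmetric arc $G$. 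That three-chain pullback is what substitutes for the ``intrinsic determination of the folding pattern'' your proposal tries to invoke directly.
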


Since the endpoints of a symmetric arc have the same level, and
%$p$-link symmetric arcs are mapped to $q$-link-symmetric arcs
$q$-link symmetric arcs are mapped to $p$-link-symmetric arcs
by a homeomorphism $h$, Proposition~\ref{prop:symmetric} implies that $h$
maps symmetric arcs to symmetric arcs.

\begin{proof}
Without loss of generality we suppose that between $x$ and $y$, there are no $q$-points with $q$-level
$L_q(x)$. Then the arc $A = [x, y]$ is $q$-symmetric. The midpoint $m$
of $A$ is a $q_p$-point. Let $w \vdash h(m)$.

Let us assume by contradiction that $L_p(u) \ne L_p(v)$. Then $D = [u, v]$ is not $p$-symmetric with
midpoint $w$. Since $A$ is $q$-symmetric, $D$ is $p$-link symmetric. By Proposition~\ref{thm:linksym} and
Remark~\ref{rem:quasi-sym}, $D$ is contained either in an extended maximal decreasing/increasing (basic)
quasi-$p$-symmetric arc, or in a $p$-symmetric arc which is concatenation of two arcs, one of which is
a maximal increasing (basic) quasi-$p$-symmetric arc, and the other one is a maximal decreasing (basic)
quasi-$p$-symmetric arc.

{\bf (1)} Let us assume that $D$ is contained in an extended maximal increasing (basic) quasi-$p$-symmetric arc $G$.
Let $B'$ and $B$ be the link-tips of $G$, so $G = [B', B]$. Then, by Remark \ref{rem:quasi-sym}, $B'$ does
not contain any $p$-point and hence $B' \ne A_u$.

(a) Suppose first that the $p$-point $z \in G$, such that $L_p(z) \ge L_p(d)$ for all $p$-points $d \in G$,
does not belong to the open arc $(u, v)$. Then $B \ne A_v$.

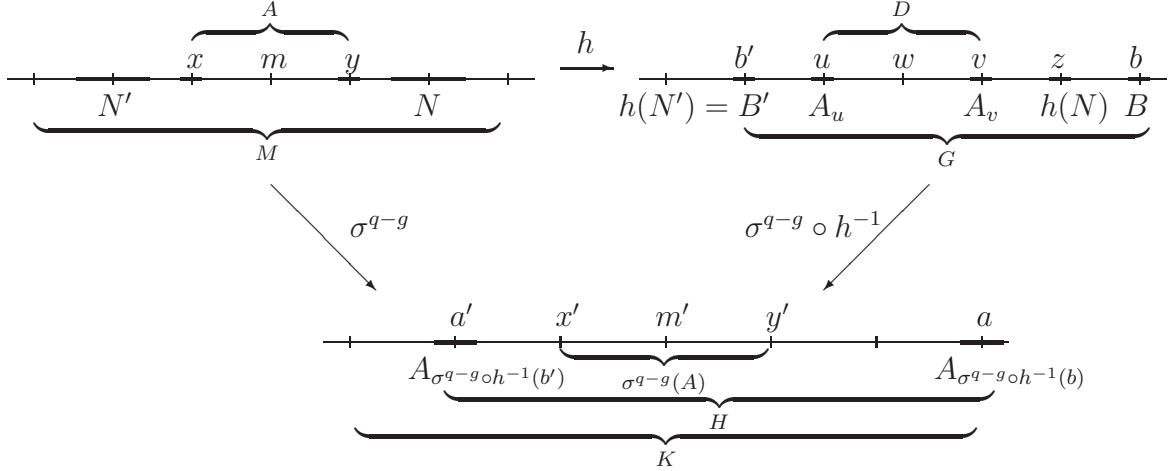
\begin{figure}[ht]
\unitlength=7mm
\begin{picture}(15,8.5)(3.5,-1)
\put(0,6){\line(1,0){10}}
\put(0.5,5.9){\line(0,1){0.2}}
\put(2,5.9){\line(0,1){0.2}}
\put(1.7,5.3){$N'$}
\put(3.5,5.9){\line(0,1){0.2}}
%\put(3.2,5.3){$A_x$}
\put(3.4,6.2){$x$}
\put(5,5.9){\line(0,1){0.2}}
\put(4.8,6.2){$m$}
\put(6.5,5.9){\line(0,1){0.2}}
%\put(6.2,5.3){$A_y$}
\put(6.4,6.2){$y$}
\put(8,5.9){\line(0,1){0.2}}
\put(7.7,5.3){$N$}
\put(9.5,5.9){\line(0,1){0.2}}
\put(0.5,5.2){$\underbrace{\qquad \qquad \qquad \qquad \qquad \qquad \qquad \quad}_{M}$}
\put(3.5,6.7){$\overbrace{\qquad \qquad \quad}^{A}$}
\put(10.5,6.2){\vector(1,0){1}}\put(10.8,6.5){$h$}
\put(12,6){\line(1,0){10}}
\put(12.5,5.9){\line(0,1){0.2}}
\put(14,5.9){\line(0,1){0.2}}
\put(11.6,5.3){$h(N') = B'$}
\put(13.8,6.2){$b'$}
\put(15.5,5.9){\line(0,1){0.2}}
\put(15.2,5.3){$A_{u}$}
\put(15.3,6.2){$u$}
\put(17,5.9){\line(0,1){0.2}}
\put(16.8,6.2){$w$}
\put(18.5,5.9){\line(0,1){0.2}}
\put(18.15,5.3){$A_{v}$}
\put(18.3,6.2){$v$}
\put(20,5.9){\line(0,1){0.2}}
\put(19.6,5.3){$h(N)$}
\put(19.8,6.2){$z$}
\put(21.5,5.9){\line(0,1){0.2}}
\put(21.2,5.3){$B$}
\put(21.3,6.2){$b$}
\put(14,5.1){$\underbrace{\qquad \qquad \qquad \qquad \qquad \qquad \quad}_{G}$}
\put(15.5,6.7){$\overbrace{\qquad \qquad \quad}^{D}$}
\put(5,4){\vector(1,-1){2}}\put(6.5,3){$\sigma^{q-g}$}
\put(17.5,4){\vector(-1,-1){2}}\put(14,3){$\sigma^{q-g} \circ h^{-1}$}
\put(6,1){\line(1,0){13}}
\put(6.5,0.9){\line(0,1){0.2}}
\put(8.5,0.9){\line(0,1){0.2}}
\put(7.6,0.3){$A_{\sigma^{q-g} \circ h^{-1}(b')}$}
\put(10.5,0.9){\line(0,1){0.2}}
\put(8.4,1.3){$a'$}
\put(10.4,1.3){$x'$}
\put(12.5,0.9){\line(0,1){0.2}}
\put(12.3,1.3){$m'$}
\put(14.5,0.9){\line(0,1){0.2}}
\put(14.4,1.3){$y'$}
\put(16.5,0.9){\line(0,1){0.2}}
\put(17.6,0.3){$A_{\sigma^{q-g} \circ h^{-1}(b)}$}
\put(18.4,1.3){$a$}
\put(18.5,0.9){\line(0,1){0.2}}
\put(8.3,0.1){$\underbrace{ \qquad \qquad \qquad \qquad \qquad \qquad \qquad \quad \quad \quad \ \ }_{H}$}
\put(6.6,-0.6){$\underbrace{\qquad \qquad \qquad \qquad \qquad \qquad \qquad \qquad \qquad \quad \ \ \ }_{K}$}
\put(10.5,0.9){$\underbrace{\qquad \qquad \qquad \ \ }_{\sigma^{q-g}(A)}$}
\thicklines
\put(1.3,6.01){\line(1,0){1.4}}
\put(1.3,5.99){\line(1,0){1.4}}
\put(7.3,6.01){\line(1,0){1.4}}
\put(7.3,5.99){\line(1,0){1.4}}
\put(3.28,6.01){\line(1,0){0.4}}
\put(3.28,5.99){\line(1,0){0.4}}
\put(6.28,6.01){\line(1,0){0.4}}
\put(6.28,5.99){\line(1,0){0.4}}
\put(13.78,6.01){\line(1,0){0.4}}
\put(13.78,5.99){\line(1,0){0.4}}
\put(15.28,6.01){\line(1,0){0.4}}
\put(15.28,5.99){\line(1,0){0.4}}
\put(18.28,6.01){\line(1,0){0.4}}
\put(18.28,5.99){\line(1,0){0.4}}
\put(19.78,6.01){\line(1,0){0.4}}
\put(19.78,5.99){\line(1,0){0.4}}
\put(21.28,6.01){\line(1,0){0.4}}
\put(21.28,5.99){\line(1,0){0.4}}
\put(8.1,1.01){\line(1,0){0.8}}
\put(8.1,0.98){\line(1,0){0.8}}
\put(18.1,1.01){\line(1,0){0.8}}
\put(18.1,0.98){\line(1,0){0.8}}
\end{picture}
\caption{The relations between points and arcs in $\chain_q$ (left), $\chain_p$ (right), and $\chain_g$ (bottom).
\label{fig:lem:l}}
\end{figure}

Let $b'$ be any point of $B'$ and let $b$ be the midpoint of $B$. %Then $B' = A_{b'}$ and $B = A_b$.
Then $b'$ and $b$ are nodes of $G$ (see Remark~\ref{rem:quasi-sym} for the definition of a node).
Since $L_p(u) \ne L_p(v)$, $u$ and $v$ are also nodes of $G$, as well as $w$ and $z$.

Let $a \vdash \sigma^{q-g} \circ h^{-1}(b)$ (note that $b$ is a $p_g$-point, \ie $b$ is the midpoint of the
extended arc-component $\hat A_b$ such that $\sigma^{q-g} \circ h^{-1}(\hat A_b) \subseteq
A_{\sigma^{q-g} \circ h^{-1}(b)} = A_a \subset \ell_g^a \in \chain_g$). If the arc-component
$A_{\sigma^{q-g} \circ h^{-1}(b')}$ contains a $g$-point, let $a'$ be its midpoint; otherwise let $a'$ be
any point of $A_{\sigma^{q-g} \circ h^{-1}(b')}$. Let us consider the arc $H = [a', a]$, see
Figure~\ref{fig:lem:l}. Let $x' \vdash \sigma^{q-g} \circ h^{-1}(u)$, $y' \vdash \sigma^{q-g} \circ h^{-1}(v)$,
$z' \vdash \sigma^{q-g} \circ h^{-1}(z)$ and $m' \vdash \sigma^{q-g} \circ h^{-1}(w)$. Since
$\chain_p \prec h(\chain_g)$, the arc $H$ is $g$-link-symmetric and $g$-points $a', x', m', y', a$ are some
of its nodes. Note that $x' = \sigma^{q-g}(x)$ and $y' = \sigma^{q-g}(y)$, thus the arc $[x', y']$ is
$g$-symmetric. Since there is at least one node in $H$ on either side of $[x', y']$,
Remark~\ref{rem:quasi-sym} says that $H$ is contained in the maximal $g$-symmetric arc $K$ with midpoint
$m'$. Therefore the arc $M = \sigma^{-q+g}(K) \supset A$ is $q$-symmetric with midpoint $m$.

Let $j, k \in \None$, $j \le k$, be such that $h(\cup_{i = j}^k \ell_q^i) \subseteq \ell_p^{b'}$,
$h(\ell_q^{j-1}) \nsubseteq \ell_p^{b'}$ and $h(\ell_q^{k+1}) \nsubseteq \ell_p^{b'}$. Let $N'$ be an
arc-component of $\cup_{i = j}^k \ell_q^i$ such that $h(N') = B' \subset \ell_p^{b'}$. Obviously,
$N' \subset M$. Since $M$ is $q$-link symmetric, there exists an arc-component $N$ of
$\cup_{i = j}^k \ell_q^i$ such that the arc $[N', N] \subset M$ is $q$-symmetric with midpoint $m$.
Then $h(N) \subset h(M)$ is an arc-component of $\ell_p^{b'}$. Since $[N', N]$ is $q$-symmetric, the
arc-component $h(N')$ contains a $p$-point if and only if the arc-component $h(N)$ contains a $p$-point.
Since $h(N') = B'$, the arc-components $h(N')$ and $h(N)$ do not contain any $p$-point, see
Figure~\ref{fig:lem:l}.

On the other hand, the arc $[h(N'), h(N)]$ is $p$-link-symmetric with midpoint $w$. Recall that $w$
is also the midpoint of the arc $D \subset [h(N'), h(N)]$, $D$ is not $p$-symmetric by assumption, and
$D \subset G$, where $G$ is an extended maximal increasing (basic) quasi-$p$-symmetric arc. The arc-component
$h(N)$ can be contained in the arc $[A_v, B]$, as in Figure~\ref{fig:lem:l}. In this case $h(N)$ does
contain at least one $p$-point, a contradiction.

The other possibility is that $h(N)$ is not contained in $[A_v, B]$, \ie $h(N)$ is on the right hand
side of $B$. Since $[h(N'), h(N)]$ is $p$-link symmetric and $h(N') = B'$ contains a node $b'$ of $G$, we
have that $h(N)$ also contains a node of $G$, say $n$. Hence, on the right hand side of $z$ (which is the
$p$-point with the highest $p$-level in $G$), there are at least two nodes, $b$ and $n$. Therefore, by
Remark~\ref{rem:quasi-sym}, $G$ is contained in a $p$-symmetric arc with midpoint $z$ and this arc
conatins $h(N)$, implying that $h(N)$ does contain at least one $p$-point, a contradiction.

(b) Let us assume now that $B = A_v$. Then $z \in (u, v)$. Let $a', x', m', z', y'$ and $H$ be defined
as in case (a). Since $b', u, w, z, v$ are nodes of $G$, we have that $a', x', m', z', y'$ are also nodes of
$H$. Moreover, since $[x', y']$ is $g$-symmetric with midpoint $m'$, there is $z'' \in [x', m']$ such that
$[z'', z']$ is $g$-symmetric with midpoint $m'$, and $z''$ is a node of $H$. Thus, the arc
between nodes $z''$ and $z'$ is $g$-symmetric, and on either side of $[z'', z']$ there is at least one
additional node. By Remark~\ref{rem:quasi-sym}, $H$ is contained in the maximal $g$-symmetric arc $K$ with
midpoint $m'$, and the arc $M = \sigma^{-q+g}(K) \supset A$ is $q$-symmetric with midpoint $m$. Now the
proof follows in the same way as in case (a).

If $D$ is contained in an extended maximal decreasing (basic) quasi-$p$-symmetric arc $G$, the proof is
analogous.

{\bf (2)} Let us assume that $D$ is contained in a $p$-symmetric arc $G$ which is concatenation of two arcs,
one of which is a maximal increasing (basic) quasi-$p$-symmetric arc, and the other one is a maximal
decreasing (basic) quasi-$p$-symmetric arc. Let $B'$ and $B$ be the link-tips of $G$, thus $G = [B', B]$.
Then, by Remark \ref{rem:quasi-sym}, $B'$ and $B$ do not contain any $p$-point and hence $B' \ne A_u$
and $B \ne A_v$. If for the midpoint $z$ of $G$ we have $z \nin (u, v)$, we are in case (1). If
$z \in (u, v)$ (note $z \ne m$ since the arc $D$ is not $p$-symmetric),
then the proof is analogous to the
proof of case (1a) (since $B \ne A_v$).
\end{proof}

\begin{defi}\label{defi:bridges}
Let $\kappa \in \None$, $\kappa > 2$, be the smallest integer with $c_{\kappa} < c$. It is easy to see that
$\kappa$ is odd. Set
$$
\Lambda_\kappa :=
%\{ 2, 4, \dots, \kappa-5, \kappa-3, \kappa-2, \kappa-1, \kappa, \kappa+1, \dots \} =
\None \setminus \{ 1,3, 5, \dots, \kappa-4\}.
$$
\end{defi}

\begin{lemma}\label{lem:bridges}
Let $x,y$ be $q$-points of $\A$. Then there exist $q_p$-points $x'$, $z'$ and $y'$ such that the arc
$A = [x',z']$ is $q$-symmetric with midpoint $y'$, $L_q(x') = L_q(z') = L_q(x)$ and $L_q(y') = L_q(y)$
if and only if $L_q(y)-L_q(x) \in \Lambda_\kappa$.
\end{lemma}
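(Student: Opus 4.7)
The plan is to characterise the existence of the required $q$-symmetric arc purely combinatorially in terms of the difference $d := L_q(y) - L_q(x)$. By Remark~\ref{rem_basic}(a), the midpoint of any $q$-symmetric arc has strictly the highest $q$-level; applying $\sigma^{-L_q(x)}$ I reduce the statement to the canonical form in which the endpoints have $q$-level $0$ (and project to $c$ under $\pi_q$) while the midpoint has $q$-level $d$ (projecting to $c_d$). Since $\A$ is dense in the core and every arc of $\A$ has finitely many $p$-points, realisability of a finite palindromic folding pattern inside $\A$ reduces to admissibility of that pattern in the sense of \eqref{eq:Hofbauer}.

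For the forward implication ($\Rightarrow$) I would analyse the $q$-folding pattern outward from the midpoint. Near $y'$ the map $\pi_q$ has a fold, so both arcs leaving $y'$ first meet an adjacent $q$-point of level $\beta(d)$; iterating, the outward $\beta$-chain realises levels $d, \beta(d), \beta^2(d), \dots$, and placing the endpoints at level $0$ gives a decomposition $d = S_{j_1} + \cdots + S_{j_k}$ along a $\beta$-subtraction sequence. Invoking Lemma~\ref{lem:order} and the definition of $\kappa$ (so that $c_i$ sits on the same side of $c$ as $c_1$ for $3 \le i < \kappa$), I would check that the set of achievable $d$'s is precisely $\Lambda_\kappa$: the excluded odd values $d \in \{1, 3, \dots, \kappa-4\}$ force a cutting-time configuration incompatible with the position of $c_d$ relative to $c$.

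For the reverse implication ($\Leftarrow$), given $d \in \Lambda_\kappa$ I construct the arc explicitly. Choose $L$ and $k$ so that the chain $L, \beta(L), \dots, \beta^k(L) = L - d$ realises the endpoint level, then extend palindromically in both directions to obtain a finite $q$-folding pattern symmetric about a level-$L$ midpoint with level-$(L-d)$ endpoints. Admissibility of this pattern follows from \eqref{eq:Hofbauer}; density of $\A$ in the core realises it as an actual arc of $\A$; and the $q_p$-point requirement on $x', y', z'$ is met by invoking the special chains of Appendix~\ref{sec:chains}, arranged so that the relevant $q$-points are midpoints of their extended arc-components with respect to $\chain_p$.

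The hardest step will be the explicit matching $\{L - \beta^k(L) : L \in \None,\ k \ge 1\} = \Lambda_\kappa$. This reduces to tracking how cutting times accumulate along $\beta$-chains, and in particular to separating the boundary case $d = \kappa - 2$ (included in $\Lambda_\kappa$) from $d = \kappa - 4$ (excluded). The Fibonacci-like condition \eqref{eq:cond4} together with Lemma~\ref{lem:order} should supply the required control over the positions of $c_{S_k}$ near $c$.
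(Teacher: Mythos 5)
The paper does not actually prove this lemma; it invokes Lemma~46 of \cite{Kail2} and Lemmas~3.13--3.14 of \cite{Stim2}, noting that those proofs, although written for the periodic and finite-critical-orbit cases, go through unchanged in general. So there is no in-paper argument to compare against: your proposal is an attempt at a self-contained proof, which is a genuinely different route.

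As a sketch your plan has the right ingredients --- reducing to a canonical symmetric folding pattern by shifting with $\sigma$, translating realisability into admissibility via~\eqref{eq:Hofbauer}, and invoking density of $\A$ to realise an admissible palindromic pattern as an actual arc --- but it is not a proof. You explicitly leave ``the hardest step'' open, and that step is the entire content of the lemma: identifying precisely which level differences are achievable. Moreover, the characterisation you propose, $\{L - \beta^k(L) : L \in \None,\ k \ge 1\}$, is not obviously the right one and is stated without argument. The immediate $\beta$-neighbours of the midpoint do have level $\beta(L)$, but the folding pattern of a $q$-symmetric arc is not a single $\beta$-chain: between two $q$-points of equal level there is always one of strictly higher level (Remark~\ref{rem_basic}(a)), so the levels along each half of the arc are not monotone and are not obtained by iterating $\beta$ from $L$. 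The correct constraint involves the entire palindromic pattern and its compatibility with the kneading sequence, not just the outermost chain $\beta^k(L)$. You also say nothing about why $\kappa$ --- the first return time of the critical orbit to the other side of $c$ --- is the parameter that governs the excluded small odd differences; this is a nontrivial fact that the cited references establish by a careful case analysis. Until that combinatorial matching is carried out (in particular distinguishing $d = \kappa - 2$, allowed, from $d = \kappa - 4$, excluded, as you note), the proposal remains a plan rather than a proof.
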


This is proven in Lemma 46 of \cite{Kail2} and in Lemmas 3.13 and 3.14 of \cite{Stim2}. Although
\cite{Kail2} deals with the periodic case and \cite{Stim2} with the finite orbit case, the proofs of the
mentioned lemmas work in the general case, as stated above.

\begin{proposition}\label{prop:nonsymmetric}
Let $x, y \in E_q^{\A} \subset \A$ be $q_p$-points and let $u \vdash h(x)$ and $v \vdash h(y)$.
Then $L_q(x) < L_q(y)$ implies $L_p(u) < L_p(v)$.
\end{proposition}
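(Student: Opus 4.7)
The plan is to adapt the strategy that proved Proposition \ref{prop:symmetric}: I will enclose $x$ and $y$ inside a $q$-symmetric arc supplied by Lemma \ref{lem:bridges}, transport it under $h$ (which carries $q$-symmetric arcs to $p$-symmetric arcs by the observation recorded immediately after Proposition \ref{prop:symmetric}), and then read off the required inequality from the fact that the midpoint of a $p$-symmetric arc strictly dominates in $p$-level every other $p$-point of that arc. Set $m := L_q(x)$ and $n := L_q(y)$.

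Generic case: $n - m \in \Lambda_\kappa$. By Lemma \ref{lem:bridges} there are $q_p$-points $x', y', z' \in \A$ with $A := [x', z']$ $q$-symmetric, midpoint $y'$, and $L_q(x') = L_q(z') = m$, $L_q(y') = n$. Then $h(A)$ is $p$-symmetric with midpoint $v' \vdash h(y')$, so $L_p(u') < L_p(v')$ for $u' \vdash h(x')$. Proposition \ref{prop:symmetric} applied to the matched $q_p$-point pairs $(x, x')$ and $(y, y')$ (equal $q$-levels in pairs) gives $L_p(u) = L_p(u')$ and $L_p(v) = L_p(v')$, whence $L_p(u) < L_p(v)$.

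Exceptional case: $n - m \in \{1, 3, \dots, \kappa - 4\}$. I pick $n^* \ge \max(m, n) + \kappa - 2$, so that $n^* - m,\ n^* - n \in \Lambda_\kappa$ (every integer $\ge \kappa - 2$ lies in $\Lambda_\kappa$). Lemma \ref{lem:bridges} then produces two $q$-symmetric arcs in $\A$ sharing the same midpoint $\tilde y$ at level $n^*$: an outer arc $A^*$ with endpoints at level $m$ and a nested concentric arc $B^* \subsetneq A^*$ with endpoints at level $n$. Transporting under $h$, $h(B^*) \subsetneq h(A^*)$ are concentric $p$-symmetric arcs with common midpoint $\tilde v \vdash h(\tilde y)$, and Proposition \ref{prop:symmetric} identifies the endpoint-link-centered $p$-levels as $\phi(m) = L_p(u)$ and $\phi(n) = L_p(v)$ respectively. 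To conclude $L_p(u) < L_p(v)$ it suffices to know that the endpoint-link-centered $p$-level of a concentric $p$-symmetric sub-arc is strictly larger than that of the arc it sits inside.

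Main obstacle. This last monotonicity claim is the only point that is not immediate from Proposition \ref{prop:symmetric} and Lemma \ref{lem:bridges} alone: the $p$-folding pattern of a $p$-symmetric arc is generally not monotone from the endpoints to the midpoint, and for small odd differences $n-m$ one cannot directly produce the required symmetric arc via Lemma \ref{lem:bridges}. The Fibonacci-like structure enters precisely here, through the special chains $\chain_p$ of Appendix \ref{sec:chains} and the geometric asymmetry recorded in Lemma \ref{lem:order}, which together guarantee that the endpoint-link-centered $p$-levels of a nested tower of concentric $p$-symmetric arcs with common midpoint form a strictly increasing sequence as the arcs shrink inward. With this monotonicity in hand, the exceptional case closes in the same way as the generic one.
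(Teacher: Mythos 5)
Your generic case ($L_q(y)-L_q(x)\in\Lambda_\kappa$) is exactly the paper's Case {\bf (1)}: apply Lemma~\ref{lem:bridges} to produce a $q$-symmetric arc with the right endpoint and midpoint $q$-levels, push through $h$, use Proposition~\ref{prop:symmetric} to identify the endpoint $p$-levels, and read off the strict inequality from $p$-symmetry. That part is fine.

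The exceptional case has a genuine gap, and you have put your finger on it yourself but then waved at it rather than closing it. Your argument needs the claim that, for two concentric $p$-symmetric arcs $h(B^*)\subsetneq h(A^*)$ with the same midpoint, the endpoint $p$-level of the inner arc exceeds that of the outer one. This is simply false, and the Fibonacci-like structure does not rescue it: the $p$-folding pattern on either side of a $p$-point of level $L$ runs through the $\beta$-orbit of $L$ and is not monotone. Concretely, in the paper's own Fibonacci example \eqref{eq:fold}, the folding pattern around the $p$-point of level $9$ is $\dots\ 0\ 3\ 0\ 1\ 0\ 2\ 0\ 1\ 4\ 1\ 9\ 1\ 4\ 1\ 0\ 2\ 0\ 1\ 0\ 3\ 0\ \dots$, so the endpoint levels of the nested symmetric arcs around $9$, read from the inside out, are $1,4,1,0,2,0,1,0,3,\dots$ --- in particular a level-$1$ arc lies inside a level-$4$ arc and another level-$1$ arc lies outside it. There is also a secondary weakness: Lemma~\ref{lem:bridges} produces, for each pair of levels, \emph{some} symmetric arc with those data, but gives no guarantee that the arcs you build for $(m,n^*)$ and $(n,n^*)$ share the same midpoint $q_p$-point or nest.

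The paper handles the exceptional case by a quite different mechanism: it assumes a minimal counterexample ($x$ with smallest $q$-level, then $y$ with smallest gap $L_q(y)-L_q(x)$ subject to $L_p(u)>L_p(v)$), uses the already-established Case {\bf (1)} repeatedly together with Lemma~\ref{lem:bridges} to pin down the gap to exactly $1$ on both sides (Claims 1 and 2), propagates this rigid shift through all higher levels by induction (Claim 3), and finally derives a contradiction by producing a pair whose $q$-level difference lies in $\Lambda_\kappa$ while the forced $p$-level difference does not. The contradiction in Claim 2 is reached via density of $\A$ and $h(\A)$ in both directions, not via any monotonicity of nested symmetric arcs. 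Until you replace your monotonicity assertion with an argument of comparable strength, the exceptional case remains open in your write-up.
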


\begin{proof}
{\bf (1)}
Let us first assume that $L_q(y) - L_q(x) \in \Lambda_\kappa$. Then, by Lemma~\ref{lem:bridges}, there exist
$q_p$-points $x'$, $z'$ and $y'$ such that the arc $A = [x', z']$ is $q$-symmetric with midpoint $y'$,
$L_q(x') = L_q(z') = L_q(x)$, $L_q(y') = L_q(y)$ and between $x'$ and $z'$ there are no $q_p$-points with
$q$-level $L_q(x')$. Let $u \vdash h(x)$, $v \vdash h(y)$, $u' \vdash h(x')$, $v' \vdash h(y')$,
$w' \vdash h(z')$. By Proposition~\ref{prop:symmetric} we have $L_p(u) = L_p(u') = L_p(w')$,
$L_p(v) = L_p(v')$ and between the points $u'$ and $w'$ there are no $p$-points with the $p$-level $L_p(u')$.
Therefore, the arc $[u', w']$ is $p$-symmetric with midpoint $v'$, implying
$L_p(v) = L_p(v') > L_p(u') = L_p(u)$, which proves the proposition in this case. Note that also we have
$L_p(v) - L_p(u) \in \Lambda_\kappa$.

\begin{figure}[ht]
\unitlength=8mm
\begin{picture}(16,5)(0,1)
\put(1,5){\oval(0.25,0.25)[l]}
\put(0.875, 5){\circle*{0.11}}\put(0.3,4.75){$x$}
\put(1.5,4.8){\dots \dots \dots}
%\put(1,4.875){\line(1,0){4}}
%\put(5,4.75){\oval(0.25,0.25)[r]}
%\put(5,4.625){\line(-1,0){1}}
\put(4,4.5){\oval(0.25,0.25)[l]}
\put(3.875, 4.5){\circle*{0.11}}\put(3.4,4.4){$y$}
\put(1,3){\oval(0.25,0.25)[l]}
\put(0.875, 3){\circle*{0.11}}\put(0.3,2.75){$x'$}
\put(1,2.875){\line(1,0){4}}
\put(5,2.75){\oval(0.25,0.25)[r]}
\put(5,2.625){\line(-1,0){1}}
\put(4,2.5){\oval(0.25,0.25)[l]}
\put(3.875, 2.5){\circle*{0.11}}\put(3.4,2.4){$y'$}
\put(5,2.375){\line(-1,0){1}}
\put(5,2.25){\oval(0.25,0.25)[r]}
\put(1,2.125){\line(1,0){4}}
\put(1,2){\oval(0.25,0.25)[l]}
\put(0.875, 2){\circle*{0.11}}\put(0.3,1.85){$z'$}
%%%
\put(6.5,3.5){\vector(1,0){3}} \put(8,3.7){$h$}
%%%
\put(11,5){\oval(0.25,0.25)[l]}
\put(10.875, 5){\circle*{0.11}}\put(10.3,4.75){$u$}
\put(11.5,4.8){\dots \dots \dots}
%\put(11,4.875){\line(1,0){4}}
%\put(15,4.75){\oval(0.25,0.25)[r]}
%\put(15,4.625){\line(-1,0){1}}
\put(14,4.5){\oval(0.25,0.25)[l]}
\put(13.875, 4.5){\circle*{0.11}}\put(13.4,4.4){$v$}
\put(11,3){\oval(0.25,0.25)[l]}
\put(10.875, 3){\circle*{0.11}}\put(10.3,2.75){$u'$}
\put(11,2.875){\line(1,0){4}}
\put(15,2.75){\oval(0.25,0.25)[r]}
\put(15,2.625){\line(-1,0){1}}
\put(14,2.5){\oval(0.25,0.25)[l]}
\put(13.875, 2.5){\circle*{0.11}}\put(13.4,2.4){$v'$}
\put(15,2.375){\line(-1,0){1}}
\put(15,2.25){\oval(0.25,0.25)[r]}
\put(11,2.125){\line(1,0){4}}
\put(11,2){\oval(0.25,0.25)[l]}
\put(10.875, 2){\circle*{0.11}}\put(10.3,1.85){$w'$}
\end{picture}
\caption{The points $x$ and $y$, their companion arc $A = [x', z']$ and their images under $h$. Dots indicate some shape of the arc
$[x,y]$ and $[u,v]$; the shape of $[x,y]$ can be very different from the shape of $[x',y']$ and similar for the shapes of $[u,v]$ and $[u',v']$.}
\label{fig:AhA}
\end{figure}
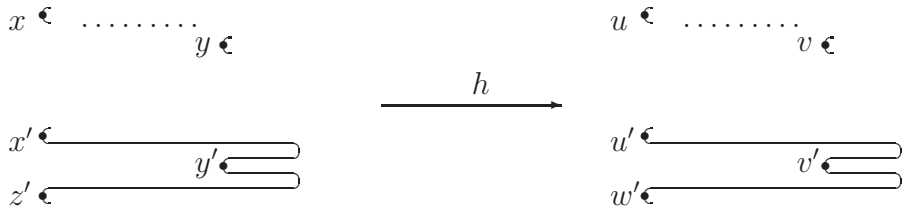

{\bf (2)}
Let us now assume that $\Lambda_\kappa \ne \None$, $L_q(y) - L_q(x) \in \{ 1,3, \dots, \kappa-4\}$, and that
for $u \vdash h(x)$ and $v \vdash h(y)$ we have, by contradiction, $L_p(u) > L_p(v)$.

Without loss of generality we suppose that $x$ has the smallest $q$-level among all $q_p$-points which
satisfy the above assumption and that, for this choice of $x$, the $q_p$-point $y$ (which also satisfies
the above assumption) is such that $L_q(y) - L_q(x) > 0$ is the smallest difference of $q$-levels.

{\bf Claim 1:} $L_q(y) - L_q(x) = 1$.

Let us assume, by contradiction, that $L_q(y) - L_q(x) > 1$, and
let $z$ be a $q_p$-point such that
$L_q(y) - L_q(z) = 2$. Note first that $L_q(z) \ne L_q(x)$ since $L_q(y) - L_q(x) \ne 2$ by assumption.
Therefore, $L_q(z) > L_q(x)$.

Let $w \vdash h(z)$ and recall $u \vdash h(x)$ and $v \vdash h(y)$. By the choice of $q_p$-points $x$ and
$y$ and since $L_q(z) - L_q(x) < L_q(y) - L_q(x)$, we have $L_p(w) > L_p(u)$ and $L_p(u) > L_p(v)$, implying
$L_p(w) > L_p(v)$.

On the other hand, $L_q(y) - L_q(z) \in \Lambda_\kappa$ and by {\bf (1)} we have $L_p(v) > L_p(w)$, a
contradiction. This proves Claim 1.

{\bf Claim 2:} $L_p(u) - L_p(v) = 1$.

Let us assume, by contradiction, that $L_p(u) - L_p(v) > 1$. For a $q_p$-point $z$ let $w$ denote the
$p$-point with $w \vdash h(z)$. We will show that the above assumption implies that there is no $q_p$-point
$z$ such that $L_p(w) = L_p(v) + 1$. This contradicts assumption that both arc-components $\A$ and
$h(\A)$ are dense in $\ILcores$ in both directions.

By the choice of $q_p$-points $x$ and $y$, for every $q_p$-point $z$ such that
$L_q(z) < L_q(x) < L_q(y) = L_q(x) + 1$ we have $L_p(w) < L_p(v)$ and hence $L_p(w) \ne L_p(v) + 1$.

Let $L_q(z) = L_q(x) + 2$. Since $L_q(z) - L_q(x) \in \Lambda_\kappa$, by {\bf (1)} we have
$L_p(w) > L_p(u) > L_p(v) + 1$.

Let $L_q(z) = L_q(x) + 3$. Then $L_q(z) - L_q(y) \in \Lambda_\kappa$ (recall $L_q(y) = L_q(x) + 1$ by Claim 1)
and again by {\bf (1)} we have $L_p(w) >  L_p(v)$ and $L_p(w) -  L_p(v) \in \Lambda_\kappa$. Note that
$L_p(w) -  L_p(v) \ne 1$ since $1 \nin \Lambda_\kappa$ (recall $\Lambda_\kappa \ne \None$ by assumption).
Hence $L_p(w) >  L_p(v) + 1$.

It follows now by induction that for every $i \in \None$, $L_q(z) = L_q(x) + 3 + i$ implies
$L_p(w) > L_p(v) + 1$. To see this, for a $q_p$-point $z'$ let $w'$ denote the $p$-point with
$w' \vdash h(z')$. Take $j \in \None$ such that $L_q(z) = L_q(x) + 3 + i$ implies
$L_p(w) > L_p(v) + 1$ for every $i < j$. Let $L_q(z') = L_q(x) + 1 + j$ and $L_q(z) = L_q(x) + 3 + j$.
Then $L_q(z) - L_q(z') \in \Lambda_\kappa$ and by {\bf (1)} we have $L_p(w) >  L_p(w')$. Since
$L_p(w') > L_p(v) + 1$, we have $L_p(w) > L_p(v) + 1$. This proves Claim 2.

{\bf Claim 3:} For a $q_p$-point $z$ let $w$ denote the $p$-point with $w \vdash h(z)$. For every
$i \in \None$, $L_q(z) = L_q(x) + 2i$ implies $L_p(w) = L_p(u) + 2i$,
and $L_q(z) = L_q(y) + 2i$ implies $L_p(w) = L_p(v) + 2i$.

Let $L_q(z) = L_q(x) + 2 = L_q(y) + 1$. Note first that $L_p(w) \ne L_p(u) + 1$, since by {\bf (1)}
$L_q(z) - L_q(x) \in \Lambda_\kappa$ implies $L_p(w) - L_p(u) \in \Lambda_\kappa$. Note also that
$L_q(z) - L_q(y) \nin \Lambda_\kappa$. Therefore, $L_p(w) = L_p(v) + L = L_p(u) - 1 + L$, where
$1 < L < \kappa - 2$ is odd.

For $q_p$-points $z'$ and $z''$, let $w'$ and $w''$ denote the $p$-points with $w' \vdash h(z')$ and
$w'' \vdash h(z'')$ respectively.

Let us assume that $L_q(z') = L_q(y) + 2$ and $L_p(w') \ne L_p(v) + 2 = L_p(u) + 1$. Then
$L_p(w') > L_p(v) + 2$ and for every $q_p$-point $z''$ with $L_q(z'') > L_q(z')$ we have
$L_p(w'') > L_p(v) + 2$. This implies that there is no $q_p$-point $z''$ such that
$L_p(w'') = L_p(v) + 2 = L_p(u) + 1$, a contradiction. Therefore, $L_p(w') = L_p(v) + 2$, and by Claims
1 and 2, $L_p(w) = L_p(v) + 3 = L_p(u) + 2$. The proof of Claim 3 follows by induction in the same way.

Finally, to complete the proof of the proposition, let us consider $q_p$-point $z$ such that
$L_q(z) - L_q(x) = \kappa - 2 \in \Lambda_\kappa$. Then,
by Claim 3 (see Figure \ref{fig:level1}), $L_p(w) - L_p(u) = \kappa - 4 \nin \Lambda_\kappa$, a contradiction.

\begin{figure}[ht]
\unitlength=10mm
\begin{picture}(15,4.2)(1,1)
\put(0,4){${\color{Red} L_q(x)} \ \ L_q(y) = L_q(x) + 1 \ \ L_q(x) + 2 \ \ L_q(x) + 3 \ \ \dots \ \ L_q(x) + \kappa - 3 \ \ {\color{Red} L_q(z) = L_q(x) + \kappa - 2}$}
\put(0.5,4.5){$\overbrace{\qquad \qquad \qquad \qquad \qquad \qquad \qquad \qquad \qquad \qquad \qquad \qquad \qquad \qquad \qquad }^{L_q(z) - L_q(x) = \kappa - 2}$}
{\color{Red} \put(0.5,3.6){\vector(2,-1){2}}}
\put(2.2,3.6){\vector(-3,-2){1.5}}
\put(6.2,3.6){\vector(3,-2){1.5}}
\put(7.7,3.6){\vector(-3,-2){1.5}}
\put(8.8,3){\dots}
\put(11.3,3.6){\vector(3,-1){3}}
{\color{Red} \put(12.8,3.6){\vector(-3,-2){1.5}}}
\put(0,2){$L_p(v) \ \ {\color{Red} L_p(u) = L_p(v) + 1} \ \ L_p(v) + 2 \ \ L_p(v) + 3 \ \ \dots \ \ {\color{Red} L_p(w) = L_p(v) + \kappa - 3} \ \ L_p(v) + \kappa - 2$}

\put(1.8,1.7){$\underbrace{\qquad \qquad \qquad \qquad \qquad \qquad \qquad \qquad \qquad \qquad }_{L_p(w) - L_p(u) = \kappa - 4}$}
\end{picture}
\caption{The configuration of levels that cannot exist.
\label{fig:level1}}
\end{figure}
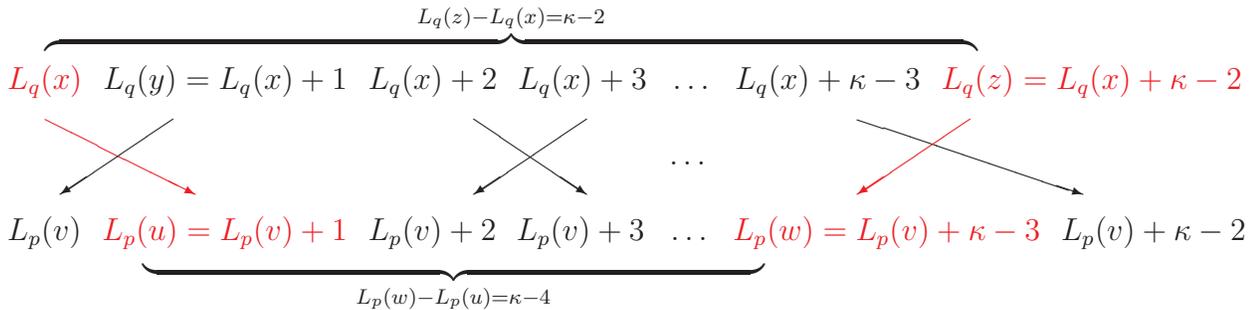

Therefore, $L_q(x) < L_q(y)$ implies $L_p(u) < L_p(v)$, which proves the proposition.
\end{proof}

\section{Proof of the main theorems}\label{sec:maintheorems}

Consider the  arc-component $\A := h(\Rr) \subset \ILcores$, and let
$E_p^{\A} = (y^i)_{i \in \Z} \subset \A$ be the set of all $p$-points of $\A$ such that $y^0 = h(\rho)$.
Let $(u^i)_{i \in \Z} \subset E_p^{\A}$ be the set of all salient $p$-points of $\A$, \ie the set of all
$\A$-salient $p$-points, with $u^0 = h(\rho)$. Recall that $\Rr$ is dense in $\ILcores$ in both directions.
Since $h$ is a homeomorphism, $\A$ and in fact $h^i(\Rr)$, $i \in \Z$, are also dense in the core $\ILcores$ in
both directions.

We want to prove that $\A = \Rr$. For a $p$-point $y$ we write $y \approx x$ if $y \in A_x$.

\begin{lemma}\label{lem:some}
There exist $M, M' \in \Z$ such that $h(t^i) \approx u^{i+M}$ and $h(t^{-j}) \approx u^{-j-M'}$, for every
$i,j \in \None$ with $i+M > 0$, $j+M' > 0$, if $h$ is order preserving, or $h(t^i) \approx u^{-i-M}$ and
$h(t^{-j}) \approx u^{j+M'+1}$ if $h$ is order reversing.
\end{lemma}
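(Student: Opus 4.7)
By Proposition~\ref{prop:chains} I choose chains $\chain_g, \chain_p, \chain_q$ with $h(\chain_q) \preceq \chain_p \preceq h(\chain_g)$, so that Propositions~\ref{prop:symmetric} and~\ref{prop:nonsymmetric} are available; the symmetric hypotheses also allow me to apply those propositions to $h^{-1}$ with $(\chain_q,\chain_p)$ replaced by $(\chain_p,\chain_g)$. The restriction of $h$ to the dense arc-component $\Rr$ is either order-preserving or order-reversing on arcs, and I treat the order-preserving case first. By Lemma~\ref{lem:PR1} the levels $L_q(t^i)=2i-1$ and $L_q(t^{-j})=2j$ grow without bound, so there is $N$ such that every $t^i$ with $|i|\ge N$ is a $q_p$-point.

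\textbf{Monotone midpoints.} For $|i|\ge N$ let $\tilde u^i$ be the $p$-point with $\tilde u^i \vdash h(t^i)$. Proposition~\ref{prop:nonsymmetric} applied to consecutive salient points yields $L_p(\tilde u^i)<L_p(\tilde u^{i+1})$ and $L_p(\tilde u^{-j})<L_p(\tilde u^{-(j+1)})$. Order-preservation places $\tilde u^N,\tilde u^{N+1},\dots$ monotonically along $\A$ on one side of $u^0=h(\rho)$, and $\tilde u^{-N},\tilde u^{-(N+1)},\dots$ on the other.

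\textbf{Each $\tilde u^i$ accompanies an $\A$-salient $p$-point.} I claim that for each $i\ge N$ the midpoint $\tilde u^i$ lies in the arc-component $A_{u^{k(i)}}$ of some $\A$-salient $p$-point $u^{k(i)}$. Suppose not; then some $p$-point $y\in(u^0,\tilde u^i)$ satisfies $L_p(y)\ge L_p(\tilde u^i)$. Replacing $y$ by the midpoint $w$ of its extended arc-component relative to $(\chain_p,\chain_g)$ produces a $p_g$-point with $L_p(w)\ge L_p(\tilde u^i)$ still lying in $(u^0,\tilde u^i)$. Applying Propositions~\ref{prop:symmetric} and~\ref{prop:nonsymmetric} to $h^{-1}$ at $w$ and $\tilde u^i$, the $g$-midpoints $a\vdash h^{-1}(w)$ and $b\vdash h^{-1}(\tilde u^i)$ satisfy $L_g(a)\ge L_g(b)$. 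But $h^{-1}(w)$ lies on the open arc $(\rho,t^i)$ while $h^{-1}(\tilde u^i)$ lies in the $\chain_g$-arc-component of $t^i$, so the saliency of $t^i$ as a $g$-point forces $L_g(a)<L_g(t^i)\le L_g(b)$, a contradiction. Hence $\tilde u^i\in A_{u^{k(i)}}$, and symmetrically $\tilde u^{-j}\in A_{u^{-k'(j)}}$ for some indices $k'(j)$.

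\textbf{Matching indices and the order-reversing case.} Running the entire argument with $h$ and $h^{-1}$ interchanged shows that every $\A$-salient $p$-point $u^k$ with $|k|$ large lies in the arc-component of some $\tilde u^{i(k)}$. The maps $i\mapsto k(i)$ and $k\mapsto i(k)$ are therefore strictly increasing integer bijections between cofinite sets of positive integers; they must differ from the identity by a constant $M$, giving $h(t^i)\approx u^{i+M}$. The analogous bijection on the negative side produces $M'$. In the order-reversing case $h$ swaps the two sides at $u^0$, and the same analysis produces the formulas $h(t^i)\approx u^{-i-M}$ and $h(t^{-j})\approx u^{j+M'+1}$, where the extra $+1$ reflects the asymmetric convention $L_p(t^1)=1$ versus $L_p(t^{-1})=2$ which shifts the alignment between positive and negative salient points by one. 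The main obstacle is the third paragraph: promoting the offending $p$-point $y$ to a $p_g$-point $w$ without leaving $(u^0,\tilde u^i)$, and identifying the $g$-midpoints $a,b$ precisely enough to invoke the saliency of $t^i$, relies essentially on the control over extended arc-components supplied by the special chains of Appendix~\ref{sec:chains}.
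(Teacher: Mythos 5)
Your proof takes a genuinely different route from the paper's. The paper works forward through $h$: it observes that the maximal $q$-symmetric arc $B_j$ centered at $t^j$ reaches past $\rho$ (since $s > \sqrt 2$), pairs each $q_p$-point $x \in (\rho, t^j)$ with a symmetric partner $y \in (t^j, t^{j+1})$, pushes these symmetric arcs through $h$ via Proposition~\ref{prop:symmetric} to conclude that $w \vdash h(t^j)$ has the maximal $p$-level on all of $(u^0, w)$ and is therefore salient, and finally shows contiguity of indices by the direct observation that an intermediate salient $u^{k+1}$ would force a $q_p$-point $x \in (t^j, t^{j+1})$ contradicting Proposition~\ref{prop:nonsymmetric}. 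You instead run a contradiction through $h^{-1}$; this is the other natural route opened by the two-sided chain hypothesis $h(\chain_q) \preceq \chain_p \preceq h(\chain_g)$.

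There are, however, real gaps in your third paragraph beyond the two you acknowledge. The most serious one is unacknowledged: applying Propositions~\ref{prop:symmetric} and~\ref{prop:nonsymmetric} to $h^{-1}$ at $w$ and $\tilde u^i$ requires \emph{both} to be $p_g$-points, but $\tilde u^i$ is defined only as the midpoint of the $\chain_p$-arc-component $A_{h(t^i)}$, not as the midpoint of its extended arc-component with respect to $(\chain_p, \chain_g)$. These two notions coincide only once we already know $\tilde u^i$ is salient in $\A$ — which is exactly what you are trying to prove, so as written the argument is circular. The gaps you do acknowledge are likewise real: $\hat A_y$ could a priori spill over $u^0$, and $h^{-1}(\tilde u^i)$ may lie on the far side of $t^i$ within the same $\chain_g$-link, in which case $h^{-1}(w)$ and its $g$-midpoint $a$ need not lie in $(\rho, t^i)$ and the appeal to saliency of $t^i$ fails. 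The paper's forward argument avoids all of these because the two-sided $q$-symmetric arcs $B_j$ give clean control on both sides simultaneously.

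Two smaller remarks. Your bijection argument in the fourth paragraph is valid, but only because the relevant index sets are tails $\{N, N+1, \dots\}$ (every salient $q$-point $t^i$ of high enough index is automatically a $q_p$-point, and analogously on the $\A$ side); a strictly increasing bijection between arbitrary cofinite sets of positive integers need not be a shift, so the hypothesis should be stated as ``intervals,'' not merely ``cofinite.'' Once that is fixed, your index-matching step is actually a tidy alternative to the paper's contiguity argument. For the order-reversing case, the paper replaces $h$ by $h \circ \sigma$ (order-preserving) and reads off the formulas from $\sigma(t^i) = t^{-i}$, $\sigma(t^{-j}) = t^{j+1}$; your narrative about the $+1$ being caused by the asymmetric level convention is a correct intuition but not a derivation.
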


\begin{proof}
If $h : \Rr \to \A$ is order reversing, then $h \circ \sigma : \Rr \to \A$ is order preserving, and also
if the proposition works for $h \circ \sigma$, it works for $h$. Therefore we can assume without loss of
generality that $h$ is order preserving.

Let $j \in \None$, and let $B_j$ be the maximal $q$-symmetric arc with midpoint $t^j$. Since $s > \sqrt 2$,
$\rho \in B_j$. Therefore, for every $q_p$-point $x \in (\rho, t^j)$ there exists a $q_p$-point
$y \in (t^j, t^{j+1})$, such that the arc $[x, y]$ is $q$-symmetric with midpoint $t^j$ and
$L_q(x) = L_q(y)$. Let $u$ and $v$ be $p$-points such that $u \vdash h(x)$ and $v \vdash h(y)$.
By Proposition~\ref{prop:symmetric}, we have $L_p(u) = L_p(v)$. Note that for the midpoint $w$ of the arc
$[u, v]$ we also have $w \vdash h(t^j)$. This implies, by Remark~\ref{rem_basic} (a), that $L_p(w) > L_p(z)$
for every $z \in (u^0, w)$. Therefore, $w$ is a salient $p$-point, \ie $w \in (u^i)_{i \in \None}$.

Let $k, l \in \None$, $k < l$, be such that $u^k \vdash h(t^j)$ and $u^l \vdash h(t^{j+1})$. We want to
prove that $l = k+1$. Let us assume by contradiction that $l > k+1$. Since $L_p(u^{k+1}) > L_p(u^k)$, there
exists a $q_p$-point $x \in (t^j, t^{j+1})$ such that $u^{k+1} \vdash h(x)$. But $x \in (t^j, t^{j+1})$
implies $L_q(x) < L_q(t^j)$, contradicting Proposition~\ref{prop:nonsymmetric}.

In this way we have proved that $h(t^i) \approx u^{i+M}$ for some $M\in \Z$ and every $i \in \None$ with
$M+i > 0$. In an analogous way we can prove that $h(t^{-i}) \approx u^{-i-M'}$ for some $M' \in \Z$ and for
every $i \in \None$ with $M'+i > 0$.
\end{proof}

\begin{theorem}\label{thm:ray}
Every self-homeomorphism $h$ of $\ILcores$ preserves $\Rr$: $h(\Rr) = \Rr$.
\end{theorem}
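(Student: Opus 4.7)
The plan is to set $\A := h(\Rr)$ and prove $\A = \Rr$ by verifying the asymptotic $p$-level pattern of Lemma~\ref{lem:PR2} on $\A$. First I would reduce to the case that $h$ is order-preserving on $\Rr$: if not, replace $h$ by $h \circ \sigma$, using that $\sigma(\rho) = \rho$ forces $\sigma(\Rr) = \Rr$ with reversed orientation, so $h(\Rr) = (h \circ \sigma)(\Rr)$. Then Lemma~\ref{lem:some} provides $M, M' \in \Z$ with $h(t^i) \approx u^{M+i}$ for $M + i > 0$ and $h(t^{-j}) \approx u^{-M'-j}$ for $M' + j > 0$, identifying the salient structure of $\Rr$ with that of $\A$ up to index shifts.

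Next, by Lemma~\ref{lem:PR1} the salient $q$-levels of $\Rr$ are $L_q(t^i) = 2i-1$ (odd, for $i \geq 1$) and $L_q(t^{-j}) = 2j$ (even, for $j \geq 1$), together exhausting $\None$. Each such salient point is a $q_p$-point (being the midpoint of its extended arc-component), so Propositions~\ref{prop:symmetric} and \ref{prop:nonsymmetric} transfer the strict order of the $q$-levels to the strict order of the corresponding $p$-levels $L_p(u^{M+i})$ and $L_p(u^{-M'-j})$. This defines a strictly increasing map $f : \None \to \None$ on salient levels, where $f(2i-1) = L_p(u^{M+i})$ and $f(2j) = L_p(u^{-M'-j})$. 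To match the hypothesis of Lemma~\ref{lem:PR2}, I must upgrade this order embedding to a shift $f(n) = n + 2K$ for some $K \in \N0$.

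I would achieve this in two substeps. First, run the symmetric argument for $h^{-1} : \A \to \Rr$, with chains adapted to the reverse refinement ordering, to produce an inverse strictly increasing map $\phi : \None \to \None$ on salient levels; the composition $\phi \circ f$ must coincide with the identity up to an additive constant (arising from the shift between the chains used for $h$ and for $h^{-1}$), since $h^{-1} \circ h$ is the identity on $\Rr$. This forces both $\phi$ and $f$ to be pure shifts, so $f(n) = n + C$ for some $C \in \N0$. Second, verify $C$ is even: because $h$ is order-preserving, positive-side salient $q$-points of $\Rr$ map to positive-side salient $p$-points of $\A$ and likewise on the negative side, so the alternating parity pattern of salient levels along $\Rr$ (odd on the positive side, even on the negative) must be preserved in $\A$, which is possible only if $C = 2K$ is even.

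With $f(n) = n + 2K$ one obtains $L_p(u^{M+j}) = 2(K+j)-1$ and $L_p(u^{-M'-j}) = 2(K+j)$ for every $j \in \None$ (shifting $J = M$ and $J' = M'$ upward if needed so that they lie in $\N0$), which is precisely the hypothesis of Lemma~\ref{lem:PR2}; that lemma then yields $\A = \Rr$, proving the theorem. The main obstacle is the shift step of paragraph three: rigorously ruling out gaps in $f(\None)$ and establishing $f(n+1) - f(n) = 1$ eventually. This requires invoking the machinery of $q_p$-points and the propositions symmetrically for $h^{-1}$ under a second triple of chains refined compatibly with the original $h(\chain_q) \preceq \chain_p \preceq h(\chain_g)$, and is the technical heart where the Fibonacci-like chain constructions of Appendix~\ref{sec:chains} are indispensable.
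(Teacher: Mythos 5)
Your framework is the same as the paper's: reduce to the order-preserving case, track salient points through $h$ via Lemma~\ref{lem:some}, transfer level (in)equalities via Propositions~\ref{prop:symmetric} and~\ref{prop:nonsymmetric}, and finish by matching the hypothesis of Lemma~\ref{lem:PR2}. You diverge at precisely the step you flag as the ``main obstacle'': upgrading the strictly increasing salient-level map $f$ to a shift.

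The paper handles this differently and more cheaply. It first replaces $h$ by $\sigma^{1-a}\circ h$ with $a=L_p(u^{1+M})$, so that one may assume $L_p(u^1)=1$; this normalization removes the additive constant at the source and makes your parity step superfluous (Lemma~\ref{lem:PR2} tolerates any $K\in\N0$, and one can always flip the labeling of $\A$, so there is no parity constraint to verify). It then writes the interleaved salient $p$-levels as $1,\ 1+a_1,\ 1+a_1+b_1,\dots$ with $a_i,b_i\in\None$ and rules out $a_k>1$ (or $b_k>1$) by a direct contradiction: Proposition~\ref{prop:nonsymmetric} shows that no \emph{salient} $p$-point of $\A$ has level $L_p(u^k)+1$, and Proposition~\ref{prop:symmetric} then upgrades this to: no $p$-point of $\A$ at all has that level, contradicting density of $\A$ in $\ILcores$.

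Your $h^{-1}$-composition route has a circularity that the paper's density argument avoids. The map $\phi$ produced by applying Lemma~\ref{lem:some} to $h^{-1}:\A\to\Rr$ is defined on the \emph{salient $q'$-levels of $\A$}, and those are exactly the values taken by $f$ --- not, a priori, all of $\None$. To conclude from $\phi\circ f=\mathrm{id}+C$ that $f$ has no gaps, you need $\phi$ defined on every integer in each interval $(f(n),f(n+1))$; the assertion ``$\phi:\None\to\None$'' already presupposes that $\A$ has salient $p$-points of every level, which is what must be proved. A second issue: the proof of Lemma~\ref{lem:some} uses special geometry of $\Rr$ (maximal $q$-symmetric arcs $B_j$ containing $\rho$, exploiting $s>\sqrt2$), so re-running it for $h^{-1}$ with source $\A$ is not automatic before one knows $\A=\Rr$. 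Finally, the parity argument does not hold up as stated: a priori nothing forces $\A$'s positive-side salients to have odd level, so ``odd on the positive side, even on the negative'' is not a constraint that $\A$ is known to satisfy; if $C$ were odd the parities would simply swap, and one could still apply Lemma~\ref{lem:PR2} after reversing the orientation of $\A$. The paper's $\sigma^{1-a}$ normalization makes the whole question moot.
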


\begin{proof}
Let $h : \Rr \to \A$, as before. We want to prove that $\A = \Rr$. Note that $h \circ \sigma^i : \Rr \to \A$
and $\sigma^i \circ h : \Rr \to \sigma^i(\A)$ are homeomorphisms for every $i \in \Z$, and
$\sigma^i(\A) = \Rr$ if and only if $\A = \Rr$. By using $h^{-1}$ instead of $h$ if necessary, we can assume
that $M \ge 0$ (with $M$ as in Lemma~\ref{lem:some}). Also, instead of studying $h$, we can study
$\sigma^{1-a} \circ h : \Rr \to \sigma^{1-a}(\A)$, where $a = L_p(u^{1+M})$ (recall that
$h(t^1) \approx u^{1+M}$). Therefore, without loss of generality we can assume that $h(t^1) \approx u^1$ and
$L_p(u^1) = 1$. Recall that $L_q(t^1) = 1$, $L_q(t^{-1}) = 2$ and for every $i \in \None$,
$L_q(t^{-i}) - L_q(t^i) = L_q(t^{i+1}) - L_q(t^{-i}) = 1$. If
$L_p(u^{-i}) - L_p(u^{i}) = L_p(u^{i+1}) - L_p(u^{-i}) = 1$, then $\A = \Rr$ by Lemma~\ref{lem:PR2}.

Recall that $h(t^{-1}) \approx u^{-1-M'}$, where $M'$ is as in Lemma~\ref{lem:some}. Since
$$L_q(t^1) < L_q(t^{-1}) < L_q(t^2) < L_q(t^{-2}) < \cdots,$$ by Proposition~\ref{prop:nonsymmetric} we have
$$1 = L_p(u^1) < L_p(u^{-1-M'}) < L_p(u^2) < L_p(u^{-2-M'}) < \cdots < L_p(u^n) < L_p(u^{-n-M'}) < \cdots.$$
Let $L_p(u^n) = 1 + a_1 + b_1 + \cdots + a_{n-1} + b_{n-1}$ and
$L_p(u^{-n-M'}) = 1 + a_1 + b_1 + \cdots + a_{n-1} + b_{n-1} + a_n$, for every $n \in \None$ and some
$a_1, \dots , a_n, b_1, \dots , b_{n-1} \in \None$. We want to prove that $a_i = b_i = 1$ for every
$i \in \None$.

Assume by contradiction that $k \in \None$ is the smallest integer with $a_i = b_i = 1$ for all
$i < k$ and $a_k > 1$. Then, by Proposition~\ref{prop:nonsymmetric}, there is no salient $p$-point
$u \in (u^i)_{i \in \Z}$ with $L_p(u) = L_p(u^k) + 1$. Thus, Proposition~\ref{prop:symmetric} implies that
$\A$ does not contain any $p$-point with $p$-level $L_p(u^k) + 1$, contradicting that $\A$ is dense in
$\ILcores$ in both directions. If $k \in \None$ is the smallest integer with $a_i = b_i = 1$ for all
$i < k$, $a_k = 1$ and $b_k > 1$, the proof follows in an analogous way.
\end{proof}

\begin{remark}\label{rem:M}
If $h$ is order-preserving, then by proof of Theorem \ref{thm:ray} we have $M' = M$, where $M$ and $M'$ are
as in Lemma~\ref{lem:some}. Also, by Lemma~\ref{lem:PR1}, Lemma~\ref{lem:some} and Theorem~\ref{thm:ray} we
have $L_p(u^{i+M}) = 2(i+M)-1 = (2i-1) + 2M = L_q(t^i) + 2M$ for $i > 0$ and
$L_p(u^{i-M}) = 2(-i+M) = -2i + 2M = L_q(t^i) + 2M$ for $i < 0$. Moreover, by Proposition \ref{prop:symmetric},
for every $q_p$-point $x$, and for the $p$-point $u$ with $u \vdash h(x)$, we have $L_p(u) = L_q(x) + 2M$.
\end{remark}

We finish with the

\begin{proof}[Proof of Theorem~\ref{mainthm}]
Let $1 \leq s \le \sqrt 2 < s' \leq 2$. Then $\ILcores$ is decomposable, $\ILcoresp$ is indecomposable,
and the proof follows.

Since Lemmas 2.1 and 2.2 of \cite{BBS} show how to reduce the case $1 \leq s < s' \leq \sqrt 2$ to the
case $\sqrt 2 < s < s' \leq 2$, it suffices to prove the latter case.

Let $\sqrt 2 < s < s' \leq 2$.
Suppose that there exists a homeomorphism $h : \ILcoresp \to \ILcores$. Let $r':=\frac{s'}{s'+1}$ be
the positive fixed point of $T_{s'}$ and $\rho':= (\dots, r', r', r') \in C_{s'} = \ILcoresp$. Let
$\Rr'$ denote the arc-component containing $\rho'$. Let $r$, $\rho$ and $\Rr$ be the analogous
objects of $C_{s} = \ILcores$, as before. Take $q, p \in \N0$ such that $h(\chain_q) \prec \chain_p$. Let
%$E_q^{\Rr'} = \{ \dots, x^{-2}, x^{-1}, x^0, x^1, x^2, \dots \}$ be the ordered set of all $q$-points of $\Rr'$, where $x^0 = \rho'$, and let
$(t^i)_{i \in \Z}$ be the sequence of salient $q$-points of $\Rr'$ with $t^0 = \rho'$.
%Let $E_p^{\Rr} = (y^i)_{i \in \Z} \subseteq \Rr$ be the set of all $p$-points of $\Rr$ such that $y^0 = h(\rho')$.
Let $(u^i)_{i \in \Z}$ be the sequence of salient $p$-points of $\Rr$.

Let $f = h^{-1} \circ \sigma \circ h$, and assume by contradiction that $h(\Rr') = \A \ne \Rr$. Since $\Rr$
is the only arc-component in $\ILcores$ that is fixed by $\sigma$, we have $\sigma(\A) \ne \A$ implying
$f(\Rr') \ne \Rr'$. But this contradicts Theorem~\ref{thm:ray}. Therefore $h(\Rr') = \Rr$.

We want to prove that $FP(\Rr') = FP(\Rr)$. Without loss of generality we suppose that $h$ is order-preserving
and that $M > 0$ (with $M$ as in Remark \ref{rem:M}).

{\bf Claim 1:} Let $l \in \None$ and let $x$ be a $q$-point with $L_q(x) = l$. Then
$u := h(x) \in \ell_p^{u^{l+2M}}$ and the arc component $A_u \subset \ell_p^{u^{l+2M}}$ containing $u$, also
contains a $p$-point $y$ such that $L_p(y) = l + 2M$.

Note that Claim 1 is the same as Proposition 4.2 (1) of \cite{BBS}. The proof is analogous:

By Remark \ref{rem:M}, Claim 1 is true for all salient $q$-points and for all $q_p$-points. Note that there
exists $j \in \None$ such that every $q$-point $x \in [t^{-j}, t^j]$ is also a $q_p$-point. Therefore Claim 1
is true for all  $q$-points $x \in [t^{-j}, t^j]$, i.e., for every $q$-point $x \in [t^{-j}, t^j]$ the
arc-component $A_{h(x)}$ containing $h(x)$, also contains a $p$-point $y$ such that $L_p(y) = L_q(x) + 2M$.
Also $h([t^{-j}, t^j]) = [a_{-j},a_j]$, $u^{-j-2M} \in A_{a_{-j}}$ and $u^{j+2M} \in A_{a_j}$. Let $q$-point
$x_1 \in [t^{-j}, t^j]$ be such that the open arc $(x_1,t^{j+1})$ is $q$-symmetric with midpoint $t^j$. Such
$x_1$ exists since $L_q(t^{j+1}) - L_q(t^j) = 2$ and $L_q(t^{-j}) - L_q(t^j) = 1$. Then $h((x_1, t^{j+1}))$
is $p$-link-symmetric with midpoint $u^{j+2M}$. Since there exists a unique $p$-point $b_1$ such that the
open arc $(b_1, u^{j+1+2M})$ is $p$-symmetric with midpoint $u^{j+2M}$, for every $q$-point
$x' \in (t^j, t^{j+1})$ the arc-component $A_{h(x')}$ containing $h(x')$, also contains a $p$-point $y'$ such
that $L_p(y') = L_p(y) = L_q(x) + 2M = L_q(x') + 2M$, see Figure \ref{fig:levels}.

\begin{figure}[ht]
\unitlength=10mm
\begin{picture}(15,6)(0,1)
\put(0,6){$\dots \ t^{-j-1} \dots \dots \ {\bf t^{-j}} \dots \dots \ x_{-1} \dots \ x_1 \dots \ x \ \dots \ {\bf t^j} \dots \ x' \dots \ t^{j+1} \dots$}
\put(3,6.5){$\overbrace{\qquad \qquad \qquad \qquad \qquad \qquad \quad \ }$}
\put(1.8,5.8){$\underbrace{\qquad \qquad \quad \quad \ }_{\textrm{$q$-symmetric}}$}
\put(7.4,5.6){$\underbrace{\qquad \qquad \quad \, }_{\textrm{$q$-symmetric}}$}
\put(6.7,4.7){\vector(1,-2){0.7}}\put(6.6,3.8){$h$}
\put(0,2){$\dots \ u^{-j-1-2M} \dots \dots \ {\bf u^{-j-2M}} \dots \dots \ b_{-1} \ \dots \ b_1 \dots \ y \dots \, {\bf u^{j+2M}} \dots \ y' \dots \, u^{j+1+2M} \dots$}
\put(3.8,2.5){$\overbrace{\qquad \qquad \qquad \qquad \qquad \qquad \qquad \quad }$}
\put(2.5,1.8){$\underbrace{\qquad \qquad \qquad \quad \quad \ \ }_{\textrm{$p$-symmetric}}$}
\put(9,1.65){$\underbrace{\qquad \qquad \quad \quad \ \ }_{\textrm{$p$-symmetric}}$}
\end{picture}
\caption{The configuration of symmetric arcs.
\label{fig:levels}}
\end{figure}
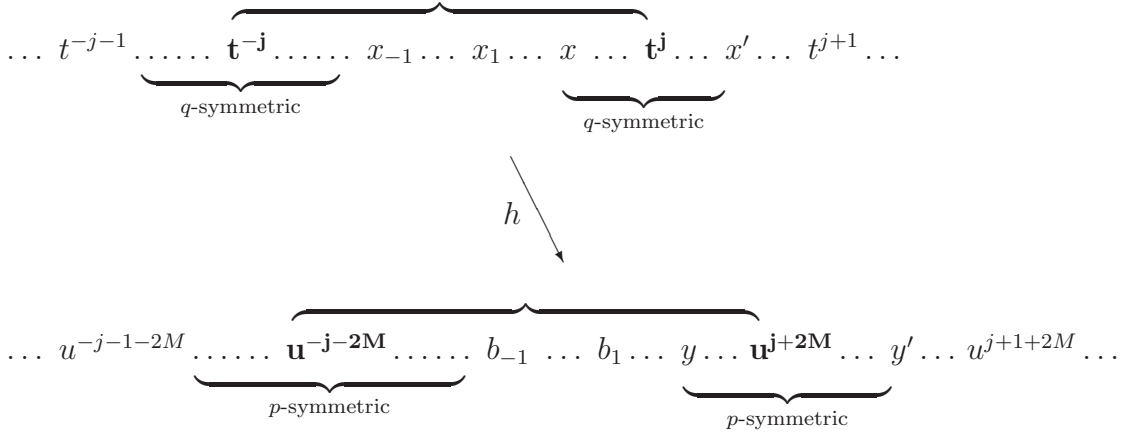

Let us consider now the arc $h([t^{-j-1}, t^{j+1}]) = [a_{-j-1},a_{j+1}]$, $u^{-j-1-2M} \in A_{a_{-j-1}}$ and
$u^{j+1+2M} \in A_{a_{j+1}}$. Let the $q$-point $x_{-1} \in [t^{-j}, t^{j+1}]$ be such that the open arc
$(t^{-j-1},x_{-1})$ is $q$-symmetric with midpoint $t^{-j}$. Such $x_{-1}$ exists since
$L_q(t^{-j-1}) - L_q(t^{-j}) = 2$ and $L_q(t^{j+1}) - L_q(t^{-j}) = 1$. Therefore $h((t^{-j-1},x_{-1}))$ is
$p$-link-symmetric with midpoint $u^{-j-2M}$. Since there exists a unique $p$-point $b_{-1}$ such that the open
arc $(u^{-j-1-2M}, b_{-1})$ is $p$-symmetric with midpoint $u^{-j-2M}$, for every $q$-point
$x'' \in (t^{-j-1}, t^{-j})$ the arc-component $A_{h(x'')}$ containing $h(x'')$, also contains a $p$-point
$y''$ such that $L_p(y'') = L_q(x'') + 2M$, as before. The proof of Claim 1 follows by induction.

{\bf Claim 2:} For $l \in \None_0$ and $i \in \None$, the number of $q$-points in $[t^{-i}, t^i]$ with
$q$-level $l$ is the same as the number of $p$-points in $[u^{-i-2M}, u^{i+2M}]$ with $p$-level $l+2M$.

Claim 2 is the same as Proposition 4.2 (2) of \cite{BBS}. The proof is very similar and we omit it.

Claims 1 and 2 show that
$$
FP_q([t^{-i}, t^i]) = FP_{p+2M}([u^{-i-2M},u^{i+2M}]) = FP_p([u^{-i},u^i]),
$$
for every positive integer $i$, and therefore $FP(\Rr') = FP(\Rr)$.

This proves the Ingram Conjecture for cores of the Fibonacci-like inverse limit spaces.
\end{proof}

\appendix
\section{The Construction of Chains}\label{sec:chains}

We turn now to the technical part, \ie the construction of special  chains that will eventually allow us to show that symmetric arcs map to  symmetric arcs
(see Proposition~\ref{prop:symmetric}).

As mentioned before, we will work with the chains which are
the $\pi_p^{-1}$ images of chains of the interval $[0,s/2]$.
More precisely, we will define a finite collection of points
$G = \{ g_0, g_1, \dots, g_N \} \subset [0, s/2]$ such that
$|g_m - g_{m+1}| \leq s^{-p} \eps/2$ for all $0 \leq m < N$
and $|0-g_0|$ and $|s/2-g_N|$ positive but very small.
>From this one can make a chain $\chain = \{ \ell_n \}_{n = 0}^{2N}$
by setting
\begin{equation}\label{eq:links}
\left\{ \begin{array}{lll}
\ell_{2m+1} = \pi_p^{-1}( (g_m,g_{m+1}) ) & \qquad  & 0 \leq m < N,\\
\ell_{2m} = \pi_p^{-1}( (g_m-\delta, g_m+\delta) \cap [0,s/2] )& & 0 \leq m \leq N,
\end{array} \right.
\end{equation}
where $\min\{ |0-g_0|, |s/2-g_N|\} < \delta \ll \min_m \{ |g_m - g_{m+1}| \}$.
Any chain of this type has links of diameter $< \eps$.

\begin{remark}\label{rem:p_points_in_links}
We could have included all the points $\cup_{j \leq p} T_s^{-j}(c)$
in $G$ to ensure that $T_s^p|_{(g_m, g_{m+1})}$ is monotone for each $m$,
but that is not necessary.
Naturally, there are chains of $\IL$ that are not of this form.
%but our choice allows us to work with these specific $p$-chains $\chain_p$, and

For a component $A$ of $\C0 \cap \ell$, we have the following two possibilities:
\\
\indent (i) $\C0$ goes straight through $\ell$ at $A$, \ie $A$ contains no $p$-point
and $\pi_p(\bd A) = \bd \pi_p(\ell)$; in this case $A$ enters and exits $\ell$ from different sides.
\\
\indent (ii) $\C0$ turns in $\ell$: $A$ contains (an odd number of) $p$-points
$x^0, \dots, x^{2n}$ of which the middle one $x^n$ has the highest $p$-level,
and  $\pi_p(\bd A)$ is a single point in $\bd \pi_p(\ell)$,
in this case $A$ enters and exits $\ell$ from the same side.
\end{remark}

Before giving the details of the $p$-chains we will use, we need a lemma.

\begin{lemma}\label{lem:linkDn}
If the kneading map $Q$ of $T_s$ is eventually non-decreasing and
satisfies Condition \eqref{eq:cond3},
then for all $n \in \None$ there are arbitrarily small numbers $\eta_n > 0$
with the following property:
If $n' > n$ is such that $n \in \orb_\beta(n')$, then either
$|c_{n'} - c_n| > \eta_n$ or
$|c_{n''} - c_n| < \eta_n$ for all $n \leq n'' \leq n'$ with
$n'' \in \orb_\beta(n')$.
\end{lemma}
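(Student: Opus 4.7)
My approach exploits the nested sub-interval structure along a $\beta$-orbit $n'=b_0>b_1>\dots>b_k=n$ (so $b_{j+1}=\beta(b_j)$ and $\Dlev_{b_0}\subset\Dlev_{b_1}\subset\dots\subset\Dlev_{b_k}=\Dlev_n$). Each $\Dlev_{b_j}$ has endpoints $c_{b_j},c_{b_{j+1}}$, and the smaller $\Dlev_{b_{j-1}}$ shares the endpoint $c_{b_j}$ with $\Dlev_{b_j}$; in order to fit inside $\Dlev_{b_j}$, the remaining endpoint $c_{b_{j-1}}$ must lie on the same side of $c_{b_j}$ as $c_{b_{j+1}}$. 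Iterating this orientation rule arranges $c_{b_{k-1}},c_{b_{k-2}},\dots,c_{b_0}$ in a Cantor-like nested configuration inside $\Dlev_n$, all on the same side of $c_n$, with $c_{b_{k-1}}$ the outermost. In particular, $\max_{0\le j\le k-1}|c_{b_j}-c_n|=|c_{b_{k-1}}-c_n|=|\Dlev_{b_{k-1}}|$.

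\textbf{Choice of $\eta_n$.} Since $b_{k-1}\in\beta^{-1}(n)$, the attainable values of this maximum lie in $\{|\Dlev_m|:m\in\beta^{-1}(n)\}$. Ordering these decreasingly as $\alpha_1>\alpha_2>\cdots$, one has $\alpha_j\to 0$ by Lemma~\ref{lem:Qinfty}. Given a prescribed size $\eps>0$, I choose $j$ with $\alpha_j<\eps$ and pick $\eta_n\in(\alpha_{j+1},\alpha_j)$, perturbing slightly if necessary to avoid the countable set $\{|c_{n'}-c_n|:n'>n,\ n\in\orb_\beta(n')\}$ so that no equality case arises.

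\textbf{Verification and main obstacle.} Fix a $\beta$-orbit reaching $n$ with $b_{k-1}=m_{i_0}$ (so $|\Dlev_{b_{k-1}}|=\alpha_{i_0}$). If $i_0\ge j+1$, then $|c_{b_i}-c_n|\le\alpha_{i_0}\le\alpha_{j+1}<\eta_n$ for every $0\le i\le k-1$, which is the second alternative. If $i_0\le j$, then $\alpha_{i_0}\ge\alpha_j>\eta_n$ and I must prove $|c_{n'}-c_n|>\eta_n$. For $k\ge 3$ the nested ordering gives $|c_{n'}-c_n|>|c_{b_{k-2}}-c_n|$, and since the truncated orbit $b_{k-2}\to b_{k-1}\to n$ has length $2$, the base case below bounds $|c_{b_{k-2}}-c_n|$ from below and the general case reduces to $k=2$. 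The base case is the heart of the argument and is the main obstacle: when $k=2$, $|c_{n'}-c_n|=\alpha_{i_0}-|\Dlev_{n'}|$, so one needs a quantitative bound preventing $|\Dlev_{n'}|$ from approaching $\alpha_{i_0}$. This is precisely where Condition~\eqref{eq:cond4} (and its consequence \eqref{eq:cond3}) enters through the factor $\tfrac12$ in Lemma~\ref{lem:order}: it forces a multiplicative gap between $|\Dlev_{n'}|$ and $\alpha_{i_0}$. Taking $j$ large enough that $\alpha_{j+1}$ lies safely below this $\tfrac12$-type bound then makes $\alpha_{i_0}-|\Dlev_{n'}|>\eta_n$ uniformly for all $n'\in\beta^{-1}(m_{i_0})$ and all $i_0\le j$, completing the verification.
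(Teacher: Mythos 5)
You carry out a genuinely useful reduction that the paper does not make explicit: along a $\beta$-orbit $n=b_k<\dots<b_0=n'$ the levels $\Dlev_{b_{k-1}}\supset\Dlev_{b_{k-2}}\supset\dots$ nest with alternating endpoints, so all $c_{b_i}$ with $i<k$ lie in $\Dlev_{b_{k-1}}$, the extreme point $c_{b_{k-2}}$ is the one closest to $c_n$, and hence the whole problem reduces to the two-step case $b_{k-2}\to b_{k-1}\to n$. That part is correct.

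The gap is exactly where you flag it — the base case — and it is a real gap, not just a detail to fill. You assert that the factor $\tfrac12$ in Lemma~\ref{lem:order} forces a multiplicative gap between $|\Dlev_{n'}|$ and $|\Dlev_{\beta(n')}|$ for $\beta$-children $n'$. But Lemma~\ref{lem:order} bounds distances $|c-c_{S_k}|$ of the critical orbit to $c$ at \emph{cutting} times; it says nothing about $|\Dlev_{n'}|/|\Dlev_{\beta(n')}|$ for arbitrary Hofbauer levels. If one pulls the two-step configuration back to the critical point via $\sigma^{-L}$, the relevant ratio becomes a comparison of a critical-orbit distance $|c-c_{S_{k^*}}|$ against a precritical-point distance $|\zeta_j-c|$, two a priori unrelated scales which Condition~\eqref{eq:cond4} does not couple. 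Neither the paper nor the cited lemma supplies such a uniform bound, and building it is not a corollary of the $\tfrac12$ inequality. Moreover, even admitting a hypothetical constant $c_0<1$, your choice $\eta_n\in(\alpha_{j+1},\alpha_j)$ must in addition satisfy $\eta_n<\gamma_{b_1^{(i)}}$ for all $i\le j$, which by your own estimate requires $\alpha_{j+1}<c_0\alpha_j$; that is yet another rate hypothesis on how fast $\beta^{-1}(n)$-levels shrink, and it is not established either.

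The paper's actual argument does not produce any global ratio bound. Instead it analyses the ``linked'' configuration of Figure~5: a chain $\Dlev_{m_1}\supset\Dlev_{m_2}\supset\dots$ in $\beta^{-1}(n)$ together with children $\Dlev_{k_i}$, $\beta(k_i)=m_i$, whose endpoints $c_{k_i}$ land inside the next $\Dlev_{m_{i+1}}$ — which is precisely the scenario in which your bad intervals $\bigl[\,|\Dlev_{b_1}|-\sup_{n'}|\Dlev_{n'}|,\ |\Dlev_{b_1}|\,\bigr]$ overlap and no $\eta_n$ can be interposed. The paper then locates the precritical point $a$ of lowest order $r$ in $\Dlev_{m_1}$, considers four cases for where $a$ sits (Figure~6), applies $T^r$ or $T^{r+1}$, and derives a contradiction from the admissibility condition \eqref{eq:Hofbauer} or from Conditions~\eqref{eq:cond3}/\eqref{eq:cond4} after at most two links; $c_n-\eta_n$ is then placed just past the last $c_{k_i}$. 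That case analysis, not a size ratio, is what terminates the chain, and nothing in your sketch substitutes for it.
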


To clarify what this lemma says, Figure~\ref{fig:toavoid}
shows the configuration of levels $\Dlev_k$ that should be avoided, because then
$\eta_n$ cannot be found.

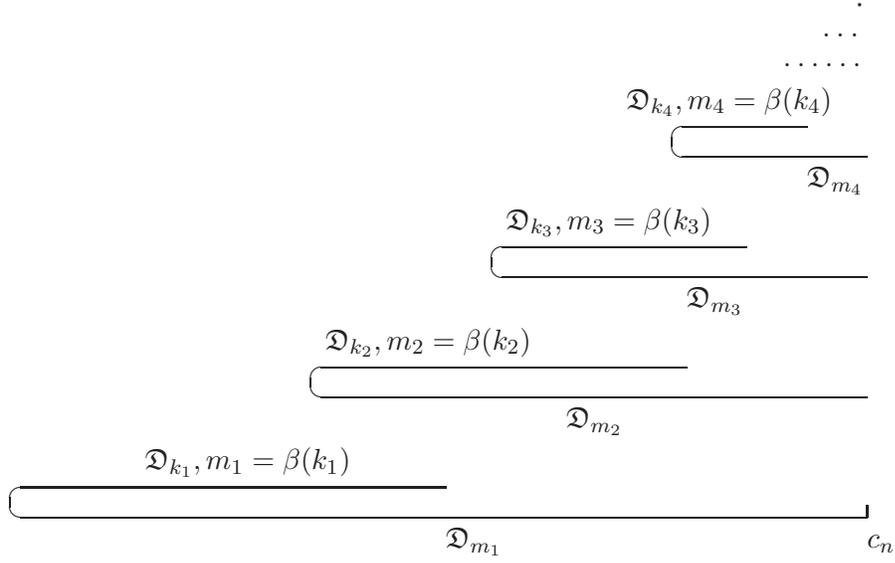
\begin{figure}[ht]
\unitlength=8mm
\begin{picture}(12,9.5)(0,0)
\put(0, 1.25){\oval(0.5,0.5)[l]}
\put(5, 3.25){\oval(0.5,0.5)[l]}
\put(8, 5.25){\oval(0.5,0.5)[l]}
\put(11, 7.25){\oval(0.5,0.5)[l]}
\put(0, 1){\line(1,0){14}}  \put(7, 0.5){\small $\Dlev_{m_1}$}
\put(14, 1){\line(0,1){0.2}} \put(14, 0.5){\small $c_n$}
\put(0, 1.5){\line(1,0){7}}  \put(2, 1.8){\small $\Dlev_{k_1}, m_1 = \beta(k_1)$}
\put(5, 3){\line(1,0){9}}  \put(9, 2.5){\small $\Dlev_{m_2}$}
\put(5, 3.5){\line(1,0){6}}  \put(5, 3.8){\small $\Dlev_{k_2}, m_2 = \beta(k_2)$}
\put(8, 5){\line(1,0){6}}  \put(11, 4.5){\small $\Dlev_{m_3}$}
\put(8, 5.5){\line(1,0){4}}  \put(8, 5.8){\small $\Dlev_{k_3}, m_3 = \beta(k_3)$}
\put(11, 7){\line(1,0){3}}  \put(13, 6.5){\small $\Dlev_{m_4}$}
\put(11, 7.5){\line(1,0){2}}  \put(10, 7.8){\small $\Dlev_{k_4}, m_4 = \beta(k_4)$}
 \put(12.6, 8.5){$\dots\dots$}
 \put(13.25, 9){$\dots$}
 \put(13.8, 9.4){$\cdot$}
\end{picture}
\caption{Linking of levels $\Dlev_{m_i}$ with $\beta(m_1) = \beta(m_2) = \beta(m_3) = \dots = n$. The semi-circles indicates that two intervals have an endpoint in common.}
\label{fig:toavoid}
\end{figure}

\begin{proof}
We will show that the pattern in Figure~\ref{fig:toavoid}
(namely with $c_{m_1} < c_{m_2} < c_{m_3} < \dots$ and $c_{m_{i-1}} < c_{k_i}$
for each $i$) does not continue indefinitely.
To do this, we redraw the first few levels from
Figure~\ref{fig:toavoid}, and discuss
four positions in $\Dlev_{m_1}$ where the precritical point $T_s^{-r}(c)  \in \Dlev_{m_1}$
of lowest order $r$ could be, indicated by points $a_1, \dots, a_4$, see Figure~\ref{fig:toavoid2}.
\begin{figure}[ht]
\unitlength=8mm
\begin{picture}(12,6.2)(0,0)
\put(0, 1.25){\oval(0.5,0.5)[l]}
\put(5, 3.25){\oval(0.5,0.5)[l]}
\put(8, 5.25){\oval(0.5,0.5)[l]}
\put(0, 1){\line(1,0){14}}  \put(4, 0.5){\small $\Dlev_{m_1}$}
%\put(14, 0.9){\line(0,1){0.2}}
\put(14,1){\circle*{0.15}}\put(14, 0.5){\small $c_n$}
\put(-0.25,1.25){\circle*{0.15}}\put(-0.5, 0.5){\small $c_{m_1}$}
\put(7,1.5){\circle*{0.15}}\put(6.8, 1.8){\small $c_{k_1}$}
\put(0, 1.5){\line(1,0){7}}  \put(2, 1.8){\small $\Dlev_{k_1}, m_1 = \beta(k_1)$}
\put(5, 3){\line(1,0){9}}  \put(9, 2.5){\small $\Dlev_{m_2}$}
\put(14,3){\circle*{0.15}}\put(14, 2.5){\small $c_n$}
\put(4.75,3.25){\circle*{0.15}}\put(4.5, 2.5){\small $c_{m_2}$}
\put(11,3.5){\circle*{0.15}}\put(10.8, 3.75){\small $c_{k_2}$}
\put(5, 3.5){\line(1,0){6}}  \put(5.5, 3.8){\small $\Dlev_{k_2}, m_2 = \beta(k_2)$}
\put(8, 5){\line(1,0){6}}  \put(11, 4.5){\small $\Dlev_{m_3}$}
\put(8, 5.5){\line(1,0){4}}  \put(8, 5.8){\small $\Dlev_{k_3}, m_3 = \beta(k_3)$}
\put(14,5){\circle*{0.15}}\put(14, 4.5){\small $c_n$}
\put(7.75,5.25){\circle*{0.15}}\put(7.5, 4.5){\small $c_{m_3}$}
\put(12,5.5){\circle*{0.15}}\put(11.8, 5.75){\small $c_{k_3}$}
\put(1, 0.5){\line(0,1){1.5}} \put(0.9, 0){\small $a_1$}
\put(6, 0.5){\line(0,1){2.7}} \put(5.9, 0){\small $a_2$}
\put(7.5, 0.5){\line(0,1){2.7}} \put(7.4, 0){\small $a_3$}
\put(13, 0.5){\line(0,1){5}} \put(12.9, 0){\small $a_4$}
\end{picture}
\caption{Linking of levels $\Dlev_{m_i}$, $i = 1,2,3$ and three possible positions of
the precritical point $a_j = T^{-r}_s(c) \in \Dlev_{m_1}$ of lowest order $r$.}
\label{fig:toavoid2}
\end{figure}
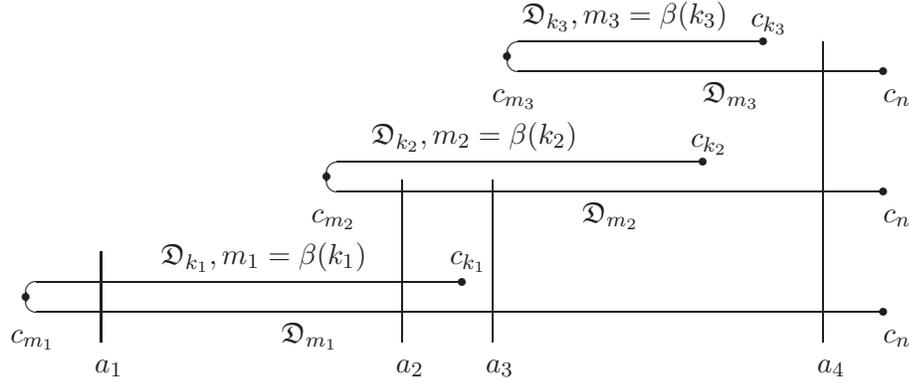

{\bf Case $a_1 \in (c_{m_1}, c_{m_2})$:}
Take the $r+1$-th iterate of the picture, which moves
$\Dlev_{m_1}$ and $\Dlev_{k_1}$ to levels with lower endpoint
$c_1$. then we can repeat the argument, until we
arrive in one of the cases below.

{\bf Case $a_2 \in (c_{m_2}, c_{k_1})$:}
Take the $r$-th iterate of the picture, which moves
$\Dlev_{m_1}$, $\Dlev_{k_1}$, $\Dlev_{m_2}$ and $\Dlev_{k_2}$ all to
cutting levels
and $c_{r+k_2} \in (c, c_{r+k_3})$.
But $m_2 > m_1$, whence $k_2 > k_1$, and this contradicts that
$|c_{S_{k_2}} - c| < |c_{S_{k_1}} - c|$.
%The same argument would hold if
(If $a_2 \in (c_{m_3}, c_{k_2})$, then the same argument would give
that $r+k_2 < r+k_3$ are both cutting times,
but $|c-c_{r+k_2}| < |c-c_{r+k_3}|$.)

{\bf Case $a_3 \in (c_{k_1}, c_{m_3})$:} Take the  $r$-th iterate of the picture, which moves
$\Dlev_{m_1}$, $\Dlev_{m_2}$ and $\Dlev_{k_2}$ to cutting levels, and $\Dlev_{m_3}$ to a
non-cutting level $\Dlev_u$ with $u := m_3+r$ such that
\[
 S_j := n+r =  \beta(u) = \beta(m_2+r) = \beta^2(k_2+r).
\]
The integer $u$ such that $c_u$ is closest to $c$ is for
$u = S_i + S_j$ where $j$ is minimal such that $Q(i+1) > i$,
and in this case, the itineraries of $T_s(c)$ and $T_s(c_u)$ agree for
at most $S_{Q^2(i+1)+1}-1$ iterates (if $Q(i+1) = j+1$)
or at most $S_{Q(j+1)}-1$ iterates (if $Q(i+1) > j+1$).
Call $S_h := k_2+r$, then $j = Q^2(h)$ and the itineraries
of $T_s(c_{S_h})$ and $c$ agree up to $S_{Q(h+1)}-1$ iterates.
By assumption \eqref{eq:cond3}, we have
\[
Q(j+1) \leq Q^2(i+1)+1 = Q(j+1)+1 = Q(Q^2(h)+1)+1 < Q(h+1),
\]
but this means that $\Dlev_u$ and $\Dlev_{S_h}$ cannot overlap, a contradiction.

{\bf Case $a_4 \in (c_{k_2}, c_n)$:} Then take the $r+1$-st iterate of the picture, which has
the same structure, with $c_n$ replaced by $T_s^{r+1}(a_1) = c_1$.
Repeating this argument, we will eventually arrive at Case $a_2$ or
$a_3$ above.

Therefore we can find $\eta_n$ such that $c_n - \eta_n$ separates
$c_n$ from all levels $\Dlev_{k_i}$, $\beta^2(k_i) = n$
that intersect $\Dlev_{m_1}$. Indeed, in Case $a_2$, we place
$c_n-\eta_n$ just to the right of $c_{k_1}$ and in Case $a_3$ (and hence
$c_{k_1} \in \Dlev_{k_2}$), we place
$c_n-\eta_n$ just to the right of $c_{k_2}$.
\end{proof}
\begin{proposition}\label{prop:chains}
Under the assumption of Lemma~\ref{lem:linkDn}, given $\eps > 0$,
there exists $p \in \None$ and a chain $\chain = \chain_p$ of $\IL$
with the following properties:
\begin{enumerate}
\item The links of $\chain$ have diameter $< \eps$.
\item For each $n \in \None$, there is exactly one link $\ell \in {\chain}$
such that every $x \in \IL$ that $p$-turns at $c_n$ belongs to $\ell$.
\item If $y \in \ell$ is a $p$-point not having the lowest
  $p$-level of $p$-points in $\ell$, then
both $\beta$-neighbors of $y$ belong to $\ell$.
\item If $y \nin \ell$ is a $\beta$-neighbor of $x$ above, then the
other $\beta$-neighbor of $y$ either lies outside $\ell$, or has $p$-level $n$
as well.
\end{enumerate}
\end{proposition}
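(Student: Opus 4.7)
The strategy is to define a finite grid $G = \{g_0, \dots, g_N\} \subset [0,s/2]$ from which $\chain_p$ arises via formula \eqref{eq:links}. The grid points will be placed carefully near the postcritical orbit, using the quantities $\eta_n$ from Lemma~\ref{lem:linkDn} to handle the clustering of $\beta$-orbits.

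First, I would choose $p$ large enough that the mesh condition $|g_m - g_{m+1}| \le s^{-p}\eps/2$ on $G$ implies $d$-diameter less than $\eps$ for the induced links in $\IL$; this uses that $d$ weighs coordinate $-i$ by $2^{-i}$ while $\pi_p$-fibers can spread by factors of $s^{p-i}$ under $T_s^{p-i}$, and the relevant geometric sums converge for $s\le 2$. I would also take $p$ large enough that Lemma~\ref{lem:linkDn} applies to every index $n$ used in the construction. Then, for each relevant index $n$ (essentially those labeling cutting times or postcritical values visible at scale $s^{-p}\eps$), I would include in $G$ points just outside the $\eta_n$-neighborhood of $c_n$. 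By Lemma~\ref{lem:linkDn}, this realizes a dichotomy: for every $n'$ with $n\in\orb_\beta(n')$, either $c_{n'}$ sits at distance greater than $\eta_n$ from $c_n$, or all intermediate $c_{n''}$ on the $\beta$-path from $n'$ down to $n$ lie within $\eta_n$ of $c_n$. Further grid points, chosen generically to avoid the selected $c_n$ and respect the $\eta_n$-boundaries, are then inserted to realize the mesh condition, and $\delta$ is chosen positive but smaller than the minimum distance from any selected $c_n$ to any $g_m$.

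Once $\chain_p$ is specified I would verify the four properties in turn. Property (1) is immediate from the mesh condition. For (2), each $c_n$ either coincides with a grid point (and so belongs only to a single $\ell_{2m}$) or lies at distance greater than $\delta$ from every $g_m$ (and so belongs to a single interior link $\ell_{2m+1}$); in either case every $x\in\IL$ with $\pi_p(x)=c_n$ is in exactly one link. For (3) and (4), the key observation is that if $y\in\ell$ is a $p$-point of $p$-level $L$, then $\pi_p(y)=c_L$ and the $\pi_p$-images of its $\beta$-neighbors are $c_n$'s with $n$ in $\orb_\beta$-relation to $L$. When $\ell$ contains another $p$-point of smaller $p$-level, the dichotomy from Lemma~\ref{lem:linkDn} forces all such neighbor-images onto the ``cluster'' side of the grid near $c_L$, hence into the same link as $y$; this gives (3). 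For (4), the contrapositive applies: if a $\beta$-neighbor of $y$ lies outside $\ell$, Lemma~\ref{lem:linkDn} forces its companion to lie on the separated side of the grid as well, unless that companion has $p$-level $n$ of the stated form.

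The main obstacle is to orchestrate the grid placement so that (3) and (4) hold simultaneously at every link and every $p$-point of every arc visiting it. The subtlety lies in the nested structure $\Dlev_n \subset \Dlev_{\beta(n)}$ and the potential for $\beta$-iterates to alternate across grid boundaries: Lemma~\ref{lem:linkDn} has been engineered, via Condition~\eqref{eq:cond3} coming from the Fibonacci-like hypothesis, precisely to rule out the offending configuration depicted in Figure~\ref{fig:toavoid}, and this is exactly what makes the grid-construction dichotomy available. The delicate bookkeeping is to check that no ``bad'' $\beta$-cascade sneaks past the chosen $\eta_n$-boundaries, and this is where the full strength of the Fibonacci-like combinatorics is needed.
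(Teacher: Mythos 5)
The proposal is correct and takes essentially the same approach as the paper: build $\chain_p$ from a grid $G$ via formula \eqref{eq:links}, refine $G$ inductively by inserting boundary points at distance $\eta_n$ from each $c_n$ (with $\eta_n$ supplied by Lemma~\ref{lem:linkDn}), and use the resulting dichotomy on nested levels $\Dlev_{n'}$ with $n\in\orb_\beta(n')$ to verify properties (2)--(4). The student is a bit looser than the paper about exactly which side of $c_n$ to place the boundary (the paper inserts one point on the $\Dlev_n$ side only, depending on whether $c_n$ is a local max or min of $T_s^n$) and about the convention that no grid point lies on the critical orbit, but these are matters of polish rather than of substance.
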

\begin{proof}
We will construct the chain $\chain$ as outlined in the beginning
of this section, see \eqref{eq:links}.
So let us specify the collection $G$ by starting with at least
$\lceil 2s^p/\eps \rceil$ approximately equidistant points $g_m \in [0,s/2]$ so that no $g_m$ lies
on the critical orbit,
and then refining this collection inductively
to satisfy parts 2.-4.\ of the proposition.

Start the induction with $n = 1$, \ie the point $c_1$.
Note that $c_1 \notin G$, so there will be only one link
$\ell \in \chain$ with $c_1 \in \pi_p(\ell)$.
Let $\eta_1 \in (0, s^{-p} \eps/2)$ be as in Lemma~\ref{lem:linkDn}.
Then, since each $k$ contains $1$ in its $\beta$-orbit, each $\Dlev_k$
intersecting $(c_1-\eta_1, c_1]$ is either contained in $(c_1-\eta_1, c_1]$
or has $c_1$ as lower endpoint (\ie $\beta(k) = 1$).
In the latter case, also $\Dlev_l \cap (c_1-\eta_1, c_1] = \emptyset$
for each $l$ with $\beta(l) = k$.
Hence by inserting $c_1-\eta_1$ into $G$,
we can refine the chain $\chain$ so that properties 3.\ and 4.\ holds for
the link $\ell$ with $\pi_p(\ell) \owns c_1$.

Suppose we have refined the chain to accommodate links $\ell$
such that $\pi_p(\ell) \owns c_i$ for each $i < n$.
Then $c_n$ does not belong to the set $G$ created so far, so there will be
only one link $\ell \in {\chain}$ with $\pi_p(\ell) \owns c_n$.
Again, find $\eta_n\in (0, s^{-p}\eps /2)$ as in Lemma~\ref{lem:linkDn}
and extend $G$ with $c_n + \eta_n$ if $c_n$ is a local minimum
of $T_s^n$ or with $c_n - \eta_n$ if $c_n$ is a local minimum
of $T_s^n$.

We skip the induction step if $\Dlev_n$ already belongs to complementary
interval to
$G$ extended with all point $c_i \pm \eta_i$ created so far.
Since $|\Dlev_n| \to 0$, the induction will eventually cease altogether,
and then the required set $G$ is found.
\end{proof}

\section{Symmetric and Quasi-Symmetric Arcs} \label{sec:furtherlemmas}

From now on all chains $\chain_p$ are as in Proposition~\ref{prop:chains}. Also, we assume that
the slope $s$ is such that $T_s$ is Fibonacci-like and we abbreviate $T := T_s$.

\noindent Suppose $A = [u, v] \subset \A$ is a quasi-$p$-symmetric arc with $u, v \in \ell$, and let $A_u$ and
$A_v$ be arc-components of $\ell$ that contain $u$ and $v$ respectively. We will sometimes say, for simplicity,
that the arc $[A_u, A_v]$ between $A_u$ and $A_v$, including $A_u$ and $A_v$, is quasi-$p$-symmetric.
\begin{defi}\label{basic-quasi-p-symmetric}
A quasi-$p$-symmetric arc $A = [u, v]$ with midpoint $m$ is called {\em basic} if there is no $p$-point
$w \in (u, v)$ such that either $[u, w] \subset [u,m]$ or $[w, v] \subset [m,v]$ is a quasi-$p$-symmetric arc.
\end{defi}
\begin{ex} Let us consider the Fibonacci map and the corresponding inverse limit space. Then the arc-component
$\C0$ (as well as an arc-component $\A$) contains the arc $A = [x^0, x^{33}]$ such that the folding pattern  of
$A$ is as follows (see Figure~\ref{fig:example}):
\begin{equation}\label{eq:fold}
 27 \ 6 \ \overbrace{\underbrace{{\bf 1}_2 \ {\bf 14}_3 \ {\bf 1} \ 6 \ {\bf 1}_6}_{\textrm{basic}} \ 0 \ 3 \ 0 \ 1 \ 0 \ 2 \ 0 \ 1 \ 4 \ 1 \ {\bf 9} \ 1 \ 4 \ 1 \ 0 \ 2 \ 0 \ 1 \ 0 \ 3 \ 0 \, \underbrace{1 \ 6 \ {\bf 1}_{30}}_{\textrm{sym}}}^{\textrm{quasi-$p$-symmetric}} \ 0 \ 3 \ 0
\end{equation}
(for easier orientation we write sometimes for example $1_2$ which means that the $p$-level 1 belongs to
the $p$-point $x^2$).
We can choose a chain $\chain_p$ such that $p$-points with $p$-levels 1 and 14 belong to the same link.
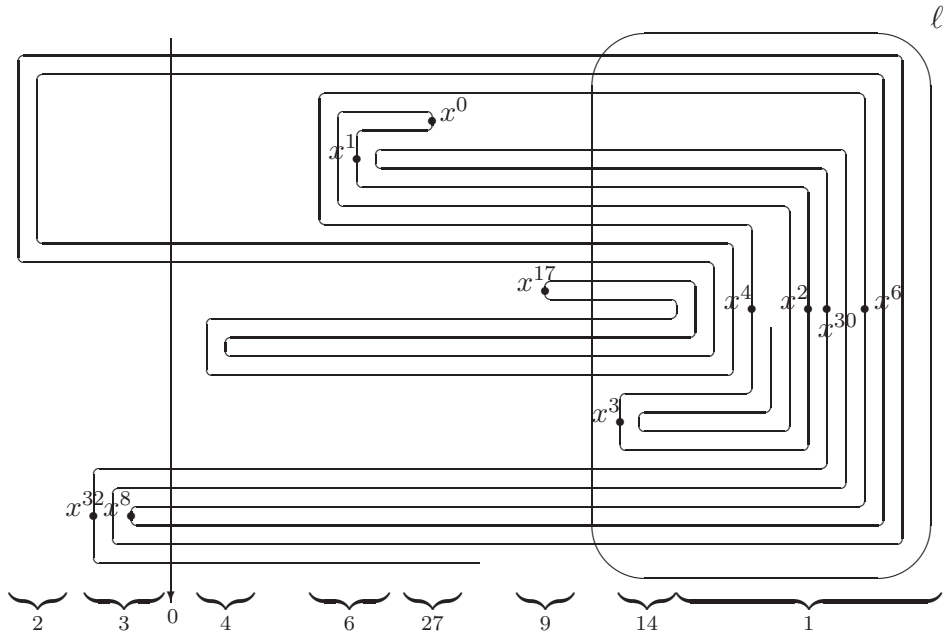
\begin{figure}[ht]
\unitlength=5mm
\begin{picture}(25,16)(-1,9)
\put(1, 24){\oval(0.5,0.5)[tl]}
  \put(1, 24.25){\line(1,0){23}} \put(0.75, 24){\line(0,-1){5}}
\put(1.5, 23.5){\oval(0.5,0.5)[tl]}
  \put(1.5, 23.75){\line(1,0){22}} \put(1.25, 23.5){\line(0,-1){4}}
\put(9, 23){\oval(0.5,0.5)[tl]}
  \put(9, 23.25){\line(1,0){14}} \put(8.75, 23){\line(0,-1){3}}
\put(9.5, 22.5){\oval(0.5,0.5)[tl]}
  \put(9.5, 22.75){\line(1,0){2}} \put(9.25, 22.5){\line(0,-1){2}}
\put(10, 22){\oval(0.5,0.5)[tl]}
  \put(10, 22.25){\line(1,0){1.5}} \put(9.75, 22){\line(0,-1){1}}
    \put(11.5, 22.5){\oval(0.5,0.5)[r]}
\put(10.5, 21.5){\oval(0.5,0.5)[l]}
  \put(10.5, 21.75){\line(1,0){12}}
%\put(10.5, 21){\oval(0.5,0.5)[bl]}
  \put(10.5, 21.25){\line(1,0){11.5}}
\put(10, 21){\oval(0.5,0.5)[bl]}
  \put(10, 20.75){\line(1,0){11.5}}
\put(9.5, 20.5){\oval(0.5,0.5)[bl]}
  \put(9.5, 20.25){\line(1,0){11.5}}
\put(9, 20){\oval(0.5,0.5)[bl]}
  \put(9, 19.75){\line(1,0){11}}
\put(1.5, 19.5){\oval(0.5,0.5)[bl]}
  \put(1.5, 19.25){\line(1,0){18}}
\put(1, 19){\oval(0.5,0.5)[bl]}
  \put(1, 18.75){\line(1,0){18}}
\put(15, 18){\oval(0.5,0.5)[l]}
  \put(15, 18.25){\line(1,0){3.5}}
   \put(15, 17.75){\line(1,0){3}}
\put(6, 17){\oval(0.5,0.5)[tl]}
  \put(6, 17.25){\line(1,0){12}}  \put(5.75, 17){\line(0,-1){1}}
\put(6.5, 16.5){\oval(0.5,0.5)[l]}
  \put(6.5, 16.75){\line(1,0){12}}
    \put(6.5, 16.25){\line(1,0){12.5}}
\put(6, 16){\oval(0.5,0.5)[bl]}
  \put(6, 15.75){\line(1,0){13.5}}
\put(17, 15){\oval(0.5,0.5)[tl]}
  \put(17, 15.25){\line(1,0){3}}  \put(16.75, 15){\line(0,-1){1}}
\put(17.5, 14.5){\oval(0.5,0.5)[l]}
  \put(17.5, 14.75){\line(1,0){3}}
    \put(17.5, 14.25){\line(1,0){3.5}}
\put(17, 14){\oval(0.5,0.5)[bl]}
  \put(17, 13.75){\line(1,0){4.5}}
\put(3, 13){\oval(0.5,0.5)[tl]}
  \put(3, 13.25){\line(1,0){19}}  \put(2.75, 13){\line(0,-1){2}}
\put(3.5, 12.5){\oval(0.5,0.5)[tl]}
  \put(3.5, 12.75){\line(1,0){19}}  \put(3.25, 12.5){\line(0,-1){1}}
\put(4, 12){\oval(0.5,0.5)[l]}
  \put(4, 12.25){\line(1,0){19}}
    \put(4, 11.75){\line(1,0){19.5}}
\put(3.5, 11.5){\oval(0.5,0.5)[bl]}
  \put(3.5, 11.25){\line(1,0){20.5}}
\put(3, 11){\oval(0.5,0.5)[bl]}
  \put(3, 10.75){\line(1,0){10}}
\put(18, 17.5){\oval(0.5,0.5)[r]}
\put(18.5, 18){\oval(0.5,0.5)[tr]}
  \put(18.75, 18){\line(0,-1){1}}
\put(18.5, 17){\oval(0.5,0.5)[br]}
\put(19, 18.5){\oval(0.5,0.5)[tr]}
  \put(19.25, 18.5){\line(0,-1){2}}
\put(19, 16.5){\oval(0.5,0.5)[br]}
\put(19.5, 19){\oval(0.5,0.5)[tr]}
  \put(19.75, 19){\line(0,-1){3}}
\put(19.5, 16){\oval(0.5,0.5)[br]}
\put(20, 19.5){\oval(0.5,0.5)[tr]}
  \put(20.25, 19.5){\line(0,-1){4}}
\put(20, 15.5){\oval(0.5,0.5)[br]}
%\put(20.5, 20){\oval(0.5,0.5)[tr]}
  \put(20.75, 17){\line(0,-1){2}}
\put(20.5, 15){\oval(0.5,0.5)[br]}
\put(21, 20){\oval(0.5,0.5)[tr]}
  \put(21.25, 20){\line(0,-1){5.5}}
\put(21, 14.5){\oval(0.5,0.5)[br]}
\put(21.5, 20.5){\oval(0.5,0.5)[tr]}
  \put(21.75, 20.5){\line(0,-1){6.5}}
\put(21.5, 14){\oval(0.5,0.5)[br]}
\put(22, 21){\oval(0.5,0.5)[tr]}
  \put(22.25, 21){\line(0,-1){7.5}}
\put(22, 13.5){\oval(0.5,0.5)[br]}
\put(22.5, 21.5){\oval(0.5,0.5)[tr]}
  \put(22.75, 21.5){\line(0,-1){8.5}}
\put(22.5, 13){\oval(0.5,0.5)[br]}
\put(23, 23){\oval(0.5,0.5)[tr]}
  \put(23.25, 23){\line(0,-1){10.5}}
\put(23, 12.5){\oval(0.5,0.5)[br]}
\put(23.5, 23.5){\oval(0.5,0.5)[tr]}
  \put(23.75, 23.5){\line(0,-1){11.5}}
\put(23.5, 12){\oval(0.5,0.5)[br]}
\put(24, 24){\oval(0.5,0.5)[tr]}
  \put(24.25, 24){\line(0,-1){12.5}}
\put(24, 11.5){\oval(0.5,0.5)[br]}
\put(18.25, 10){$\underbrace{\hspace{100pt}}_1$}
\put(16.7, 10){$\underbrace{\hspace{20pt}}_{14}$}
\put(14, 10){$\underbrace{\hspace{0pt}}_{9}$}
\put(11, 10){$\underbrace{\hspace{0pt}}_{27}$}
\put(8.5, 10){$\underbrace{\hspace{30pt}}_{6}$}
\put(5.5, 10){$\underbrace{\hspace{2pt}}_{4}$}
\put(4.8, 24.7){\vector(0,-1){15}}
\put(4.7, 9.28){${\hspace{0pt}}_{0}$}
\put(2.5, 10){$\underbrace{\hspace{30pt}}_{3}$}
\put(0.5, 10){$\underbrace{\hspace{2pt}}_{2}$}
\put(20.5, 17.6){\oval(9,14.5)} \put(25, 25){$\ell$}
\put(14.75, 18){\circle*{0.2}}  \put(14, 18){\small $x^{17}$}
\put(11.75, 22.5){\circle*{0.2}}  \put(11.95, 22.5){\small $x^0$}
\put(9.75, 21.5){\circle*{0.2}}  \put(9, 21.5){\small $x^1$}
\put(16.75, 14.5){\circle*{0.2}}  \put(16, 14.5){\small $x^3$}
\put(20.25, 17.5){\circle*{0.2}}  \put(19.5, 17.5){\small $x^4$}
\put(21.75, 17.5){\circle*{0.2}}  \put(21, 17.5){\small $x^2$}
\put(22.25, 17.5){\circle*{0.2}}  \put(22, 16.75){\small $x^{30}$}
\put(23.25, 17.5){\circle*{0.2}}  \put(23.5, 17.5){\small $x^6$}
\put(3.75, 12){\circle*{0.2}} \put(3, 12){\small $x^8$}
\put(2.75, 12){\circle*{0.2}} \put(2, 12){\small $x^{32}$}
\end{picture}
\caption{The arc $A$ with folding pattern as in \eqref{eq:fold},
with $p$-points of $p$-level $1$ and $14$ in a single link $\ell$.}
\label{fig:example}
\end{figure}
The arc $[x^2,x^6]$ with the folding pattern $1\ 14\ 1\ 6\ 1$ is a basic quasi-$p$-symmetric arc; the arc
$[x^2,x^{30}]$ with the folding pattern as in \eqref{eq:fold} under the wide brace is also a
quasi-$p$-symmetric but not basic, because it contains $[x^2,x^6]$. Notice also that the arc $[x^3, x^{30}]$
is a quasi-$p$-symmetric arc for which Proposition~\ref{lem:until_lm} and Proposition~\ref{lem:symarc}
do not work (see the folding patterns to the left of $[x^3, x^{30}]$ and to the right of $[x^3, x^{30}]$).
\end{ex}
\begin{lemma}\label{prop2}\label{lem:maxL}
Let $\chain_p$ be a chain and $[x,y]$ a quasi-$p$-symmetric arc with respect to this chain (not contained
in a single link) with midpoint $m$ and such that $L_p(x) \geq L_p(m)$. Let $A_x$ be the link-tip of
$[x,y]$ which contains $x$. Then $L_p(m) > L_p(z)$ for all $p$-points
$z \in [x,y] \setminus (\{ m \} \cup A_x)$.
\end{lemma}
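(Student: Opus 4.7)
The plan is to argue by contradiction: assume some $p$-point $z \in [x,y] \setminus (\{m\} \cup A_x)$ satisfies $L_p(z) \ge L_p(m)$, and split based on whether $z$ lies in the inner $p$-symmetric arc $[x', y'] := [x,y] \setminus \ell\mbox{-tips}$ or in the remaining link-tip portion $A_y \cap [x,y]$.

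First I would dispatch the inner case by symmetry alone. If $z \in [x',y'] \setminus \{m\}$ had $L_p(z) \ge L_p(m)$, then because $[x',y']$ is $p$-symmetric with midpoint $m$ the value $L_p(m)$ is the (unique) maximum of $L_p$ on $[x',y']$; equality $L_p(z) = L_p(m)$ with $z \ne m$ would force, by Remark~\ref{rem_basic}(a) applied to the two equal-level points $z$ and $m$, a $p$-point strictly between them of strictly higher $p$-level, contradicting this maximality. So no offending $z$ can lie in the inner arc.

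For the substantive case $z \in A_y \cap [x,y]$, I would invoke Remark~\ref{rem_basic}(b) to find the maximal open arc $(\bar x, \bar y) \ni m$ on which $m$ has the highest $p$-level, with boundary satisfying $L_p(\bar x) > L_p(\bar y) > L_p(m)$ and, by Lemma~\ref{lem:order}, the sub-arc $[\bar x, m]$ strictly shorter than $[m, \bar y]$. The hypothesis $L_p(x) \ge L_p(m)$ places $x$ strictly outside the maximal open arc on the $x$-side of $m$, while the hypothetical $z$ lies outside it on the $y$-side. Hence one of $\{\bar x, \bar y\}$ lies in $[m,x] \subset A_x$ while the other lies in $[m,z] \subset A_y$; in particular the $y$-side boundary is a $p$-point of $A_y \cap [x,y]$ of level strictly greater than $L_p(m)$, and is precisely the \emph{first} such level-exceeding $p$-point encountered as one moves outward from $y'$ into $A_y$.

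The contradiction will then come from pairing the first level-exceeding $p$-points on the two sides using the $p$-symmetry of $[x',y']$. Concretely, the one-to-one parametrization of the maximal arc by $\pi_{p+L_p(m)} = \pi_p \circ \sigma^{-L_p(m)}$ provided by Remark~\ref{rem_basic}(b) lets me view $[x',y']$ as an interval positioned about the image $c$ of $m$; combining this with the $\beta$-neighbor structure forced by the inner-arc symmetry and the link-control of Proposition~\ref{prop:chains}(3)--(4), I would force the two first-exceedance points to share a common $p$-level, contradicting $L_p(\bar x) > L_p(\bar y)$. The hard part will be this last pairing step: $\beta$-neighbor levels of $p$-points outside $[x',y']$ depend on the actual point in $\A$ and not just on its $\pi_p$-value, so propagating the symmetry beyond the inner arc requires careful use of the chain structure from Proposition~\ref{prop:chains} together with the fine-grained asymmetry of Lemma~\ref{lem:order}.
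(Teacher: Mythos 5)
Your setup is fine and partly overlaps with the paper's: you correctly dispose of the inner-arc case by the maximality/uniqueness of $m$ (Remark~\ref{rem_basic}(a)), and you correctly invoke Remark~\ref{rem_basic}(b) to locate the maximal open arc $(\bar x,\bar y)\ni m$ with $L_p(\bar x)>L_p(\bar y)>L_p(m)$ and the length asymmetry $|[\bar x,m]|<|[m,\bar y]|$ from Lemma~\ref{lem:order}. But the route you propose from there has a genuine gap, and I don't believe it can be closed as stated.

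The target you set up --- ``force the two first-exceedance points to share a common $p$-level, contradicting $L_p(\bar x)>L_p(\bar y)$'' --- is not attainable. By Remark~\ref{rem_basic}(b), $\pi_{p+L}([\bar x,\bar y])=[c_{S_l},c_{S_k}]$ with $l=Q(k)$, so $L_p(\bar x)=L+S_l$ and $L_p(\bar y)=L+S_k$ with $l<k$; these are unequal essentially by definition, and nothing about quasi-$p$-symmetry or the chain construction forces them to coincide. Indeed, the whole point of the ``quasi'' in quasi-$p$-symmetric --- made explicit later in Corollary~\ref{cor:qusi_symmetry_type} and Propositions~\ref{lem:symarc}, \ref{lem:until_lm} --- is that the first fold beyond the symmetric core is level-asymmetric; trying to argue it is level-symmetric runs directly against the structure being described. (There is also a small slip: $[m,x]\subset A_x$ and $[m,z]\subset A_y$ are not true; only the outer tails beyond $x'$, $y'$ lie in the link-tips.)

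The paper's contradiction is of a different kind: assuming both $\bar x,\bar y\in[x,y]$, it shows that the whole arc $[x,y]$ would have to lie in the single link $\ell$, contradicting the standing hypothesis. To get there it identifies the reflected point $b'\in(\bar x,m)$ with $T(\pi_{p+L}(b'))=T(\pi_{p+L}(\bar y))$, observes $[\bar x,b']\subset\ell$, and then runs a genuinely Fibonacci-specific analysis: it takes $r=Q(k+1)$, $r'=Q(l+1)$, uses Condition~\eqref{eq:cond4} to get $r'<r$, iterates by $T^{S_{r'}}$, $T^{S_r}$ and once more by $T$, introduces the auxiliary $p$-point $n$ with $\pi_{p+L}(n)=\zeta_r$, and uses Proposition~\ref{prop:chains}(3)--(4) on the maximal symmetric arc with midpoint $n$ to force $[\bar x,m]\subset\ell$. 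That machinery (closest precritical points, the $\beta$-orbit structure, the kneading inequalities) is where the substance of the lemma lives; your sketch acknowledges ``the hard part'' but does not supply it, and the contradiction you aim for is the wrong one.
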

\begin{proof}
Let $A = [a,b] \owns m$ be the smallest arc with $p$-points $a,b$ of higher $p$-level than $L_p(m)$, say
$m \in [a,b]$ and $L_p(m) \leq L_p(a) \leq L_p(b)$. By part (a) of Remark~\ref{rem_basic} we obtain
$L := L_p(m) < L_p(a) < L_p(b)$. Since $L_p(x) \geq L_p(m)$, $[x,m]$ contains one endpoint of $A$. We can
assume that $[x,m] \setminus A$ is contained in a single link, because otherwise $[x,y] \setminus \ell$-tips
is not $p$-symmetric. If $[y,m]$ does not contain the other endpoint of $A$, then the statement is proved.

Let us now assume by contradiction that $A \subset [x,y]$. Again, we can assume that $[y,m] \setminus A$ is
contained in a single link, because otherwise $[x,y] \setminus \ell$-tips is not $p$-symmetric. By part (a)
of Remark~\ref{rem_basic} once more we have $\pi_{p+L}([a,b]) = [c_{S_l}, c_{S_k}] \owns c = \pi_{p+L}(m)$
for some $k$ and $l = Q(k)$, and $|\pi_{p+L}(a) - c| > |\pi_{p+L}(b) - c|$, see the top line of
Figure~\ref{fig:xy}. It follows that $[a,b]$ contains a symmetric open arc $(b',b)$ where $b' \in (a,b)$ is
the unique point such that $T(\pi_{p+L}(b')) = T(\pi_{p+L}(b))$. Since $[x,y] \setminus \ell$-tips is
$p$-symmetric, $L_p(b) > L_p(m)$ implies $b, b' \in \ell$-tips. Moreover, the arc $[a,b']$ is contained in
the same link $\ell$ as $b$.

If $k$ and $l$ are relatively small, then $\pi^{-1}_{p}(c_{S_l})$ and $\pi^{-1}_{p}(c_{S_k})$ belong to
different links of $\chain_p$, so we can assume that they are so large that we can apply
Condition \eqref{eq:cond4}.
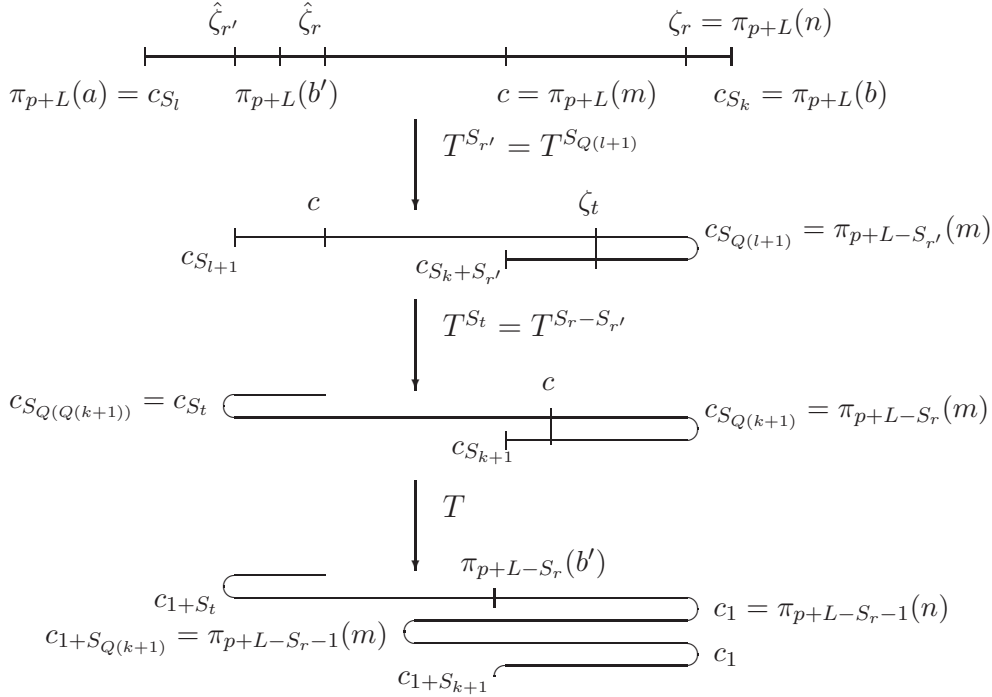
\begin{figure}[ht]
\unitlength=12mm
\begin{picture}(12,8.5)(4.2,-0.5)
\thinlines
\put(6,7){\line(1,0){6.5}}
\put(6,6.9){\line(0,1){0.2}}\put(4.5, 6.5){\small $\pi_{p+L}(a) = c_{S_l}$}
\put(12.5,6.9){\line(0,1){0.2}}\put(12.3, 6.5){\small $c_{S_k}=\pi_{p+L}(b)$}
\put(10,6.9){\line(0,1){0.2}}\put(9.9, 6.5){\small $c = \pi_{p+L}(m)$}
\put(7,6.9){\line(0,1){0.2}}\put(6.7, 7.3){\small $\hat \zeta_{r'}$}
\put(7.5,6.9){\line(0,1){0.2}}\put(7, 6.5){\small $\pi_{p+L}(b')$}
\put(8,6.9){\line(0,1){0.2}}\put(7.7, 7.3){\small $\hat \zeta_{r}$}
\put(12,6.9){\line(0,1){0.2}}\put(11.8, 7.3){\small $\zeta_{r} = \pi_{p+L}(n)$}
\put(9,6.3){\vector(0,-1){1}}\put(9.3, 5.9){$T^{S_{r'}} = T^{S_{Q(l+1)}}$}
\put(7,5){\line(1,0){5}}
\put(7,4.9){\line(0,1){0.2}}\put(6.4, 4.7){\small $c_{S_{l+1}}$}
\put(12.2, 5){\small $c_{S_{Q(l+1)}} = \pi_{p+L-S_{r'}}(m)$}
\put(8,4.9){\line(0,1){0.2}} \put(7.8, 5.3){\small $c$}
\put(11,4.65){\line(0,1){0.45}}\put(10.8, 5.3){\small $\zeta_t$}
\put(12,4.875){\oval(0.25, 0.25)[r]}
\put(10,4.75){\line(1,0){2}}
\put(10,4.65){\line(0,1){0.2}}\put(9, 4.6){\small $c_{S_k + S_{r'}}$}
\put(9,4.3){\vector(0,-1){1}}\put(9.3, 3.9){$T^{S_t} = T^{S_r-S_{r'}}$}
\put(7,3){\line(1,0){5}}
\put(7,3.125){\oval(0.25, 0.25)[l]}\put(7,3.25){\line(1,0){1}}
\put(4.5, 3.1){\small $c_{S_{Q(Q(k+1))}} = c_{S_t}$}
\put(12.2, 3){\small $c_{S_{Q(k+1)}}  = \pi_{p+L-S_{r}}(m)$}
\put(10.5,2.7){\line(0,1){0.4}}\put(10.4, 3.3){\small $c$}
\put(12,2.875){\oval(0.25, 0.25)[r]}
\put(10,2.75){\line(1,0){2}}
\put(10,2.65){\line(0,1){0.2}}\put(9.4, 2.6){\small $c_{S_{k+1}}$}
\put(9,2.3){\vector(0,-1){1}}\put(9.3, 1.9){$T$}
\put(7,1){\line(1,0){5}}
\put(7,1.125){\oval(0.25, 0.25)[l]}\put(7,1.25){\line(1,0){1}}
\put(6.1, 0.9){\small $c_{1+S_t}$}
\put(12.3, 0.8){\small $c_1  = \pi_{p+L-S_r-1}(n)$}
\put(12,0.875){\oval(0.25, 0.25)[r]}
\put(9,0.75){\line(1,0){3}}
\put(9,0.625){\oval(0.25, 0.25)[l]}\put(4.9, 0.5){\small $c_{1+S_{Q(k+1)}} = \pi_{p+L-S_{r}-1}(m)$}
\put(9,0.5){\line(1,0){3}}
\put(12,0.375){\oval(0.25, 0.25)[r]} \put(12.3, 0.3){\small $c_1$}
\put(10,0.25){\line(1,0){2}}
\put(10,0.125){\oval(0.25, 0.25)[tl]}\put(8.8, 0.05){\small $c_{1+S_{k+1}}$}
\put(9.875,0.9){\line(0,1){0.2}}\put(9.5, 1.3){\small $\pi_{p+L-S_r}(b')$}
\end{picture}
\caption{The image of $\pi_{p+L}([x,y]) \owns c = \pi_{p+L}(m)$
under appropriate iterates of $T$.
}
\label{fig:xy}
\end{figure}
Let $r = Q(k+1)$ and $r' = Q(l+1)$ be the lowest indices such that the closest precritical points
$\hat \zeta_{r'} \in [c_{S_l}, c]$ and $\zeta_r \in [c, c_{S_k}]$. By \eqref{eq:cond4},
$r'= Q(l+1) = Q(Q(k)+1) < Q(k+1) = r$. Consider the image of $[c_{S_l}, c_{S_k}]$ first under $T^{S_{r'}}$
and then under $T^{S_r}$ (second and third level in Figure~\ref{fig:xy}).

By the choice of $r$, we obtain $\pi_{p+L-S_r}([m,b]) = [ c_{S_{k+1}}, c_{S_{Q(k+1)}}]$, and
$\pi_{p+L-S_r}([a,b']) \owns c_{S_t}$ for $t = Q(Q(k+1))$. As in \eqref{eq:cond4twice},
$|c_{S_t} - c| > |c_{S_{Q(k+1)}} - c| > |c_{S_{k+1}} - c|$, and taking one more iterate, we see that
$[c_{1+S_{k+1}}, c_1] \subset [c_{1+S_{Q(k+1)}}, c_1] \subset [1+c_{S_t}, c_1]$ (last level in
Figure~\ref{fig:xy}).

Let $n \in [m,b]$ be such that $\pi_{p+L}(n) = \zeta_r$, see the first level in  Figure~\ref{fig:xy}.
Since $[a,b']$ belongs to a single link $\ell \in \chain_p$, $m \in \ell$ as well. Suppose that $[a,m]$
is not contained in $\ell$. Then there is a maximal symmetric arc $[d', d]$ with midpoint $n$ such that
the points $d, d' \notin \ell$. Then the arcs $[d', a]$ and $[d, m]$ both enter the same link $\ell$ but
they have different `first' turning levels in $\ell$, contradicting the properties of $\chain_p$ from
Proposition~\ref{prop:chains}.

This shows that $[a,m] \subset \ell$. In the beginning of the proof we argued that the components of
$[x,y] \setminus A$ belong to the same link, so that means that the entire arc $[x,y]$ is contained in a
single link, contradicting the assumptions of the proposition. This concludes
the proof.
\end{proof}
\begin{remark}
In fact, this proof shows that the $p$-point $b \in \partial A$ of the highest $p$-level belongs to $[m,x]$.
Indeed, if $a \in [m,x]$, then because $[m,b]$ has shorter arc-length than $[m,a]$, either $a$ and $b$,
and therefore $x$ and $y$ do not belong to the same link $\ell$ (whence $[x,y]$ is not quasi-$p$-symmetric),
or the arc $[a,b]$ itself is quasi-$p$-symmetric and contradicts Lemma~\ref{prop2}.
\end{remark}

\begin{corollary}\label{cor:qusi_symmetry_type}
Let $A = [x,y] \subset \A$ be a quasi-$p$-symmetric arc with midpoint $m$. Let $A_x$, $A_y$ be the
link-tips of $A$ containing $x$ and $y$ respectively. If $x$ is the midpoint of $A_x$, and $y$ is the
midpoint of $A_y$, then either $L_p(x) > L_p(m) > L_p(y)$, or $L_p(x) < L_p(m) < L_p(y)$.
\end{corollary}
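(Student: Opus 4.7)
The approach is to apply Lemma~\ref{lem:maxL} in both orientations of $A$, and then to combine this with Remark~\ref{rem_basic}(a) and the midpoint hypotheses to eliminate every remaining configuration of $L_p(x), L_p(y)$ relative to $L_p(m)$.

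First I would apply Lemma~\ref{lem:maxL} to $A=[x,y]$ under the hypothesis $L_p(x)\ge L_p(m)$. Since the link-tip $A_y$ is distinct from $A_x$ (otherwise $A$ would sit in a single arc-component of $\ell$, hence be contained in a single link, precluded by quasi-$p$-symmetry), the point $y$ lies in $[x,y]\setminus(\{m\}\cup A_x)$, so the lemma forces $L_p(y)<L_p(m)$. Applying the same lemma to the reversed arc $[y,x]$ yields the symmetric implication $L_p(y)\ge L_p(m)\Rightarrow L_p(x)<L_p(m)$. The equalities $L_p(x)=L_p(m)$ and $L_p(y)=L_p(m)$ are then ruled out as follows: if $L_p(x)=L_p(m)$, then Remark~\ref{rem_basic}(a) produces a $p$-point $z\in(x,m)$ with $L_p(z)>L_p(m)$; if $z\in A_x$ this contradicts the hypothesis that $x$ is the maximum-level (midpoint) $p$-point of $A_x$, while if $z\notin A_x$ it contradicts Lemma~\ref{lem:maxL} (now applicable, since $L_p(x)=L_p(m)\ge L_p(m)$). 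Analogously $L_p(y)\neq L_p(m)$.

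These steps reduce the claim to excluding the simultaneous inequalities $L_p(x)<L_p(m)$ and $L_p(y)<L_p(m)$, which is the principal obstacle. In this hypothetical configuration $m$ carries the strict maximum $p$-level on all of $A$, and each link-tip $A_x, A_y$ is itself a $p$-symmetric arc around its midpoint (since a $p$-turning arc-component has its unique maximum-level $p$-point at the midpoint). My plan is to show that $A$ would then itself be $p$-symmetric, contradicting condition~(i) of Definition~\ref{quasi-p-symmetric}. When $L_p(x)=L_p(y)$, the chain construction of Proposition~\ref{prop:chains} ensures that any two arc-components of $\ell$ whose midpoints share a common $p$-level have identical folding patterns around those midpoints, so the palindrome of $A\setminus\ell\text{-tips}$ around $m$ extends through matching $\ell$-tip contributions to make $A$ palindromic, with $\pi_p(x)=c_{L_p(x)}=c_{L_p(y)}=\pi_p(y)$. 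When $L_p(x)\neq L_p(y)$, the argument is more delicate: extending $A\setminus\ell\text{-tips}$ to the maximal $p$-symmetric arc containing $m$ and using the Fibonacci-like order estimate from Lemma~\ref{lem:order} to control how far the extension reaches into the link-tips, one produces a symmetric counterpart of $x$ inside $A_y$; since $y$ is the midpoint of $A_y$, this forces $L_p(x)\le L_p(y)$, and the symmetric construction at the other end forces $L_p(y)\le L_p(x)$, reducing to the previous subcase. Making this final step fully rigorous, by invoking the detailed structure of the special chains $\chain_p$ from Appendix~\ref{sec:chains}, is the main technical difficulty.
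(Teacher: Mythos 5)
The paper provides no explicit argument for this corollary, presenting it as an immediate consequence of Lemma~\ref{lem:maxL}.  Your three-way case split (one endpoint at level $\ge L_p(m)$; equality; both endpoints below $L_p(m)$) is the natural implicit decomposition, and your treatment of the first two cases is correct: applying Lemma~\ref{lem:maxL} in each orientation kills the case that both endpoints have level $\ge L_p(m)$, and combining Remark~\ref{rem_basic}(a) with the midpoint hypothesis on $A_x$ (resp.\ $A_y$) kills the equality case exactly as you describe.

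On the remaining case you flag as the ``principal obstacle,'' your diagnosis is right and your sketch is in the correct direction, but it is more intricate than necessary and your appeal to the Fibonacci-like estimate of Lemma~\ref{lem:order} is not actually needed there.  If $L_p(x),L_p(y)<L_p(m)=:L$, then $m$ is the strict maximum of $L_p$ on all of $A\cup\Cl(\ell\mbox{-tips})=[f_x,f_y]$, so that arc lies inside the open arc $W$ of Remark~\ref{rem_basic}(b) on which $\pi_{p+L}$ is injective.  Let $W'\subset W$ be the maximal subarc symmetric about $m$; the reflection $\tau$ of $W'$ preserves $\pi_p$ and $p$-levels (because $T_s^L$ is symmetric under $t\mapsto 1-t$ for $L\ge 1$).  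Since $\partial W'$ contains at least one endpoint of $\partial W$, one full side of $W$ lies in $W'$; whichever link-tip, say $A_y$, lies on that side is then contained in $W'$.  As $\tau$ preserves $\pi_p$, it maps $\A\cap\ell$ to itself; since $\tau(e_y)=e_x$, the connected set $\tau(A_y)$ lies on the far side of $e_x$, hence in $A_x$, and $\pi_p(\tau(f_y))=\pi_p(f_x)$ forces $\tau(f_y)=f_x$.  So $A_x\subset W'$ too and $\tau(A_x)=A_y$, making $[f_x,f_y]$ $\tau$-invariant and hence $p$-symmetric, contradicting condition~(iv) of Definition~\ref{quasi-p-symmetric}.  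This closes the case with one uniform reflection argument, rather than the two-step route (first forcing $L_p(x)=L_p(y)$, then showing $A$ palindromic) you propose; both ultimately deliver the same contradiction, and your sketch could be completed along similar lines, but the final ``making this rigorous'' step is noticeably simpler than you anticipate and does not require the Fibonacci-like order estimates, only the symmetry of $W'$ and condition~(iv).
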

\begin{remark}
Note that in general there are quasi-$p$-symmetric arcs $[x,y]$ with midpoint $m$ such that
$L_p(x) > L_p(y) > L_p(m)$. For example, if a tent map $T_s$ has a preperiodic critical point, then for every
quasi-$p$-symmetric arcs $[x,y]$ with midpoint $m$ either $L_p(x) > L_p(y) > L_p(m)$, or
$L_p(y) > L_p(x) > L_p(m)$.
\end{remark}

\begin{corollary}\label{cor:spiral_linktips}
Let $[x,y] \subset \A$ be a quasi-$p$-symmetric arc with midpoint $m$, not contained in a single link,
such that $L_p(x) > L_p(m) > L_p(y)$. If $[m,x]$ is longer than $[y,m]$ measured in arc-length, then there
exists a $p$-point $y' \in A_x$ such that $[y,y']$ is $p$-symmetric.
\end{corollary}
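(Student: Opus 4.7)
My plan is to construct $y'$ as the mirror image of $y$ under the natural $p$-symmetric involution around $m$, extended past the link-tips, and then use the arc-length hypothesis to verify that this mirror point actually lies in $A_x$.

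First I would set up the symmetric reflection. Let $L = L_p(m)$ and write $[x_0, y_0] = [x,y] \setminus \ell\text{-tips}$, the $p$-symmetric core with midpoint $m$. The $p$-symmetry provides a natural correspondence $\phi\colon [x_0,m] \to [m,y_0]$ matching $p$-points of equal $p$-level and preserving $\pi_p$. Applying $\sigma^{-L}$ transports $m$ to a $p$-point of level $0$ with $\pi_p(\sigma^{-L}(m)) = c$, so in this picture the arc $\sigma^{-L}([x_0,y_0])$ is a neighborhood of $\sigma^{-L}(m)$ on which $\pi_p$ folds symmetrically at $c$. This is the rigid structure that I would then try to extend past $x_0$ and $y_0$.

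Next I would extend $\phi$ beyond the link-tips. Starting at $y_0$ and proceeding toward $y$ along $A_y$, the arc visits a sequence of $p$-points whose $p$-levels are determined (via $\pi_p$) by where successive precritical points of $c$ sit inside $I_\ell$. Because both link-tips live in the same link $\ell$, and by Proposition~\ref{prop:chains}(2) every $p$-point of a given level that $p$-turns in $\ell$ must in fact lie in $\ell$, the corresponding mirror image of each such $p$-point would automatically sit inside some arc-component of $\ell \cap \A$ on the $x$-side. I would then argue that this arc-component is precisely $A_x$: indeed, the extension past $x_0$ into the link $\ell$ stays in the arc-component of $x_0$ by definition, which is exactly $A_x$. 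This gives a candidate $y' \in A_x$ with $L_p(y') = L_p(y)$ and $\pi_p(y') = \pi_p(y)$, and all intermediate $p$-points matched symmetrically in level.

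Finally I would invoke the arc-length hypothesis to guarantee that the extension on the $x$-side actually reaches the mirror of $y$ (rather than being cut off earlier by exiting $A_x$). The hypothesis $|[m,x]| > |[y,m]|$ provides the geometric room needed so that the $x$-side extension covers (at least) the mirror of $[y_0, y]$; once $y' \in A_x$ is realized, $[y,y']$ is $p$-symmetric by construction with midpoint $m$, giving the conclusion.

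The main obstacle is translating the geometric arc-length comparison into the combinatorial statement that the $x$-side extension reaches the mirror of $y$ and lands inside $A_x$. The two halves of the $p$-symmetric core $[x_0, y_0]$ are only symmetric combinatorially and need not have equal metric arc-length (indeed Remark~\ref{rem_basic}(b) together with Lemma~\ref{lem:order} says the two halves of the maximal highest-$L_p(m)$-level arc around $m$ differ in arc-length). So I would have to combine the arc-length hypothesis with the information that $A_x, A_y$ lie in the same link $\ell$ (so $\pi_p$-values are constrained to $I_\ell$), and with the Fibonacci-like kneading structure that controls how $\pi_p\circ\sigma^{-L}$-images of successive $p$-point neighborhoods of $c$ are nested, in order to conclude that the mirror of $y$ is indeed reached by the $x$-side extension without escaping $A_x$.
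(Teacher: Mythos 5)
Your overall idea---build the mirror image $y'$ of $y$ across the midpoint $m$ and then show it already lies in $A_x$---is the same as the paper's, but you do not actually close the argument, and you say so yourself. The gap you flag in your last paragraph is precisely the missing step.

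The paper does not reconstruct a reflection $\phi$ and then try to propagate it past the link-tips level by level. Instead, it reuses the concrete structure set up in the proof of Lemma~\ref{lem:maxL} (and the remark following it). There one introduces the \emph{smallest} arc $[a,b]\ni m$ whose endpoints are $p$-points of $p$-level strictly larger than $L:=L_p(m)$; after applying $\pi_{p+L}$ one has $\pi_{p+L}([a,b])=[c_{S_l},c_{S_k}]$ with $l=Q(k)$, Condition~\eqref{eq:cond4} (via Lemma~\ref{lem:order}) forces $|c-c_{S_k}|<|c-c_{S_l}|$, and hence $[a,b]$ contains a genuinely $p$-symmetric sub-arc $[b',b]$ about $m$, where $\pi_{p+L}(b')=1-\pi_{p+L}(b)$. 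The remark after Lemma~\ref{lem:maxL} then identifies which side the higher-level endpoint $b$ lies on: under the standing hypotheses of the corollary, $b\in[m,x]$, and in particular $b\in A_x$. Once one knows $y\in[m,b']$, the required point is simply the unique $y'\in[m,b]$ with $\pi_{p+L}(y')=\pi_{p+L}(y)$; symmetry of $\pi_{p+L}$ about $c$ on $[b',b]$ gives that $[y,y']$ is $p$-symmetric, and $y'\in[m,b]\subset A_x$ because $b$ is the far boundary $p$-point of $A_x$ on that side.

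Your write-up has two concrete weaknesses relative to this. First, you never identify the arc $[a,b]$ or its symmetric sub-arc $[b',b]$, so you have no handle on \emph{where} the folding pattern on the $x$-side stops matching the $y$-side; your appeal to Proposition~\ref{prop:chains}(2) only places a hypothetical mirror $p$-point in the right link if it exists, it does not produce the $p$-point or bound how far the match extends. Second, and as you note, you do not turn the hypothesis $|[m,x]|>|[m,y]|$ into anything combinatorial; in the paper this is exactly what pins down that $b$ (the higher-level endpoint, with the \emph{shorter} arm $[m,b]$, cf.\ Remark~\ref{rem_basic}(b)) sits on the $x$ side rather than the $y$ side, which is what guarantees both that $y$ falls inside the symmetric range $[m,b']$ and that its mirror lands in $A_x$. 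Without the $[a,b]$/$[b',b]$ scaffolding from Lemma~\ref{lem:maxL}, that step cannot be carried out, so as written the proposal does not constitute a proof.
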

\begin{proof}
As in the previous proof, $b \in [x,m]$ and $y \in [m,b']$ and take $y' \in [m,b]$ such that
$\pi_{p+L}(y') = \pi_{p+L}(y)$.
\end{proof}
\begin{remark}
If $A_x \ni x$ and $A_y \ni y$ are maximal arc-components of $\A \cap \ell$ (with still
$L_p(x) > L_p(m) > L_p(y)$), and $m_y$ is the midpoint of $A_y$, then there is $y' \in A_x$
such that $[y', m_y]$ is $p$-symmetric.

In other words, when $\A$ enters and turns in a link $\ell$, then it folds in a symmetric
pattern, say with levels $L_1, L_2,\dots, L_{m-1}, L_m, L_{m-1}, \dots, L_2, L_1$. The nature of the
chain $\chain_p$ is such that $L_1$ depends only on $\ell$. The Corollary~\ref{cor:spiral_linktips}
does not say that the rest of the pattern is the same also, but only that if $A \subset \A$ is such
that $A \setminus \ell\mbox{-tips}$ is $p$-symmetric, then the folding pattern at the one link-tip is
a subpattern (stopping at a lower center level) of the folding pattern at the other link-tip.
\end{remark}
\begin{proposition}[Extending a quasi-$p$-symmetric arc at its higher level endpoint] \label{lem:symarc}
Let $A = [x,y] \subset \A$ be a basic quasi-$p$-symmetric arc, not contained in a single link,
such that the $p$-points $x, y \in \ell$ are the midpoints of the link-tips of $A$ and $L_p(x) > L_p(y)$.
Let $m$ be the midpoint of $A$. Then there exists a $p$-point $m'$ such that the arc $[m, m']$ is
(quasi-)$p$-symmetric with $x$ as its midpoint.
\end{proposition}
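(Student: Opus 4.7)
The plan is to construct $m'$ explicitly as a ``mirror image'' of $m$ about $x$, exploiting the folding symmetry $T(u)=T(1-u)$ of the tent map. Throughout, write $L := L_p(x)$ and $k := L_p(m)$, so that by Corollary~\ref{cor:qusi_symmetry_type} we have $L > k$.

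First I would show that $\pi_{p+L}$ maps $[x,m]$ homeomorphically onto a closed half-neighbourhood $[c,\alpha]$ of $c$, where $\alpha := m_{-(p+L)}$. Lemma~\ref{lem:maxL} applied to $A=[x,y]$, together with $x$ being the midpoint of the link-tip $A_x$ (so $x$ has the highest $p$-level in $A_x$), ensures that no $p$-point in $(x,m]$ has level $\ge L$; equivalently $z_{-(p+L)}\ne c$ on $(x,m]$, and since $\pi_{p+L}$ is continuous on $[x,m]$ with $\pi_{p+L}(x)=c$, it is therefore monotone on $[x,m]$.

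Next I would analyse the local behaviour of $\pi_{p+L}$ at $x$. Because $c$ is not periodic for Fibonacci-like $T_s$, the level $L$ is uniquely defined, so $x_{-(p+L+1)}\ne c$, which makes $\pi_{p+L}=T\circ\pi_{p+L+1}$ monotone \emph{through} $x$, continuing from $c$ onto the side opposite to $[c,\alpha]$. Setting $\hat\alpha:=1-\alpha$, I would follow $\A$ from $x$ in the direction opposite to $m$ until $\pi_{p+L}$ hits $\hat\alpha$; the first such point is my candidate $m'$. The identity $T(\hat\alpha)=T(\alpha)$ propagates inductively to give $m'_{-(p+L-j)}=m_{-(p+L-j)}$ for every $j\ge 1$. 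In particular $m'_{-(p+k)}=c$, so $L_p(m')\le k$, and since $m$ and $m'$ share every coordinate $z_{-(p+l)}$ for $l<L$, their minimal levels coincide and $L_p(m')=k$. To complete the symmetric structure, I would extend this reflection to every $p$-point: for each $z\in[x,m]$ the point $z'\in[x,m']$ defined by $\pi_{p+L}(z')=1-\pi_{p+L}(z)$ satisfies $\pi_{p+l}(z')=\pi_{p+l}(z)$ for every $l<L$, so $z$ and $z'$ are $p$-points simultaneously and have identical $p$-levels. Combined with $\pi_p(m)=\pi_p(m')=c_k$ and the fact that $L$ is attained only at the midpoint $x$, this exhibits $[m,m']$ as $p$-symmetric with midpoint $x$.

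The main obstacle I anticipate lies in the step of following $\A$ past $x$: the arc of $\A$ beyond $x$ may meet a $p$-point of level $\ge L$ before $\pi_{p+L}$ reaches $\hat\alpha$, in which case $\pi_{p+L}$ folds and the mirror arc is no longer a single monotone branch. To handle this case I would lean on the \emph{basic} hypothesis on $A=[x,y]$ together with the tailored chain structure of Proposition~\ref{prop:chains}: the basic assumption rules out an intermediate quasi-symmetric sub-structure inside $[x,m]$ that could be reflected independently, and properties (2)--(4) of that proposition force any intervening higher-level $p$-point to lie in the same link as $x$, so the excess folding is absorbed into a link-tip---yielding precisely the \emph{quasi}-$p$-symmetric alternative that the statement allows.
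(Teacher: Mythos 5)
Your reflection construction -- projecting $[x,m]$ by $\pi_{p+L}$ (where $L=L_p(x)$), using that $x$ is a $(p+L)$-point of $(p+L)$-level $0$ so that $\pi_{p+L}$ passes monotonically through $c$ at $x$, and then tracking $\A$ past $x$ to the first preimage of $\hat\alpha = 1-\alpha$ -- is essentially the paper's Case~I, and that part is sound. The gap is in your treatment of the obstruction, which is the whole crux of the lemma.

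When a $p$-point $v$ with $L_p(v)>L$ appears in $(x,m')$ before $\pi_{p+L}$ reaches $\hat\alpha$, you assert that the chain properties (2)--(4) of Proposition~\ref{prop:chains} ``force any intervening higher-level $p$-point to lie in the same link as $x$'' so that ``the excess folding is absorbed into a link-tip,'' and you then declare this to be the quasi-$p$-symmetric alternative. Neither claim is justified, and both are in fact contrary to what actually happens. The paper does \emph{not} conclude that this scenario produces a quasi-$p$-symmetric arc; it proves that the scenario cannot occur. Concretely, after reducing to $|\tilde m - c| > |\tilde v - c|$, the paper picks the $p$-point $a'\in(x,v)$ of maximal $p$-level and its reflected partner $a\in[m,x]$, iterates forward, and then splits into two sub-cases: if $a,a'\in\ell$, the inclusion $[x,a]\subset\ell$ forces $[a',v]\subset\ell$ and hence contradicts $x$ being the midpoint of $A_x$; if $a,a'\notin\ell$, one manufactures $u''\in[m,a]$ so that $[u'',x]$ is a proper quasi-$p$-symmetric sub-arc inside $[x,m]$, contradicting the \emph{basic} hypothesis (Definition~\ref{basic-quasi-p-symmetric}). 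So the intervening high-level point need not lie in $\ell$, and the contradiction it yields is not ``absorption into a link-tip'' but an outright inconsistency with the standing hypotheses. Without this impossibility argument there is simply no $m'$ to exhibit, so this is a genuine missing step, not a stylistic shortcut.

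A smaller point worth fixing: the case distinction driving all of this is dictated, via Remark~\ref{rem_basic}(b), by whether $\tilde w=c_{S_l}$ and $\tilde v=c_{S_k}$ satisfy $k=Q(l)$ or $l=Q(k)$ (here $w$ is the first $p$-point with $L_p(w)>L$ past $y$). You never introduce $w$ or the cutting-time structure of $\tilde w$, $\tilde v$, which is precisely what lets one compare $|\tilde m-c|$ and $|\tilde v-c|$ via Lemma~\ref{lem:order} and Condition~\eqref{eq:cond4}. That comparison is what separates the harmless case from the one requiring the contradiction, so it cannot be waved away.

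Finally, the ``(quasi-)'' in the statement is not evidence that the obstruction case produces a quasi-$p$-symmetric arc; in the paper's proof the arc $[m,m']$ built in the surviving case is genuinely $p$-symmetric ($\pi_p(m)=\pi_p(m')$ and the levels mirror by your own reflection computation). Reading the hedge as a license to absorb an unexplained folding into a link-tip is a misinterpretation.
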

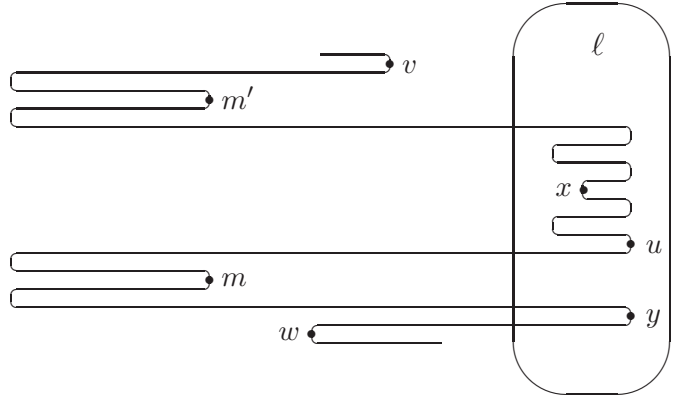
\begin{figure}[ht]
\unitlength=8mm
\begin{picture}(12,6.5)(2,1)
\put(12.5,4.6){\oval(2.6,6.5)}
\put(12.5, 7){$\ell$}
\put(8,7){\line(1,0){1}}
\put(9,6.85){\oval(0.3,0.3)[r]}
\put(9.35, 6.7){\small $v$}\put(9.15, 6.85){\circle*{0.15}}
\put(3,6.7){\line(1,0){6}}
\put(3,6.55){\oval(0.3,0.3)[l]}
\put(6,6.25){\oval(0.3,0.3)[r]}
\put(6.35, 6.1){\small $m'$}\put(6.15, 6.25){\circle*{0.15}}
\put(3,6.4){\line(1,0){3}}
\put(3,6.1){\line(1,0){3}}
\put(3,5.95){\oval(0.3,0.3)[l]}
\put(3,5.8){\line(1,0){10}}
\put(13,5.65){\oval(0.3,0.3)[r]}
\put(12,5.5){\line(1,0){1}}
\put(12,5.35){\oval(0.3,0.3)[l]}
\put(12,5.2){\line(1,0){1}}
\put(13,5.05){\oval(0.3,0.3)[r]}
\put(12.5,4.9){\line(1,0){0.5}}
\put(12.5,4.75){\oval(0.3,0.3)[l]}
\put(11.9, 4.65){\small $x$}\put(12.37, 4.75){\circle*{0.15}}
\put(12.5,4.6){\line(1,0){0.5}}
\put(13,4.45){\oval(0.3,0.3)[r]}
\put(12,4.3){\line(1,0){1}}
\put(12,4.15){\oval(0.3,0.3)[l]}
\put(12,4){\line(1,0){1}}
\put(13,3.85){\oval(0.3,0.3)[r]}
\put(13.4, 3.7){\small $u$}\put(13.15, 3.85){\circle*{0.15}}
\put(3,3.7){\line(1,0){10}}
\put(3,3.55){\oval(0.3,0.3)[l]}
\put(3,3.4){\line(1,0){3}}
\put(6,3.25){\oval(0.3,0.3)[r]}
\put(6.35, 3.15){\small $m$}\put(6.15, 3.25){\circle*{0.15}}
\put(3,3.1){\line(1,0){3}}
\put(3, 2.95){\oval(0.3,0.3)[l]}
\put(3,2.8){\line(1,0){10}}
\put(13,2.65){\oval(0.3,0.3)[r]}
\put(13.4, 2.55){\small $y$}\put(13.15, 2.65){\circle*{0.15}}
\put(8,2.5){\line(1,0){5}}
\put(8,2.2){\line(1,0){2}}
\put(8,2.35){\oval(0.3,0.3)[l]}
\put(7.3, 2.25){\small $w$}\put(7.85, 2.35){\circle*{0.15}}
\end{picture}
\caption{The configuration in Proposition~\ref{lem:symarc} where the existence of $p$-point $m'$ is proved. $v$
is the first $p$-point 'beyond' $x$ such that $L_p(v) > L_p(x)$ and $u$ is such that $[u,y]$ is $p$-symmetric
with midpoint $m$.}
\label{fig:symarc1}
\end{figure}
\begin{remark}
The conditions are all crucial in this lemma:
\begin{itemize}
\item[$(a)$]
It is important that $y$ is a $p$-point. Otherwise, if $\A$ goes straight through $\ell$ at $y$, then it is
possible that $x$ is the single $p$-point in $A_x$ (where $A_x$ is the arc-component of $\A \cap \ell$
containing $x$) and $[v,x]$ is shorter than $[x,m]$, and the lemma would fail.
\item[$(b)$]
Without the assumption that $[x,y]$ is basic the lemma can fail. If Figure~\ref{fig:example} the
quasi-$p$-symmetric arc $[x,y] = [x^3,x^{30}]$ is not basic, and indeed there is no $p$-point
$m' \in [x,v] = [x^3,x^{0}]$ with $L_p(m') = L_p(m) = L_p(x^{17}) = 9$.
\end{itemize}
\end{remark}
\begin{proof}
Since $[u,y]$ is $p$-symmetric, $L_p(u) = L_p(y) < L_p(m)$ and $x \neq u$. Let $w$ be the first $p$-point
`beyond' $y$ such that $L_p(w) > L_p(x)$. Take $L = L_p(x)$; Figure~\ref{fig:config_symarc} shows the
configuration of the relevant points on $[w,v]$ and their images under $\pi_p \circ \sigma^{-L}$ denoted
by $\tilde{}\ $-accents. Clearly $\tilde x = c$.
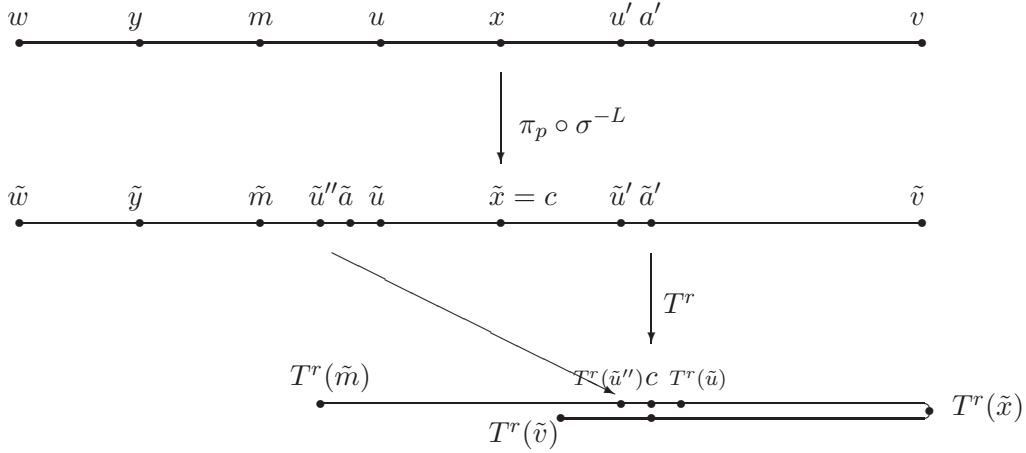
\begin{figure}[ht]
\unitlength=8mm
\begin{picture}(10,8)(3.7,-0.8)
\put(1,6){\line(1,0){15}}
\put(1, 6){\circle*{0.15}}\put(0.8, 6.3){\small $w$}
\put(3, 6){\circle*{0.15}}\put(2.8, 6.3){\small $y$}
\put(5, 6){\circle*{0.15}}\put(4.8, 6.3){\small $m$}
\put(7, 6){\circle*{0.15}}\put(6.8, 6.3){\small $u$}
\put(9, 6){\circle*{0.15}}\put(8.8, 6.3){\small $x$}
\put(11, 6){\circle*{0.15}}\put(10.8, 6.3){\small $u'$}
\put(16, 6){\circle*{0.15}}\put(15.8, 6.3){\small $v$}
\put(11.5, 6){\circle*{0.15}}\put(11.3, 6.3){\small $a'$}
\put(9, 5.5){\vector(0,-1){1.5}}\put(9.3, 4.5){\small $\pi_p \circ \sigma^{-L}$}
\put(1,3){\line(1,0){15}}
\put(1, 3){\circle*{0.15}}\put(0.8, 3.3){\small $\tilde w$}
\put(3, 3){\circle*{0.15}}\put(2.8, 3.3){\small $\tilde y$}
\put(5, 3){\circle*{0.15}}\put(4.8, 3.3){\small $\tilde m$}
\put(7, 3){\circle*{0.15}}\put(6.8, 3.3){\small $\tilde u$}
\put(9, 3){\circle*{0.15}}\put(8.8, 3.3){\small $\tilde x = c$}
\put(11, 3){\circle*{0.15}}\put(10.8, 3.3){\small $\tilde u'$}
\put(16, 3){\circle*{0.15}}\put(15.8, 3.3){\small $\tilde v$}
\put(11.5, 3){\circle*{0.15}}\put(11.3, 3.3){\small $\tilde a'$}
\put(6.5, 3){\circle*{0.15}}\put(6.3, 3.3){\small $\tilde a$}
\put(6, 3){\circle*{0.15}}\put(5.8, 3.3){\small $\tilde u''$}
 \put(11.5, 2.5){\vector(0,-1){1.5}}\put(11.7, 1.5){\small $T^r$}
 \put(6.2, 2.5){\vector(2,-1){4.7}}
\put(6,0){\line(1,0){10}}\put(16,-0.125){\oval(0.25,0.25)[r]}
\put(10,-0.25){\line(1,0){6}}
\put(11.5, 0){\circle*{0.15}}\put(11.5, -0.25){\circle*{0.15}} \put(11.4, 0.3){\small $c$}
\put(12, 0){\circle*{0.15}}\put(11.8, 0.3){\tiny $T^r(\tilde u)$}
\put(11, 0){\circle*{0.15}}\put(10.2, 0.3){\tiny $T^r(\tilde u'')$}
\put(6, 0){\circle*{0.15}}\put(5.5, 0.3){\small $T^r(\tilde m)$}
\put(10, -0.25){\circle*{0.15}}\put(8.8, -0.65){\small $T^r(\tilde v)$}
\put(16.125, -0.125){\circle*{0.15}}\put(16.5, -0.2){\small $T^r(\tilde x)$}
\end{picture}
\caption{\label{fig:config_symarc}
The configuration of points on $[w,v]$ and their images under $\pi_p \circ \sigma^{-L}$
and an additional $T^r$.}
\end{figure}

{\bf Case I:} $|\tilde w - c| < |\tilde v - c|$. Then by Remark~\ref{rem_basic} (b),
$\tilde w = c_{S_l}$ and $\tilde v = c_{S_k}$ with $k = Q(l)$. The points $\tilde y, \tilde m, \tilde u$
have symmetric copies $\tilde y', \tilde m', \tilde u'$ (\ie $T(\tilde y) = T(\tilde y')$, etc.) in reverse
order on $[c,\tilde v]$, and the pre-image under $\sigma^L \circ \pi_p^{-1}$ of the copy of $\tilde m'$
yields the required point $m'$.

{\bf Case II:} $|\tilde w - c| > |\tilde v - c|$, so in this case, $l = Q(k)$. We can in fact assume that
$|\tilde m - c| > |\tilde v - c|$ because otherwise we can find $m'$ precisely as in Case I. Now take the
$p$-point $a' \in (x,v)$ of maximal $p$-level, and let $a \in [m,x]$ be such that their
$\pi_p \circ \sigma^{-L}$-images $\tilde a$ and $\tilde a'$ are each other symmetric copies. Let $r$ be such
that $T^{r}(\tilde a) = c$; the bottom part of Figure~\ref{fig:config_symarc} shows the image of
$[\tilde m,\tilde v]$ under $T^r$. The point $T^r(\tilde x)$ and $T^r(v)$ are each others $\beta$-neighbors,
and since $L_p(v) > L_p(x)$, and by \eqref{eq:cond4}, $|T^r(\tilde x) - c| > |T^r(v)-c|$. Therefore
$[T^{r+j}(\tilde x) , T^{r+j}(\tilde a')] \supset [T^{r+j}(\tilde v) , T^{r+j}(\tilde a')]$ for all $j \geq 1$.

If $a, a' \in \ell$, then since $[x,a] \subset \ell$, this would imply that $[a', v] \subset \ell$ as well,
contrary to the fact that $x$ is the midpoint of $A_x$.

If on the other hand $a, a' \notin \ell$, then there is a point $u'' \in [m,a]$ such that $T^r(\tilde u'')$
and $T^r(\tilde u)$ are each other symmetric copies. It follows that $[u'', x]$ is a quasi-$p$-symmetric arc
properly contained in $[x,y]$, contradicting that $[x,y]$ is basic.
\end{proof}
\begin{proposition}[Extending a quasi-$p$-symmetric arc at its lower level endpoint]\label{lem:until_lm}
Let $A = [x, y] \subset \A$ be a basic quasi-$p$-symmetric arc, not contained in a single link, such that
$x$ and $y$ are the midpoints of the link-tips of $A$ and $L_p(x) > L_p(y)$. Let $m$ be the midpoint of $A$.
Then there exists a point $a$ such that $[m, a]$ is a quasi-$p$-symmetric arc with $y$ as the midpoint.
\end{proposition}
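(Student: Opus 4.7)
The plan is to adapt the strategy of Proposition~\ref{lem:symarc} with the roles of the higher- and lower-level endpoints interchanged. Set $L := L_p(m)$ and $L' := L_p(y)$; Corollary~\ref{cor:qusi_symmetry_type} gives $L_p(x) > L > L'$. Since $\pi_p(m) = c_L$, Proposition~\ref{prop:chains}(2) supplies a unique link $\ell' \in \chain_p$ containing all $p$-turns at $c_L$, and in particular $m \in \ell'$; I want to find $a \in \ell'$ on the opposite side of $y$ from $m$ so that $[m, a]$ is quasi-$p$-symmetric with midpoint $y$.

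First I would define $w'$ (respectively $v'$) as the first $p$-point on the $m$-side (respectively opposite side) of $y$ with $L_p > L'$, so that the extended arc $[w', v']$ has $y$ as its unique $p$-point of maximal level. Then $\pi_{p+L'}$ folds at $y$ with $\pi_{p+L'}(y) = c$ and is monotone on each side, and by Remark~\ref{rem_basic}(b) one has $\pi_{p+L'}(w') = c_{S_l}$ and $\pi_{p+L'}(v') = c_{S_k}$ with $l = Q(k)$ or $k = Q(l)$. Writing $\tilde z := \pi_{p+L'}(z)$, one computes $\tilde m = c_{L-L'}$, and I would take $a$ to be the mirror point on the $v'$-side of $y$ with $\tilde a = \tilde m$, which yields $\pi_p(a) = T_s^{L'}(c_{L-L'}) = c_L = \pi_p(m)$ and hence $a \in \ell'$ again via Proposition~\ref{prop:chains}(2).

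To verify the conditions of Definition~\ref{quasi-p-symmetric}, the key observation is that for any point $z \in [w', v']$, the coordinates $z_{-p-j}$ for $0 \leq j \leq L'$ are completely determined by $z_{-p-L'}$ via forward iteration under $T_s$, so a point and its mirror across $y$ share identical coordinates down to depth $L'$. This pairs $p$-points on the two sides of $y$ in $(w', v')$ with matching $p$-levels. Combining this level-preserving bijection with the mirror-symmetric structure of the link-tip $A_y \subset \ell$ of the original $[x, y]$ around $y$ gives that $[m, a] \setminus \ell'\mbox{-tips}$ carries a palindromic folding pattern around $y$, with $y$ as its unique maximal-level $p$-point; this yields condition (iii), and condition (ii) is immediate. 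Conditions (i) and (iv) follow since $L_p(m) = L > L'$ forces the level pattern to fail symmetry whenever the link-tips are added back.

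The hard part will be the case analogous to Case II of the proof of Proposition~\ref{lem:symarc}, namely when $|\tilde v' - c| < |\tilde m - c|$, so that the naive mirror $a$ falls outside $(y, v')$. There one must invoke the basic assumption on $[x, y]$ (no proper quasi-$p$-symmetric sub-arc) together with the Fibonacci-like condition~\eqref{eq:cond4} to control the closest pre-critical points: either the mirror construction can be adjusted to yield a valid $a$, or a smaller quasi-$p$-symmetric sub-arc can be extracted from $[x, y]$, contradicting basicness. This parallels the analysis of the maximal-level $p$-point $a'$ appearing in Case II of the proof of Proposition~\ref{lem:symarc}, with the roles of the higher and lower level projections interchanged.
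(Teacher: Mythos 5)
Your proposal captures the high-level shape of the argument — pull the arc back under $\pi_p\circ\sigma^{-\text{(some level)}}$ so that $y$ (or a point replacing it) lands at $c$, then compare the two sides — but it quietly assumes exactly the thing that the paper works hardest to establish, and it misallocates where the basicness hypothesis and the Fibonacci condition actually intervene.

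The crucial unaddressed point: you pull back by $L' := L_p(y)$ and define $w'$ as the first $p$-point on the $m$-side of $y$ with $L_p > L'$, then proceed as though $w' = m$ so that the mirror of $m$ across $y$ is a meaningful construction. But a priori there could be a $p$-point $b \in (y,m)$ with $L_p(y) < L_p(b) < L_p(m)$, in which case $w' = b \ne m$, $\pi_{p+L'}$ is not injective on $[y,m]$, and the ``mirror of $m$'' you want lies outside the image of $[y,v']$. The paper's proof is structured precisely around this difficulty: it pulls back by $L := L_p(b)$ (where $b$ is the $p$-point of highest level on $[y,m)$), runs the whole $\beta$-neighbor argument, and only at the very end uses the basicness of $[x,y]$ to exclude $b\ne y$ (via the quasi-$p$-symmetric sub-arc $[x,u]$ it would produce). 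Your proposal never rules this case out, and your mirror-pairing ``key observation'' depends on it.

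Second, your ``hard case'' is a single case ($|\tilde v' - c| < |\tilde m - c|$) which you say is handled by invoking basicness and \eqref{eq:cond4} to re-extract a smaller quasi-$p$-symmetric arc. The paper's analysis is a three-way case split on how $c_{S_j}$, $c_{S_k}$, $c_{S_l}$ relate via $\beta$-neighborship: $l = Q^2(k)$ (the easy case, resolved by \eqref{eq:cond4} and Remark~\ref{rem:cond4}), $k = Q^2(l)$ (resolved by Remark~\ref{rem:Hofbauer} and the fact that $y$ is the midpoint of its link-tip, \emph{not} by basicness), and $j = Q(k) = Q(l)$ (resolved by pulling back another $S_j$ iterates and applying \eqref{eq:cond4} twice). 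Your proposal subsumes the last two into a vague ``either the construction adjusts or basicness is violated,'' which is not an argument; in particular the $j=Q(k)=Q(l)$ branch genuinely requires the extra $S_j$ pullback and is not a basicness contradiction. The proposal therefore has a real gap rather than being a variant route to the same result.
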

\begin{remark}
The assumption that $[x,y]$ is basic is essential. Without it, we would have a counter-example in
$[x,y] = [x^3,x^{30}]$ in Figure~\ref{fig:example}. The quasi-$p$-symmetric arc $[x^3,x^{30}]$ is
indeed not basic, because $[x^3,x^6]$ is a shorter quasi-$p$-symmetric arc in the figure. There is
a point $\wasv = x^{32}$ beyond $y$ with $L_p(\wasv) = L_p(x^{32}) = 3 > 1 = L_p(y)$, making it
impossible that $y$ is the midpoint of a quasi-$p$-symmetric arc stretching unto $m$.
\end{remark}
\begin{proof}
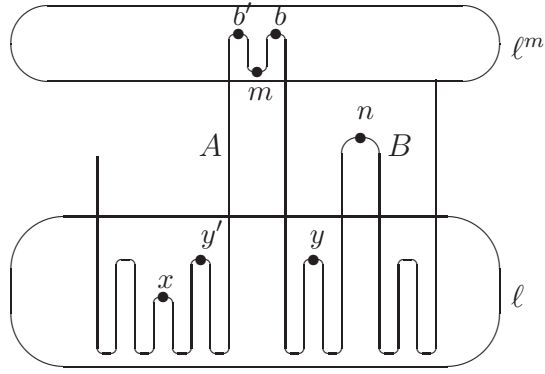
\begin{figure}[ht]
\unitlength=10mm
\begin{picture}(4,5.1)(0,0.5)
\put(0,1){\line(0,1){2.5}}
\put(0.125, 1){\oval(0.25,0.25)[b]}
\put(0.25,1){\line(0,1){1}}
\put(0.375, 2){\oval(0.25,0.25)[t]}
\put(0.5,1){\line(0,1){1}}
\put(0.625, 1){\oval(0.25,0.25)[b]}
\put(0.75,1){\line(0,1){0.5}}
\put(0.875, 1.5){\oval(0.25,0.25)[t]}
\put(0.8, 1.75){\small $x$}  \put(0.875, 1.625){\circle*{0.15}}
\put(1,1){\line(0,1){0.5}}
\put(1.125, 1){\oval(0.25,0.25)[b]}
\put(1.25,1){\line(0,1){1}}
\put(1.375, 2){\oval(0.25,0.25)[t]}
\put(1.375, 2.35){\small $y'$}  \put(1.375, 2.125){\circle*{0.15}}
\put(1.5,1){\line(0,1){1}}
\put(1.625, 1){\oval(0.25,0.25)[b]}
\put(1.75,1){\line(0,1){4}}
\put(1.875, 5){\oval(0.25,0.25)[t]}
\put(1.8, 5.25){\small $b'$}  \put(1.875, 5.125){\circle*{0.15}}
\put(2,4.75){\line(0,1){0.25}}
\put(2.125, 4.75){\oval(0.25,0.25)[b]}
\put(2, 4.25){\small $m$}  \put(2.125, 4.625){\circle*{0.15}}
\put(2.25,4.75){\line(0,1){0.25}}
\put(2.375, 5){\oval(0.25,0.25)[t]}
\put(2.35, 5.25){\small $b$}  \put(2.375, 5.125){\circle*{0.15}}
\put(2.5,1){\line(0,1){4}}
\put(2.625, 1){\oval(0.25,0.25)[b]}
\put(2.75,1){\line(0,1){1}}
\put(2.875, 2){\oval(0.25,0.25)[t]}
\put(2.825, 2.35){\small $y$}  \put(2.875, 2.125){\circle*{0.15}}
\put(3,1){\line(0,1){1}}
\put(3.125, 1){\oval(0.25,0.25)[b]}
\put(3.25,1){\line(0,1){2.5}}
\put(3.5, 3.5){\oval(0.5,0.5)[t]}
\put(3.45, 4){\small $\wasv$}  \put(3.5, 3.75){\circle*{0.15}}
\put(3.75,1){\line(0,1){2.5}}
\put(3.875, 1){\oval(0.25,0.25)[b]}
\put(4,1){\line(0,1){1}}
\put(4.125, 2){\oval(0.25,0.25)[t]}
\put(4.25,1){\line(0,1){1}}
\put(4.375, 1){\oval(0.25,0.25)[b]}
\put(4.5,1){\line(0,1){3.52}}
\put(2.1,1.7){\oval(6.5,2)}\put(5.5,1.5){$\ell$}
\put(2.1,5){\oval(6.5,1)}\put(5.5,4.8){$\ell^m$}
\put(1.35,3.5){$A$}\put(3.85,3.5){$B$}
\end{picture}
\caption{The arcs $A$ and $B$ and the relevant points for Proposition~\ref{lem:until_lm}, which is meant
to show that the point $\wasv$ does not exist in $B$.}
\label{fig:Lemma9}
\end{figure}
A quasi-$p$-symmetric arc is not contained in a single link, so $[x,m] \not\subset \ell$.
Let $H = [x,\wasv] \supset A$ be the smallest arc containing a point $\wasv$ `beyond' $y$ with
$L_p(\wasv) > L_p(y)$.

Corollary~\ref{cor:spiral_linktips} implies that the arc $[x,m]$ contains a $p$-point $y'$ with
$L_p(y') = L_p(y)$. Let $b$ and $b'$ be the $p$-points having the highest $p$-level on the arcs
$[y,m)$ and $[y',m)$ respectively. By symmetry, $L_p(b) = L_p(b')$, and possibly $b = y$, $b' = y'$.
Let $z \in [x,y']$ be the point closest to $y'$ such that $L_p(z) > L_p(b)$; possibly $z = x$.
Since $b' \in [y',m)$, we have
\[
L_p(y) = L_p(y') \le L_p(b) = L_p(b') < L_p(m).
\]
Take $L := L_p(b)$ and let $\tilde H = \pi_p \circ \sigma^{-L}(H)$. Since $y$ is the midpoint of its
link-tip, $[y, \wasv] \not\subset \ell$. Therefore
$\pi_p^{-1}(c) \cap \sigma^{-L}(H) \supset \{ \sigma^{-L}(b), \sigma^{-L}(b')\}$, and
$\tilde z = \pi_p \circ \sigma^{-L}(z)$ and $\tilde \wasv = \pi_p \circ \sigma^{-L}(\wasv)$ have
$\tilde m = \pi_p \circ \sigma^{-L}(m)$ as common $\beta$-neighbor, see Figure~\ref{fig:Lemma9b}.
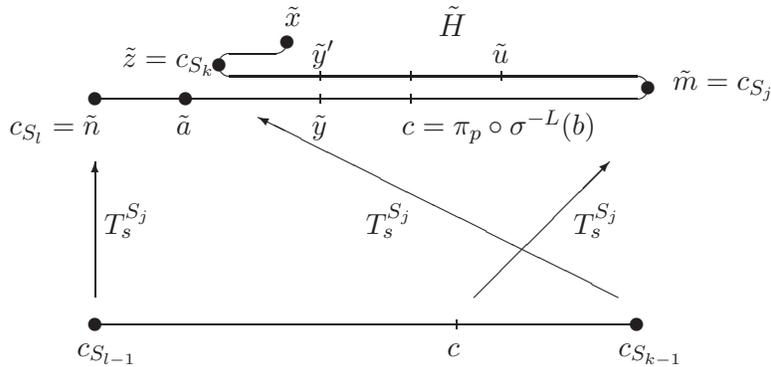
\begin{figure}[ht]
\unitlength=12mm
\begin{picture}(8,4.1)(0,-2.5)
\put(1,0.5){\line(1,0){6}}
\put(3, 1.125){\oval(0.25,0.25)[br]}
\put(3.125, 01.125){\circle*{0.15}}\put(3.1, 1.3){\small $\tilde x$}
\put(2.5,1){\line(1,0){0.5}}
\put(2.5, 0.875){\oval(0.25,0.25)[l]}
\put(2.5,0.75){\line(1,0){4}}
\put(2.375, 0.875){\circle*{0.15}}\put(1.3, 0.85){\small $\tilde z = c_{S_k}$}
\put(6.5,0.75){\line(1,0){0.5}}
\put(7, 0.625){\oval(0.25,0.25)[r]}
\put(7.125, 0.625){\circle*{0.15}}\put(7.4, 0.6){\small $\tilde m = c_{S_j}$}
\put(1, 0.5){\circle*{0.15}}\put(0.05, 0.1){\small $c_{S_l} = \tilde \wasv$}
\put(2, 0.5){\circle*{0.15}}\put(1.9, 0.1){\small $\tilde a$}
\put(4.5, 0.45){\line(0,1){0.1}}\put(4.4, 0.1){\small $c = \pi_p \circ \sigma^{-L}(b)$}
\put(3.5, 0.45){\line(0,1){0.1}}\put(3.4, 0.1){\small $\tilde y$}
\put(3.5, 0.7){\line(0,1){0.1}} \put(3.4, 0.9){\small $\tilde y'$}
\put(5.5, 0.7){\line(0,1){0.1}} \put(5.4, 0.9){\small $\tilde u$}
\put(4.5, 0.7){\line(0,1){0.1}}
\put(4.8, 1.2){$\tilde H$}
\put(1,-1.7){\vector(0,1){1.5}}\put(1.1, -1){\small $T_s^{S_j}$}
\put(5.2,-1.7){\vector(1,1){1.5}}\put(6.3, -1){\small$T_s^{S_j}$}
\put(6.8,-1.7){\vector(-2,1){3.99}}\put(4, -1){\small$T_s^{S_j}$}
\put(1,-2){\line(1,0){6}}
\put(1, -2){\circle*{0.15}}\put(0.8, -2.35){\small $c_{S_{l-1}}$}
\put(7, -2){\circle*{0.15}}\put(6.8, -2.35){\small $c_{S_{k-1}}$}
\put(5, -2.05){\line(0,1){0.1}}\put(4.9, -2.35){\small $c$}
\end{picture}
\caption{The arc $\tilde H$ drawn as multiple curve, its preimage under $T_s^{S_j}$ and the relevant
points on them.}
\label{fig:Lemma9b}
\end{figure}
Since $L_p(z) > L_p(b)$ there is $k$ such that $\tilde z = c_{S_k}$. Also take $l$ such that
$\tilde \wasv = c_{S_l}$ and $j$ such that $\tilde m = c_{S_j}$. Let $\tilde y = \pi_p \circ \sigma^{-L}(y)$
and $\tilde y' = \pi_p \circ \sigma^{-L}(y')$.

We claim that there is a point $a \in [\wasv,m]$ such that
\[
\tilde a :=  \pi_p \circ \sigma^{-L}(a)  \in [c_{S_l} , \tilde y] \quad\text{ and }\quad T_s(\tilde a) = T_s(\tilde m).
\]
Since $c_{S_j}$ is $\beta$-neighbor to both $c_{S_l}$ and $c_{S_k}$, we have three cases:
\begin{enumerate}
\item
$j = Q(k)$ and $l = Q(j)$, so $l = Q^2(k)$. In this case, Equation~\eqref{eq:cond4} and
Remark~\ref{rem:cond4} imply that $|c - c_{S_l}| > |c - c_{S_{Q(k)}}|$, so $[c_{S_l}, c]$
contains the required point $\tilde a$ with $T_s(\tilde a) = T_s(\tilde m)$. By the same token,
$|c_{S_k} - c| < |c_{S_j} - c| = \frac12 |\tilde a - \tilde m|$. Since
$|\tilde y - c| = |\tilde y'- c| < |c_{S_k} - c|$, we indeed obtain that
$\tilde a \in [ c_{S_l}, \tilde y]$.
\item
$j = Q(l)$ and $k = Q(j)$, so $k = Q^2(l)$. Then Remark~\ref{rem:Hofbauer} implies that
$|c - c_{S_k}| > |c - c_{S_l}|$. But this would mean that the arc $[\wasv,m]$ is shorter than
$[z, m]$ and in particular that $[y,\wasv] \subset \ell$, contradicting that $y$ is the midpoint
of its link-tip.
\item
$j = Q(k) = Q(l)$. In this case, we pull $\tilde H$ back for another $S_j$ iterates, or more precisely,
we look at the arc $\pi_p \circ \sigma^{-S_j-L}(H)$. The endpoints of this arc are $c_{S_{k-1}}$ and
$c_{S_{l-1}}$ which are therefore $\beta$-neighbors. If $l - 1 = Q(k-1)$, then we find
\[
Q(k) = Q(l) = Q(Q(k-1)+1)
\]
which contradicts Condition \eqref{eq:cond4} with $k$ replaced by $k-1$. If $k-1 = Q(l-1)$, then we find
\[
Q(l) = Q(k) = Q(Q(l-1)+1)
\]
which contradicts Condition \eqref{eq:cond4} with $k$ replaced by $l-1$.
\end{enumerate}
This proves the claim.

Suppose now that $\tilde y \neq c$ (\ie $y \neq b$). Then $b, b' \notin \ell$ because $y$ has the largest
$p$-level in its link-tip. Since $|c_{S_k} - c| < |c - \tilde m|$, there is a point $u \in [z,m]$ such that
$\tilde u = \pi_p \circ \sigma^{-L}(u) \in [c,\tilde m]$ and $T_s(\tilde u) = T_s(\tilde y)$. This means
that $[x,u]$ is a quasi-$p$-symmetric arc properly contained in $[x,m]$, contradicting the assumption that
$[x,y]$ is a basic quasi-symmetric arc.

Therefore $y = b$, so there are no $p$-points between $y$ and $m$ of level higher than $L_p(y)$. Instead,
the arc $[a,m]$ has midpoint $y$, and is the required quasi-$p$-symmetric arc, proving the lemma.
\end{proof}
\begin{remark}\label{rem:level-zero}
Let $A = [x,y]$ be a basic quasi-$p$-symmetric arc such that $x$ and $y$ are the midpoints of the
link-tips of $A$ and $L_p(x) > L_p(y)$. Let $\ell^m$ be the link which contains the midpoint $m$ of
$A$, and let $A_m$ be the arc-component of $\ell^m$ containing $m$. Then, by the lemma above,
$A \setminus (\ell\mbox{-tips } \cup A_m)$ does not contain any $p$-point $z$ with $L_p(z) \ge L_p(y)$.
\end{remark}

\section{Link-Symmetric Arcs}\label{sec:link}

\begin{defi}\label{def:decreasing}
We say that an arc $[x, y]$ is \emph{decreasing $($basic$)$ quasi-$p$-symmetric} if it is the
concatenation of (basic) quasi-$p$-symmetric arcs where the $p$-levels of the midpoints decrease,
\ie if there are $p$-points $x = x^0, x^1, x^2, \dots , x^{n-1}$ and $x^n = y$ can be
a $p$-point or not, such that the following hold:
\begin{itemize}
\item[(i)] $[x^{i-1}, x^{i+1}]$ is a (basic) quasi-$p$-symmetric arc with midpoint $x^{i}$, for
$i = 1, \dots , n-1$. (By definition of a (basic) quasi-$p$-symmetric arc, the points $x^{2i}$ all
belong to a single link, and the points $x^{2i-1}$ belong to a single link as well.)
\item[(ii)] $L_p(x^i) > L_p(x^{i+1})$, for $i = 1, \dots , n-1$ (and if $y$ is a $p$-point then also
$L_p(x^{n-1}) > L_p(y)$).
\end{itemize}
Similarly, we say that the arc $[x, y]$ is \emph{increasing $($basic$)$ quasi-$p$-symmetric} if it is the
concatenation of (basic) quasi-$p$-symmetric arcs where the $p$-levels of the midpoints increase.
\end{defi}
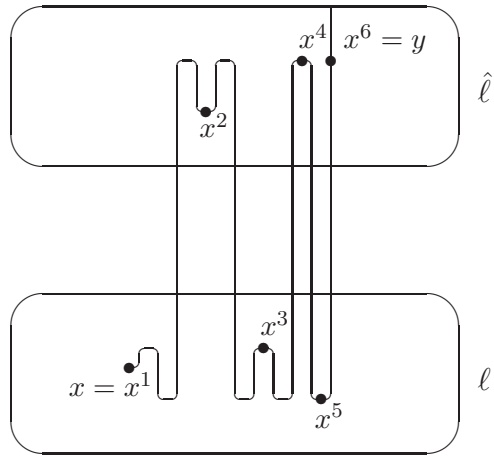
\begin{figure}[ht]
\unitlength=8.5mm
\begin{picture}(10,8)(3,10.5)
\put(6.35,12.5){\oval(0.3,0.3)[br]}\put(6.65,12.5){\oval(0.3,0.3)[t]}
\put(6.95,12){\oval(0.3,0.3)[b]}
\put(6.8,12){\line(0,1){0.5}}
\put(6.35,12.35){\circle*{0.15}}\put(5.4,11.9){\small $x = x^1$}
\put(7.1,12){\line(0,1){5}}\put(7.25, 17){\oval(0.3,0.3)[t]}
\put(7.55,16.5){\oval(0.3,0.3)[b]}\put(7.85, 17){\oval(0.3,0.3)[t]}
\put(7.7,16.5){\line(0,1){0.5}}\put(7.4,16.5){\line(0,1){0.5}}
\put(8,12){\line(0,1){5}}
\put(7.55,16.35){\circle*{0.15}}\put(7.45,15.95){\small $x^2$}
\put(8.15, 12){\oval(0.3,0.3)[b]}\put(8.45,12.5){\oval(0.3,0.3)[t]}
\put(8.75,12){\oval(0.3,0.3)[b]}
\put(8.3,12){\line(0,1){0.5}}\put(8.6,12){\line(0,1){0.5}}
\put(8.45,12.65){\circle*{0.15}}\put(8.4,12.85){\small $x^3$}
\put(8.9,12){\line(0,1){5}}\put(9.05, 17){\oval(0.3,0.3)[t]}
\put(9.2,12){\line(0,1){5}}\put(9.35, 12){\oval(0.3,0.3)[b]}
\put(9.35,11.85){\circle*{0.15}}\put(9.25,11.4){\small $x^5$}
\put(9.5,12){\line(0,1){6}}
\put(9.5,17.15){\circle*{0.15}}\put(9.7,17.35){\small $x^6 = y$}
\put(9.05,17.15){\circle*{0.15}}\put(9,17.35){\small $x^4$}
\put(5,11){\line(1,0){6}}\put(5,13.5){\line(1,0){6}}
\put(5,12.25){\oval(1,2.5)[l]}\put(11,12.25){\oval(1,2.5)[r]}
\put(5,15.5){\line(1,0){6}}\put(5,18){\line(1,0){6}}
\put(5,16.75){\oval(1,2.5)[l]}\put(11,16.75){\oval(1,2.5)[r]}
\put(11.8,16.5){$\hat \ell$}\put(11.8,12){$\ell$}
\end{picture}
\caption{Illustration of a basic decreasing quasi-$p$-symmetric arc. The point $y$ is not a
$p$-point here; instead, the arc $A$ goes straight through $\hat \ell$ at $y$.}
\label{fig:decreas-quasi-sym}
\end{figure}
\begin{ex} Consider the Fibonacci inverse limit space, and let our chain $\chain_p$
be such that $p$-points with $p$-levels $1$ and $14$ belong to the same link $\ell$, but $p$-points with
$p$-level $9$ are not contained in $\ell$. Since $p$-points with $p$-level 14 belong to the same link $\ell$
as $p$-points with $p$-level $1$, also $p$-points with $p$-levels $22$, $35$, $56$ and $77$
belong to $\ell$.
Let $p$-points with $p$-level $43$ belong to the same link as $p$-points with $p$-level $9$.
\begin{itemize}
\item[(1)] {\bf Example of a basic decreasing quasi-$p$-symmetric arc.} Let $A = [y^0, y^{12}]$ be an
arc with the following folding pattern (where the subscripts count important $p$-points):
$$
\underbrace{{\bf 1 \ 22 \ 77}_2 \ {\bf 22 \ 1} \ 9 \ 43_6 \ 9 \ {\bf 1}}_{\textrm{basic}} \overbrace{{\bf 22}_9 \ {\bf 1} \ 9_{11} \ {\bf 1}_{12}}^{\textrm{basic}}
$$
Let $x^i$ be as in the above definition. Then $x^1 = y^2$, $x^2 = y^6$, $x^3 = y^9$, $x^4 = y^{11}$,
and $x^5 = y^{12}$. So $[y^2, y^9]$ is basic quasi-$p$-symmetric with midpoint $y^6$, $[y^6, y^{11}]$
is basic quasi-$p$-symmetric with midpoint $y^9$, and $[y^9, y^{12}]$ is basic quasi-$p$-symmetric with
the midpoint $y^{11}$. Also
$L_p(y^2) = 77 > L_p(y^6) = 43 > L_p(y^9) = 22 > L_p(y^{11}) = 9 > L_p(y^{12}) = 1$.
\item[(2)] {\bf Example of a non-basic decreasing quasi-$p$-symmetric arc.} Let $[y^0, y^{72}]$ be an
arc with the following folding pattern:
$$
%\hspace{-1cm}
\overbrace{\underbrace{{\bf 1 \ 22 \ 1 \ 56}_3 \, {\bf 1 \ 22 \ 1} \ 9 \ {\bf 1}}_{\textrm{basic}} \ 4 \ 1 \ 0 \ 2 \ 0 \ 1 \ 0 \ 3 \ 0 \ {\bf 1} \ 6 \ {\bf 1 \ 14 \ 1 \ 35}_{23} {\bf 1 \ 14 \ 1} \ 6 \ {\bf 1} \ 0 \ 3 \ 0 \ 1 \ 0 \ 2 \ 0 \ 1 \ 4 \ \underbrace{{\bf 1} \ 9 \ {\bf 1}}}^{\textrm{quasi-$p$-symmetric}}
$$
$$
\underbrace{\overbrace{{\bf 22}_{41} {\bf 1} \ 9 \ {\bf 1}}^{\textrm{basic}}
  \ 4 \ 1 \ 0 \ 2 \ 0 \ 1 \ 0 \ 3 \ 0 \ {\bf 1} \ 6 \ {\bf 1 \ 14}_{57} {\bf 1}
  \ 6 \ {\bf 1} \ 0 \ 3 \ 0 \ 1 \ 0 \ 2 \ 0 \ 1 \ 4 \ \overbrace{{\bf
      1} \ 9 \ {\bf 1}_{72}}^{\textrm{sym}}}_{\textrm{quasi-$p$-symmetric}}
$$
Let $x^i$ be again as in the above definition. Then $x^1 = y^3$, $x^2 = y^{23}$, $x^3 = y^{41}$,
$x^4 = y^{57}$, and $x^5 = y^{72}$. So, arcs $[y^3, y^{41}]$, $[y^{23}, y^{57}]$ and $[y^{41}, y^{72}]$
are quasi-$p$-symmetric, and
$L_p(y^3) = 56 > L_p(y^{23}) = 35 > L_p(y^{41}) = 22 > L_p(y^{57}) = 14 > L_p(y^{72}) = 1$.
\item[(3)] {\bf Example of an arc that is the concatenation of two quasi-$p$-symmetric arcs (one of them
is basic), but is not decreasing quasi-$p$-symmetric.} Let $[y^0, y^{40}]$ be an
arc with the following folding pattern:
$$
%\hspace{-2cm}
\underbrace{{\bf 1 \ 22 \ 77}_2 \, {\bf 22 \ 1} \ 9 \ 43_6 \, 9 \ {\bf 1}}_{\textrm{basic}}\underbrace{{\bf 22}_9 \, {\bf 1} \ 9_{11} {\bf 1}_{12} 4 \ 1 \ 0 \ 2 \ 0 \ 1 \ 0 \ 3 \ 0 \ {\bf 1} \ 6 \ {\bf 1 \ 14}_{25} {\bf 1} \ 6 \ {\bf 1} \ 0 \ 3 \ 0 \ 1 \ 0 \ 2 \ 0 \ 1 \ 4 \ {\bf 1} \ 9 \ {\bf 1}_{40}}_{\textrm{quasi-$p$-symmetric}}
$$
Then $[y^2, y^9]$ is basic quasi-$p$-symmetric with midpoint $y^6$, $[y^6, y^{11}]$ is basic
quasi-$p$-symmetric with midpoint $y^9$, and $[y^9, y^{12}]$ is basic quasi-$p$-symmetric with the
midpoint $y^{11}$. However, $[y^9, y^{40}]$ is quasi-$p$-symmetric with midpoint $y^{25}$ and
$[y^6, y^{25}]$ is neither basic quasi-$p$-symmetric, nor quasi-$p$-symmetric. So $A = [y^0, y^{40}]$
is not a decreasingly quasi-$p$-symmetric arc. Note that $[y^0, y^{12}]$ is a decreasing
quasi-$p$-symmetric arc.
\end{itemize}
\end{ex}

\begin{proposition}\label{lem:not-link-sym}
Let $A$ be a non-basic quasi-$p$-symmetric arc. Then there are $k, n, m, d \in \None$, $d < k$, such that
$$
A \cap E_p = \{ x^0, \dots , x^k, \dots , x^{k+n}, \dots , x^{k+n+m} \},
$$
$[x^0, x^k]$ is a basic quasi-$p$-symmetric arc with midpoint $x^{k-d}$  and $[x^k, x^{k+n}]$ is
$p$-symmetric. Moreover,
\begin{itemize}
\item[(i)] If $[x^{k+n}, x^{k+n+m}]$ is $p$-symmetric, then
$[x^{-k+m/2}, x^{k+n+3m/2}]$ is not $p$-link-symmetric.
\item[(ii)] If $[x^{k+n}, x^{k+n+m}]$ is a basic quasi-$p$-symmetric arc,
then $A$ is contained in a decreasing quasi-$p$-symmetric arc consisting of at least two
quasi-$p$-symmetric arcs. More precisely, $[x^{-k-n/2}, x^{k+n/2}]$ and
$[x^{k+n/2}, x^{k+2m+3n/2}]$ are the quasi-$p$-symmetric arcs contained in the decreasing
quasi-$p$-symmetric arc $[x^{-k-n/2}, x^{k+2m+3n/2}]$ containing $A$.
\end{itemize}
\end{proposition}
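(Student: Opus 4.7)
The plan is to peel off a smallest basic quasi-$p$-symmetric piece from one end of $A$, then use the extension results of Appendix~\ref{sec:furtherlemmas} to pin down the rest of $A$ into one of the two advertised patterns.

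First, I exploit non-basicness to extract the basic piece $[x^0, x^k]$. By Definition~\ref{basic-quasi-p-symmetric} there is a $p$-point $w \in (u,v)$ with either $[u,w] \subset [u,m]$ or $[w,v] \subset [m,v]$ quasi-$p$-symmetric; after fixing an orientation of $A$ we may assume the former. Among all such $w$ I choose the one minimising the number of $p$-points in $[u,w]$, so that $[u,w]$ is itself basic; relabel $x^0 := u$ and $x^k := w$, with midpoint $x^{k-d}$ for some $0 < d < k$. Corollary~\ref{cor:qusi_symmetry_type} then orders the endpoint levels, and after appropriate relabelling we may assume $L_p(x^0) > L_p(x^{k-d}) > L_p(x^k)$.

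Next I apply Proposition~\ref{lem:until_lm} to $[x^0, x^k]$: this produces a point $a \in A$ past $x^k$ with $[x^{k-d}, a]$ quasi-$p$-symmetric and centred at $x^k$. The ambient quasi-symmetry of $A$, together with Lemma~\ref{lem:maxL} controlling maximal $p$-levels inside a quasi-symmetric arc, furnishes the matching symmetric continuation on both sides, upgrading $[x^{k-d}, a]$ to a $p$-symmetric arc; writing $a = x^{k+n}$ gives $[x^k, x^{k+n}]$ $p$-symmetric, which is the structural skeleton demanded by the statement. A further application of Proposition~\ref{lem:symarc} or Proposition~\ref{lem:until_lm} at $x^{k+n}$ isolates the next piece $[x^{k+n}, x^{k+n+m}]$, which by the dichotomy of those lemmas is either $p$-symmetric (case (i)) or basic quasi-$p$-symmetric (case (ii)).

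In case (ii), the three successive midpoints $x^{k-d}$, $x^k$, and the midpoint of $[x^{k+n}, x^{k+n+m}]$ have strictly decreasing $p$-levels. Applying Proposition~\ref{lem:symarc} once at each outer end adjoins the unique $p$-symmetric mirror pieces, and the resulting assembly is precisely the decreasing quasi-$p$-symmetric concatenation $[x^{-k-n/2}, x^{k+n/2}] \cup [x^{k+n/2}, x^{k+2m+3n/2}]$ containing $A$; the outer indices are obtained by counting the $p$-points adjoined in each extension step. In case (i), I argue by contradiction: if $[x^{-k+m/2}, x^{k+n+3m/2}]$ were $p$-link-symmetric, the putative axis of link-symmetry would sit between the two $p$-symmetric sub-arcs $[x^k, x^{k+n}]$ and $[x^{k+n}, x^{k+n+m}]$, and Proposition~\ref{prop:chains}(2) (each link has an intrinsic turning level) would force the visited-link sequences on the two sides to match in size, giving $m = n$; but then $A$ would already be $p$-symmetric, contradicting the quasi-symmetry hypothesis. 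The main obstacle is this last step: translating the size-mismatch $m \neq n$ into a concrete failure of the link-visit sequences to coincide around the axis, which is where the properties (2)--(4) of Proposition~\ref{prop:chains} together with the highest-level control of Lemma~\ref{lem:maxL} must be combined carefully.
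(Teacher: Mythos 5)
Your structural setup (extract a minimal basic quasi-$p$-symmetric sub-arc $[x^0,x^k]$ with midpoint $x^{k-d}$, identify $[x^k,x^{k+n}]$ as the $p$-symmetric core, and split into the two cases for $[x^{k+n},x^{k+n+m}]$) is in the right spirit, though the justification that $[x^k,x^{k+n}]$ is $p$-symmetric is muddled: it does not come from ``upgrading'' the output of Proposition~\ref{lem:until_lm}, but is immediate from the definition of quasi-$p$-symmetry (the middle of $A$ after removing link-tips is $p$-symmetric, and $x^0,x^k,x^{k+n},x^{k+n+m}$ all lie in a single link).

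The real content of the proposition is in (i) and (ii), and there both of your arguments have genuine gaps. For (i), your contradiction hinges on the assertion that $p$-link-symmetry of $[x^{-k+m/2},x^{k+n+3m/2}]$ would force $m=n$. This is wrong on two counts. First, the axis of any putative $p$-link-symmetry is the point with the highest $p$-level, which is $x^{k+n/2}$ (the midpoint of the central $p$-symmetric piece), not a point ``between $[x^k,x^{k+n}]$ and $[x^{k+n},x^{k+n+m}]$''. Second, $p$-link-symmetry carries no information about index counts: distinct arc-components in a single link can contain different numbers of $p$-points, so matching link sequences do not force matching numbers of $p$-points, and in any case equality of the combinatorial lengths $m$ and $n$ would still say nothing about equality of $p$-levels, which is what $p$-symmetry requires. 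The paper's actual argument is entirely different: it first handles the base case $L_p(x^{k+n+m})=1$, where Lemma~\ref{lem:maxL} rules out a quasi-$p$-symmetric arc whose midpoint has $p$-level~$1$ and whose endpoints are both $p$-points, so that $[x^{k+n+m/2},x^{k+n+3m/2}]$ fails to be $p$-link-symmetric; it then pulls $A$ back by $\sigma^{-L+1}$ to land in the base case, and finally pushes the failure of link-symmetry forward by $\sigma^j$ via an induction that uses Proposition~\ref{prop:chains} at each step. None of this appears in your sketch, and you yourself flag the step you do not know how to do.

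For (ii), the one sentence ``apply Proposition~\ref{lem:symarc} once at each outer end'' is far too little. The paper needs a nontrivial Claim (that $x^{-d}$ is the midpoint of its arc-component $A_d$) proved by yet another $\sigma^{-L+1}$ pullback and a case analysis distinguishing whether $\sigma^{-L+1}(A)$ fits in a single link, together with uses of Corollary~\ref{cor:spiral_linktips} and Remark~\ref{rem:level-zero}, before the two flanking quasi-$p$-symmetric extensions can be legitimately assembled. Without that Claim you have no control on whether the pieces produced by Propositions~\ref{lem:symarc} and~\ref{lem:until_lm} actually glue into a single decreasing quasi-$p$-symmetric arc containing $A$.
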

\begin{proof}
Since $A$ is a non-basic quasi-$p$-symmetric arc, there is a basic quasi-$p$-symmetric arc which we can label
$[x^0,x^k]$. The arc $[x^{k},x^{k+n}]$ in the middle is $p$-symmetric by definition of quasi-$p$-symmetry,
and it has the same midpoint $x^{k+n/2}$ as $A$. The arc $[x^{k+n}, x^{k+n+m}]$ could be either
$p$-symmetric or basic quasi-$p$-symmetric.

$(i)$ Assume that $[x^{k+n}, x^{k+n+m}]$ is $p$-symmetric. Without loss of generality we can suppose
that $x^0$ and $x^{k+n+m}$ are the midpoints of the link-tips of $A$, and also that $x^k$ and $x^{k+n}$ are
the midpoints of their arc-components. Since the point $x^{k+n+m/2}$ is the
midpoint of the $p$-symmetric arc $[x^{k+n},x^{k+n+m}]$, and the symmetry of the arc $[x^k, x^{k+n}]$ can be
extended to the midpoints of its neighboring (quasi-)symmetric arcs, we obtain that $d = m/2$ and the point
$x^{k-m/2}$ is the midpoint of the basic quasi-$p$-symmetric arc $[x^0,x^k]$. Proposition~\ref{lem:symarc} implies
that we can extend $[x^0, x^{k-m/2}]$ beyond $x^0$ to obtain the arc $[x^{-k+m/2}, x^{k-m/2}]$ which is either
$p$-symmetric, or quasi-$p$-symmetric, and hence $p$-link-symmetric.

First, let us assume that $L_p(x^{k+n+m}) = 1$. Let us consider the arc $[x^{k+n+m/2}, x^{k+n+3m/2}]$.
Its midpoint $x^{k+n+m}$ has $p$-level $1$. If $L_p(x^{k+n+m-1}) = L_p(x^{k+n+m+1})$, then
$L_p(x^{k+n+m-1}) = 0$. Furthermore $x^{k+n+m-1} \ne x^{k+n+m/2}$ since a midpoint cannot have
$p$-level zero. It follows that $x^{k+n+m-2}$ and $x^{k+n+m+2}$ have different $p$-levels, and
are not in the same link, since by Lemma~\ref{prop2} there is no quasi-$p$-symmetric arc whose
both boundary points are $p$-points and whose midpoint has $p$-level $1$.

If $L_p(x^{k+n+m-1}) \ne L_p(x^{k+n+m+1})$ then again $x^{k+n+m-1}$ and $x^{k+n+m+1}$ are not in the same
link (by Lemma~\ref{prop2} there is no quasi-$p$-symmetric arc whose both boundary points are
$p$-points and whose midpoint has $p$-level $1$). In either case, $[x^{k+n+m/2}, x^{k+n+3m/2}]$ is not
$p$-link-symmetric and hence $[x^{-m/2}, x^{k+n+3m/2}]$ is not
$p$-link-symmetric. This proves statement (i) in the case that
$L_p(x^{k+n+m})=1$.

Now for the general case, let $L := L_p(x^{k+n+m})$. The basic idea is to shift $[x^0,x^{k+n+m}]$ back by
$L-1$ iterates, and use the above argument. Note that the arcs $[x^k, x^{k+n}]$ and $[x^{k+n}, x^{k+n+m}]$
are $p$-symmetric and hence $L_p(x^{k+n/2}) > L_p(x^{k+n}) = L_p(x^{k+n+m}) = L$. Then $\sigma^{-L+1}(A)$
is also a quasi-$p$-symmetric arc which is not basic, the arc $\sigma^{-L+1}([x^0, x^k])$ is a basic
quasi-$p$-symmetric arc and $L_p(\sigma^{-L+1}(x^{k+n+m})) = 1$. Let
$$
\sigma^{-L+1}(A) \cap E_p =
\{ u^0, \dots , u^{\hat k}, \dots , u^{{\hat k}+{\hat n}}, \dots , u^{{\hat k}+{\hat n}+{\hat m}} \},
$$
where $u^{\hat i} = \sigma^{-L+1}(x^i)$. (Note that $\hat k \le k$, $\hat n \le n$ and $\hat m \le m$,
since not every $\sigma^{-L+1}(x^i)$ needs to be a $p$-point.)
Then $G = [u^{-{\hat k}+{\hat m}/2}, u^{{\hat k}+{\hat n}+3{\hat m}/2}]$ is an arc
with `boundary arcs'
$[u^{-{\hat k}+\hat m/2}, u^{{\hat k}-\hat m/2}]$ and $[u^{k+n+\hat m /2}, u^{k+n+3\hat m/2}]$
and the midpoint of the latter has $p$-level $1$.
The above argument shows that
this arc cannot be $p$-link-symmetric, and therefore the whole arc
$G$ is not $p$-link-symmetric with midpoint $u = \sigma^{-L+1}(x^{k+n/2})$.

We want to prove that $\sigma^j(G)$ is also not $p$-link-symmetric with
the midpoint $\sigma^j(u)$ for $j = L - 1$.

Let us assume by contradiction  that $\sigma^j(G)$ is $p$-link-symmetric. Since $[x^{-k+m/2}, x^{k-m/2}]$ is
$p$-symmetric, also $\sigma^j([u^{{\hat k}+{\hat n}+{\hat m}/2}, u^{{\hat k}+{\hat n}+3{\hat m}/2}])$ is
$p$-link-symmetric. But $[u^{{\hat k}+{\hat n}+{\hat m}/2}, u^{{\hat k}+{\hat n}+3{\hat m}/2}]$ has its
midpoint at $p$-level $1$, and hence is not $p$-link-symmetric. Therefore, there exists $l < j$ such that
$\sigma^l([u^{{\hat k}+{\hat n}+{\hat m}/2}, u^{{\hat k}+{\hat n}+3{\hat m}/2}])$ is not $p$-link-symmetric
and $\sigma^{l+1}([u^{{\hat k}+{\hat n}+{\hat m}/2}, u^{{\hat k}+{\hat n}+3{\hat m}/2}])$ is
$p$-link-symmetric. By Proposition~\ref{prop:chains}, and since
$L_p(\sigma^l(u^{{\hat k}+{\hat n}+{\hat m}})) = l+1 \ne 0$, there exist
$v \in \sigma^l([u^{{\hat k}+{\hat n}+{\hat m}/2}, u^{{\hat k}+{\hat n}+{\hat m}}])$ and
$w \in \sigma^l([u^{{\hat k}+{\hat n}+{\hat m}}, u^{{\hat k}+{\hat n}+3{\hat m}/2}])$
such that $L_p(v) = L_p(w) = 0$, see Figure~\ref{fig:sym2}.

\begin{figure}[ht]
\unitlength=8.5mm
\begin{picture}(10,9.5)(-2,9)
\put(-2.75,16){\oval(0.5,0.5)[t]}
\put(-2.5,13){\line(0,1){3}}
\put(-2.25,13){\oval(0.5,0.5)[b]}
\put(-2,13){\line(0,1){4}}
\put(-1.75,17){\oval(0.5,0.5)[t]}
\put(-1.5,10.25){\line(0,1){6.75}}
\put(-1.25,10.25){\oval(0.5,0.5)[b]}
{\thicklines\put(-4.5,13.1){\line(1,0){5}}}
\put(-1.75,17.25){\circle*{0.13}}
\put(-2.25,12.75){\circle*{0.13}}
\put(-2.75,16.25){\circle*{0.13}}
\put(-1.25,10){\circle*{0.13}}
\put(-2,13.1){\circle*{0.13}}
\put(-1.5,13.1){\circle*{0.13}}
\put(-1.8,17.5){\small $y$}
\put(-1.35,9.6){\small $z$}
\put(-3.2,16.2){\small $x$}
\put(-2.3,13.16){\small $v$}
\put(-1.4,13.16){\small $w$}
\put(-4.8,13){\small $c$}
\put(-2,10){\oval(4,1.1)}
\put(-2,11){\oval(4,1.1)}
\put(-2,12){\oval(4,1.1)}
\put(-2,13){\oval(4,1.1)}
\put(-2,14){\oval(4,1.1)}
\put(-2,15){\oval(4,1.1)}
\put(-2,16){\oval(4,1.1)}
\put(1.5,14.5){\vector(1,0){3}}\put(3,14.7){\small $\sigma$}
\put(6.95,12){\oval(0.3,0.3)[b]}
\put(6.95,11.85){\circle*{0.13}}\put(6.5,11.3){\small $\sigma(x)$}
\put(7.1,12){\line(0,1){5}}\put(7.25, 17){\oval(0.3,0.3)[t]}
\put(7.55,16.5){\oval(0.3,0.3)[b]}\put(7.85, 17){\oval(0.3,0.3)[t]}
\put(7.7,16.5){\line(0,1){0.5}}\put(7.4,16.5){\line(0,1){0.5}}
\put(8,10.5){\line(0,1){6.5}}
\put(7.85,17.15){\circle*{0.13}}\put(7.4,17.4){\small $\sigma(v)$}
\put(8.3,10.5){\oval(0.6,0.6)[b]}
\put(8.3,10.2){\circle*{0.13}}\put(8,9.6){\small $\sigma(y)$}
\put(8.6,10.5){\line(0,1){6.4}}
\put(8.9,16.85){\oval(0.6,0.6)[t]}
\put(9.2,12.2){\line(0,1){4.7}}
\put(8.9,17.15){\circle*{0.13}}\put(8.6,17.4){\small $\sigma(w)$}
\put(9.35, 12.2){\oval(0.3,0.3)[b]}
\put(9.5,12.2){\line(0,1){0.5}}
\put(9.35,12.05){\circle*{0.13}}\put(9,11.5){\small $\sigma(z)$}
\put(8.2,12){\oval(6,2)}
\put(8.2,17){\oval(6,2)}
\end{picture}
\caption{The configuration of $p$-levels that does not exist. Here $x = \sigma^l(u^{{\hat k}+{\hat n}+{\hat m/2}})$,
$y = \sigma^l(u^{{\hat k}+{\hat n}+{\hat m}})$ and $z = \sigma^l(u^{{\hat k}+{\hat n}+{3\hat m/2}})$.}
\label{fig:sym2}
\end{figure}
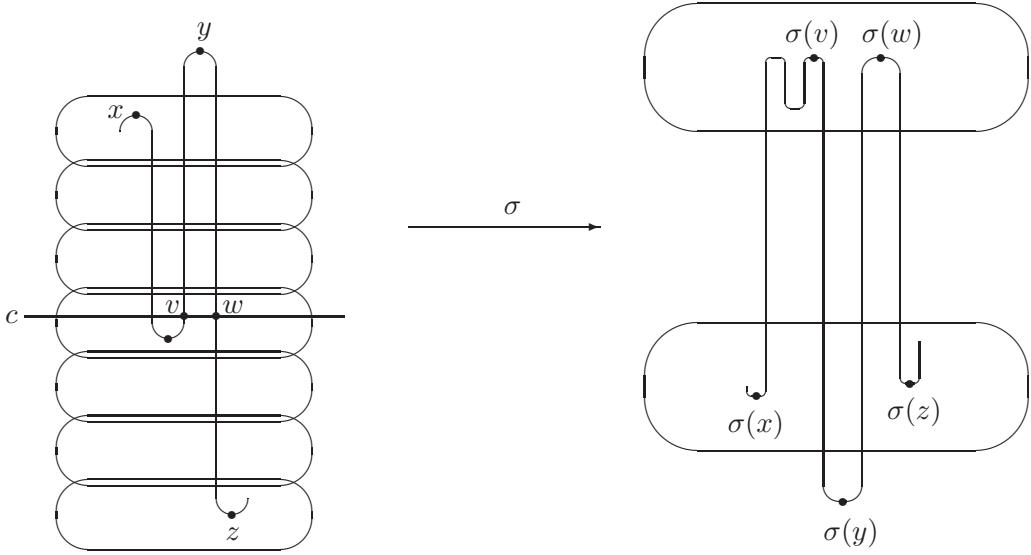

Since $\sigma^{l+1}(u^{{\hat k}+{\hat n}+{\hat m}/2})$ and
$\sigma^{l+1}(u^{{\hat k}+{\hat n}+3{\hat m}/2})$ belong to the same link and
$L_p(\sigma^{l+1}(u^{{\hat k}+{\hat n}+{\hat m}/2})) \ne L_p(\sigma^{l+1}(u^{{\hat k}+{\hat n}+3{\hat m}/2}))$, Proposition~\ref{prop:chains}
implies that
$\sigma^{l+1}(u^{{\hat k}+{\hat n}+{\hat m}/2})$ and $\sigma^{l+1}(u^{{\hat k}+{\hat n}+3{\hat m}/2})$ belong
to the same link as $\sigma(v)$ and $\sigma(w)$. But then $\sigma^{l}(u^{{\hat k}+{\hat n}+{\hat m}/2})$ and
$\sigma^{l}(u^{{\hat k}+{\hat n}+3{\hat m}/2})$ belong to the same link as $v$ and $w$, contradicting the
choice of $l$.

$(ii)$ The rough idea of this proof is as follows: Whenever $[x^{k+n}, x^{k+n+m}]$ is not $p$-symmetric,
there exists $N \in \None$ such that $\sigma^{-N}(A)$ is a basic quasi-$p$-symmetric arc and we can
apply Propositions~\ref{lem:symarc} and~\ref{lem:until_lm} to obtain the arc $B \supset \sigma^{-N}(A) $
which is decreasing basic quasi-$p$-symmetric. Then $\sigma^{N}(B) \supset A$ is the required
decreasing quasi-$p$-symmetric arc.

Let us assume now that $[x^{k+n}, x^{k+n+m}]$ is basic quasi-$p$-symmetric. Let us denote by $\ell$
the link which contains $x^0$. Then $x^k, x^{k+n}, x^{k+n+m} \in \ell$. We can assume without loss of
generality that $x^k$ and $x^{k+n}$ are the $p$-points in the link-tips of $[x^k, x^{k+n}]$ furthest away
from the midpoint $x^{k+n/2}$ and, similarly, $x^0$ and $x^{k+n+m}$ are the $p$-points in the link-tips
of $[x^0, x^{k+n+m}]$ furthest away from the midpoint $x^{k+n/2}$. Then from the properties of the chain
in Proposition~\ref{prop:chains} we conclude that
$L_p(x^0) = L_p(x^k) = L_p(x^{k+n}) = L_p(x^{k+n+m})$. Let us denote
by $x^a$ and $x^b$ the midpoints of arc-components which contains $x^0$ and $x^{k+n+m}$ respectively.
Then $x^a, x^b \in \ell$ and $x^b \ne x^{k+n+m}$. Without loss of generality we can assume that
$L_p(x^a) > L_p(x^b)$.

Since $x^{k-d}$ is the midpoint of $[x^0, x^k]$ and $A$ is quasi-$p$-symmetric, $x^{k+n+d}$ is the midpoint
of $[x^{k+n}, x^{k+n+m}]$.

By Proposition~\ref{lem:symarc}, $L_p(x^{-d}) = L_p(x^{k-d})$ and $L_p(x^{k+n+d}) = L_p(x^{k+n+m+d})$, see
Figure~\ref{fig:not-link-sym}.

Let us denote by $\ell^d$ the link which contains $x^{-d}$, and by $A_d$ the arc-component of $\ell^d$
which contains $x^{-d}$.
\\[2mm]
{\bf Claim} $x^{-d}$ is the midpoint of its arc-component $A_d$.
\\[2mm]
Consider the arc $\sigma^{-L+1}(A)$, where $L := L_p(x^b)$. Since $L_p(x^a) > L_p(x^{k+n/2}) > L_p(x^b) = L$,
the preimage $\sigma^{-L+1}(A)$ contains the points $\sigma^{-L+1}(x^b)$ with $L_p(\sigma^{-L+1}(x^b)) = 1$,
$\sigma^{-L+1}(x^a)$ and $\sigma^{-L+1}(x^{k+n/2})$ is the midpoint of $\sigma^{-L+1}(A)$.

By Corollary~\ref{cor:spiral_linktips} the arc-component containing $x^a$ also contains $p$-points $x'$
and $x''$ with the property that $[x', x'']$ is $p$-symmetric with midpoint
$x^a$ and $L_p(x') = L_p(x'') = L_p(x^b)$,
Assume also that $x'$ and $x''$ are furthest away from $x^a$ with these properties.
Therefore, $\sigma^{-L+1}(A)\cap E_p \supseteq
\{ u^0, u^{\hat a}, u^{2\hat a}, u^{2\hat a+\hat n}, u^{2\hat a+ 2\hat n} \}$,
where $u^{\hat a} = \sigma^{-L+1}(x^a)$, $u^{2 \hat a+\hat n} = \sigma^{-L+1}(x^{k+n/2})$,
$u^{2 \hat a+ 2\hat n} = \sigma^{-L+1}(x^b)$, $u^0 = \sigma^{-L+1}(x')$, $u^{2 \hat a} = \sigma^{-L+1}(x'')$
and $L_p(u^0) = L_p(u^{2\hat a}) = 1$.

Let us suppose that $\sigma^{-L+1}(A)$ is not contained in a single link. Since $\sigma^{-L+1}(x^a)$ and
$\sigma^{-L+1}(x^b)$ are contained in the same link, $\sigma^{-L+1}(A)$ is a basic quasi-$p$-symmetric arc.
Let $\ell^n$ be the link containing $u^{2\hat a + \hat n}$, and let $A_{2a + n}$ be the arc
component of $\ell^n$ containing $u^{2\hat a + \hat n}$. Since $L_p(u^{2\hat a+2\hat n}) = 1$, by Remark
\ref{rem:level-zero}, $(u^{2\hat a + \hat n}, u^{2\hat a + 2\hat n}) \setminus A_{2a + n}$ can contain at most
one $p$-point and its $p$-level is 0. Therefore $(u^{2\hat a}, u^{2\hat a + \hat n}) \setminus A_{2a + n}$
can also contain at most one $p$-point and its $p$-level is 0. By Proposition~\ref{lem:symarc},
$[u^{-\hat n}, u^{2\hat a+\hat n}]$ is either a $p$-symmetric arc, or a basic quasi-$p$-symmetric arc, see
Figure~\ref{fig:not-link-sym}. Let us denote by $A_n$ the arc-component of $\ell^n$ containing $u^{-\hat n}$.
Then $(u^{-\hat n}, u^0) \setminus A_n$ also does not contain any $p$-point with non-zero $p$-level.

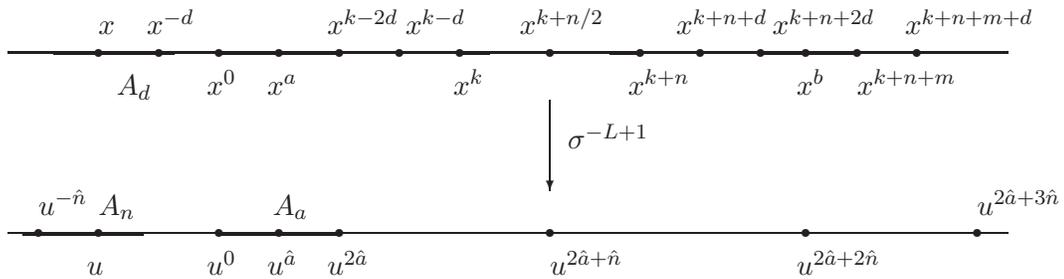
\begin{figure}[ht]
\unitlength=8mm
\begin{picture}(15,7)(0,1)
\put(-0.5,6){\line(1,0){16.6}}
\put(1, 6){\circle*{0.15}}\put(1, 6.3){\small $x$}
\put(2, 6){\circle*{0.15}}\put(1.8, 6.3){\small $x^{-d}$}
\put(1.3, 5.3){\small $A_d$}
\put(3, 6){\circle*{0.15}}\put(2.8, 5.3){\small $x^{0}$}
\put(4, 6){\circle*{0.15}}\put(3.8, 5.3){\small $x^{a}$}
\put(5, 6){\circle*{0.15}}\put(4.8, 6.3){\small $x^{k-2d}$}
\put(6, 6){\circle*{0.15}}\put(6.1, 6.3){\small $x^{k-d}$}
\put(7, 6){\circle*{0.15}}\put(6.9, 5.3){\small $x^{k}$}
\put(8.5, 6){\circle*{0.15}}\put(8, 6.3){\small $x^{k+n/2}$}
\put(10, 6){\circle*{0.15}}\put(9.8, 5.3){\small $x^{k+n}$}
\put(11, 6){\circle*{0.15}}\put(10.6, 6.3){\small $x^{k+n+d}$}
\put(12, 6){\circle*{0.15}}\put(12.2, 6.3){\small $x^{k+n+2d}$}
\put(12.75, 6){\circle*{0.15}}\put(12.6, 5.3){\small $x^{b}$}
\put(13.6, 6){\circle*{0.15}}\put(13.6, 5.3){\small $x^{k+n+m}$}
\put(14.6, 6){\circle*{0.15}}\put(14.4, 6.3){\small $x^{k+n+m+d}$}
\put(8.5, 5.2){\vector(0,-1){1.5}}\put(8.8, 4.4){\small $\sigma^{-L+1}$}
\put(-0.5,3){\line(1,0){16.6}}
\put(0, 3){\circle*{0.15}}\put(0, 3.3){\small $u^{-\hat n}$}
\put(1, 3.3){\small $A_n$}
\put(1, 3){\circle*{0.15}}\put(0.8, 2.3){\small $u$}
\put(3, 3){\circle*{0.15}}\put(2.8, 2.3){\small $u^{0}$}
\put(4, 3){\circle*{0.15}}\put(3.8, 2.3){\small $u^{\hat a}$}
\put(3.9, 3.3){\small $A_a$}
\put(5, 3){\circle*{0.15}}\put(4.8, 2.3){\small $u^{2\hat a}$}
\put(8.5, 3){\circle*{0.15}}\put(8.5, 2.3){\small $u^{2\hat a+\hat n}$}
\put(12.75, 3){\circle*{0.15}}\put(12.6, 2.3){\small $u^{2\hat a+ 2\hat n}$}
\put(15.6, 3){\circle*{0.15}}\put(15.6, 3.3){\small $u^{2\hat a+ 3\hat n}$}
\thicklines
\put(0.25,5.98){\line(1,0){2}}
\put(3,5.98){\line(1,0){2}}
\put(12,5.98){\line(1,0){1.6}}
\put(-0.25,2.98){\line(1,0){2}}
\put(3,2.98){\line(1,0){2}}
\put(7, 5.98){\line(1,0){0.5}}
\put(9.5, 5.98){\line(1,0){0.5}}
\end{picture}
\caption{\label{fig:not-link-sym}
The configuration of points on $[x^{-d}, x^{k+n+m+2d}]$ and their images under $\sigma^{-L+1}$ as in $(ii)$.}
\end{figure}

Assume by contradiction that $x^{-d}$ is not the midpoint of its arc-component $A_d$. Let us denote the midpoint
of $A_d$ by $x$, and let $u := \sigma^{-L+1}(x)$. Since $L_p(x) > L_p(x^{a})$,
also $L_p(u) > L_p(u^{\hat a})$.
Let $\ell^a$ be the link which contains $u^{\hat a}$, and let $A_a$ be the arc-component of $\ell^a$ containing
$u^{\hat a}$. Then $u \in A_n$ and $[u^{-\hat n}, u^{2\hat a+\hat n}]$ is basic quasi-$p$-symmetric. But, since
$u^{2\hat a+\hat n} \in \ell^n$ and $\sigma^{L-1}(u^{2\hat a+\hat n}) = x^{k+n/2}$,
$x^{k+n/2} \in \ell^d$. Since the arc $[x, x^{k-d}]$ is quasi-$p$-symmetric,
$[x^{k-d}, x^{k+n/2}]$ is also quasi-$p$-symmetric and
$L_p(x^{a}) > L_p(x^{k-d})$ implies $L_p(x^{k-d}) > L_p(x^{k+n/2})$, a contradiction.

Let us assume now that $\sigma^{-L+1}(A)$ is contained in a single link. Since $L_p(u) > L_p(u^{\hat a})$
and $L_p(u^0) = 1$, we have $\pi_p([u, u^0]) \subset \pi_p([u^{\hat a}, u^0])$. Then
$\sigma^{L-1}([u^{\hat a}, u^0]) \subset \ell$ implies $\sigma^{L-1}([u, u^{\hat a}]) \subset \ell$ and hence
$[x^{-d}, x^{k-d}] \subset \ell$, a contradiction.

These two contradictions prove the claim.

In the same way we can prove that $x^{k+n+m+d}$ is the midpoint of its arc-component, and by Proposition~\ref{lem:until_lm} the arc $[u^{2\hat a+\hat n}, u^{2\hat a+ 3\hat n}]$ is either $p$-symmetric,
or quasi-$p$-symmetric.

So we have proved that the arcs $[u^{-\hat n}, u^{2\hat a+\hat n}]$ and
$[u^{2\hat a+\hat n}, u^{2\hat a+ 3\hat n}]$ are both either $p$-symmetric, or quasi-$p$-symmetric. Since
$[x^{a}, x^{b}] = \sigma^{L-1}([u^{\hat a}, u^{2\hat a+ 2\hat n}])$ is
quasi-$p$-symmetric, the arcs
$\sigma^{L-1}([u^{-\hat n}, u^{2\hat a+\hat n}])$ and
$\sigma^{L-1}([u^{2\hat a+\hat n}, u^{2\hat a+ 3\hat n}])$ are both either $p$-symmetric, or
quasi-$p$-symmetric. This implies that $[x^{-2d-n/2}, x^{k+n/2}]$ and $[x^{k+n/2}, x^{k+n+m+2d+n/2}]$ are
contained in the decreasing quasi-$p$-symmetric arc
$[x^{-2d-n/2}, x^{k+n+m+2d+n/2}]$ containing $A$.
\end{proof}
\begin{ex}
(Example for $(ii)$ of Proposition~\ref{lem:not-link-sym}.)
Let us consider the Fibonacci map and the corresponding inverse limit space.
The arc-component $\C0$ contains an arc $A = [x^0, x^{77}]$ with the following folding pattern:
$$
1 \ 9_1 \underbrace{{\bf 1}_2 \, {\bf 22 \ 1 \ 56 \ 1 \ 22 \ 1} \ 9 \ {\bf 1}}_{\textrm{basic}} \ 4 \ 1 \ 0 \ 2 \ 0 \ 1 \ 0 \ 3 \ 0 \ 1 \ 6
$$
$$
\overbrace{\underbrace{{\bf 1}_{22} {\bf 14 \ 1 \ 35 \ 1 \ 14 \ 1} \ 6 \ {\bf 1}}_{\textrm{basic}} \ 0 \ 3 \ 0 \ 1 \ 0 \ 2 \ 0 \ 1 \ 4 \ {\bf 1} \ 9 \ {\bf 1 \ 22 \ 1} \ 9 \ {\bf 1} \ 4 \ 1 \ 0 \ 2 \ 0 \ 1 \ 0 \ 3 \ 0 \ \underbrace{{\bf 1} \ 6 \ {\bf 1 \ 14 \ 1}_{60}}_{\textrm{basic}}}^{\textrm{quasi-$p$-symmetric}}
$$

$$
6 \ 1 \ 0 \ 3 \ 0 \ 1 \ 0 \ 2 \ 0 \ 1 \ 4 \ \underbrace{1 \ 9 \ {\bf 1}_{74}}_{\textrm{sym}} 4_{75} 1 \ 0
$$
We can choose a chain $\chain_p$ such that $p$-points with $p$-levels 1, 14, 22, 35 and 56 belong
to the same link. Then the arc $[x^{22}, x^{60}]$
is quasi-$p$-symmetric, and it is not basic. The arc $\sigma^{-13}([x^{22}, x^{60}])$ is basic
quasi-$p$-symmetric with the folding pattern $1 \ 22 \ 1 \ 9 \ 1$.
So we can apply Propositions~\ref{lem:symarc} and~\ref{lem:until_lm}
as in the above proof. The arc $[x^2, x^{74}]$ is decreasing
quasi-$p$-symmetric. Note that the arc $[x^1, x^{75}]$ is not $p$-link-symmetric.
\end{ex}
\begin{defi}\label{def:maximaldecreasing}
An arc $A = [x, y]$ is called \emph{maximal decreasing (basic) quasi-$p$-symmetric} if it is
decreasing (basic) quasi-$p$-symmetric and there is no decreasing (basic) quasi-$p$-symmetric arc
$B \supset A$ that consists of more (basic) quasi-$p$-symmetric arcs than $A$.

Similarly we define a \emph{maximal increasing (basic) quasi-$p$-symmetric} arc.
\end{defi}

\begin{remark}\label{rem:quasi-sym}
(a) Propositions~\ref{lem:symarc} and~\ref{lem:until_lm} imply that $A = [x, y]$ is a
maximal decreasing basic quasi-$p$-symmetric arc if and only if $A$ is a decreasing basic
quasi-$p$-symmetric and for $x = x^0, x^1, \dots , x^{n-1}, x^n = y$ which satisfy (i) of
Definition~\ref{def:decreasing}, there exists a point $x^{-1}$ such that $[x^{-1}, x^1]$ is $p$-symmetric
with midpoint $x^0$ and $x^n$ is not a $p$-point. The arc $[x^{-1}, x^n]$ we call the \emph{extended}
maximal decreasing basic quasi-$p$-symmetric arc. The points $x^{-1}$,
$x = x^0, x^1, \dots , x^{n-1}, x^n = y$ we call the \emph{nodes} of $[x^{-1}, x^n]$.

The analogous statement holds if $A$ is a maximal increasing basic quasi-$p$-symmetric arc:
If $A = [x^0, x^{n+1}]$ is an extended maximal increasing basic
quasi-$p$ symmetric arc, then $x^0$ is not a $p$-point, $L_p(x^n) > L_p(z)$ for every $p$-point $z \in A$,
$z \ne x^n$, and $L_p(x^{n-1}) = L_p(x^{n+1})$.

(b) Let $A = [x^0, x^{n+1}]$ be an extended maximal increasing basic
quasi-$p$ symmetric arc. If there exists an additional $p$-point
$x^{n+2}$ such that the arc $[x^n, x^{n+2}]$ is quasi-$p$ symmetric with midpoint $x^{n+1}$,
Propositions~\ref{lem:symarc} and~\ref{lem:until_lm} imply that $A$ is contained in an $p$-symmetric arc
$B = [x^0, x^{2n}]$ where the arc $[x^{n-1}, x^{2n}]$ is an extended maximal decreasing basic
quasi-$p$-symmetric arc.

The analogous statement holds if $A$ is a maximal decreasing basic quasi-$p$-symmetric arc.
\end{remark}

\begin{lemma}\label{lem:max-quasi-sym}
Every (basic) quasi-$p$-symmetric arc $A$ can be extended to a maximal decreasing/increasing (basic)
quasi-$p$-symmetric arc $B \supset A$.
\end{lemma}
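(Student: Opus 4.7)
My plan is to construct $B$ by iteratively extending $A$ at both ends using Propositions~\ref{lem:symarc} and~\ref{lem:until_lm}, with termination guaranteed by the monotonicity of $p$-levels of successive midpoints.

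First I reduce to the basic case: if $A$ is merely quasi-$p$-symmetric (not basic), Proposition~\ref{lem:not-link-sym}(ii) already places $A$ inside a decreasing concatenation of quasi-$p$-symmetric arcs whose extreme components are basic, so it suffices to construct the maximal extension starting from a basic quasi-$p$-symmetric arc $A = [x^0, x^2]$ with midpoint $x^1$. By Corollary~\ref{cor:qusi_symmetry_type}, after possibly swapping the two endpoints, we may arrange $L_p(x^0) > L_p(x^1) > L_p(x^2)$; the goal is then to produce a maximal decreasing (basic) quasi-$p$-symmetric arc containing $A$, the increasing case being symmetric.

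To extend at the low-level end, I apply Proposition~\ref{lem:until_lm} to $A$, obtaining a point $x^3$ with $[x^1, x^3]$ quasi-$p$-symmetric and midpoint $x^2$. If $x^3$ is not a $p$-point, the downward extension stops here and the low-end maximality condition of Remark~\ref{rem:quasi-sym}(a) is met. Otherwise, if $[x^1, x^3]$ is basic, Corollary~\ref{cor:qusi_symmetry_type} together with $L_p(x^1) > L_p(x^2)$ forces $L_p(x^3) < L_p(x^2)$, so a new basic block has been appended with strictly lower midpoint level; if $[x^1, x^3]$ is non-basic, Proposition~\ref{lem:not-link-sym}(ii) already embeds it into a longer decreasing concatenation whose new midpoints still have strictly lower level than $x^1$. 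Iterating produces a strictly decreasing sequence of midpoint levels $L_p(x^1) > L_p(x^2) > L_p(x^3) > \cdots$ in $\N0$, which must terminate in finitely many steps at an index $n$ for which $x^n$ is no longer a $p$-point.

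To extend at the high-level end, I apply Proposition~\ref{lem:symarc} at $x^0$ to obtain $x^{-1}$ with $[x^{-1}, x^1]$ (quasi-)$p$-symmetric and having $x^0$ as its midpoint. If the resulting arc is $p$-symmetric, this is precisely the symmetric cap required by Remark~\ref{rem:quasi-sym}(a). If it is only quasi-$p$-symmetric, then $[x^{-1}, x^1]$ is itself a basic quasi-$p$-symmetric arc with midpoint of strictly higher $p$-level than $x^1$; relabelling and iterating yields, by the same strict-monotonicity argument applied in the opposite direction, a $p$-symmetric cap after finitely many steps. Concatenating the downward and upward extensions produces the maximal decreasing arc $B \supset A$. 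The main obstacle throughout is the bookkeeping at each iteration step: one must verify that the arc produced by Proposition~\ref{lem:until_lm} (respectively Proposition~\ref{lem:symarc}) is basic and that its new endpoint is a $p$-point with the expected level comparison, handling every non-basic sub-case through Proposition~\ref{lem:not-link-sym}; the decisive technical input making this work is the chain structure provided by Proposition~\ref{prop:chains}, which controls how arcs enter, turn in, and exit each link.
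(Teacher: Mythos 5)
Your construction of $B$ by iterating Propositions~\ref{lem:symarc} and~\ref{lem:until_lm} is in the right spirit, and your treatment of the low-level end is fine: a strictly decreasing sequence in $\N0$ must terminate. The problem is the high-level end. You write that ``by the same strict-monotonicity argument applied in the opposite direction'' the upward extension must also terminate after finitely many steps. But a strictly \emph{increasing} sequence of $p$-levels $L_p(x^0) < L_p(x^{-1}) < L_p(x^{-2}) < \cdots$ is not bounded above by anything you have noted, so this argument does not terminate; in each application of Proposition~\ref{lem:symarc} the resulting arc $[x^{-i-1},x^{-i+1}]$ could be merely quasi-$p$-symmetric rather than $p$-symmetric, and there is no a priori reason why a $p$-symmetric cap must ever appear.

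This is precisely where the paper's proof does its real work. It takes the hypothetical infinite chain $(x_i)_{i\ge 0}$ with $L_p(x_i) < L_p(x_{i+1})$ and $[x_i,x_{i+2}]$ (basic) quasi-$p$-symmetric, and observes that by definition all the $x_i$ lie in just two links $\ell,\hat\ell$. By Lemma~\ref{lem:maxL} (in the basic case, via the construction in Proposition~\ref{lem:not-link-sym}(ii) in the non-basic case) the $p$-points in $\bigcup_i [x_0,x_i]\setminus(\ell\cup\hat\ell)$ have only finitely many distinct $p$-levels. But $\bigcup_i[x_0,x_i]$ is a ray and hence contains $p$-points of every sufficiently high $p$-level; since the closure of $\pi_p(\{x: L_p(x)\ge N\})$ contains the whole Cantor set $\omega(c)$, these points cannot be confined to two links, a contradiction. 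You need to supply this kind of global, dynamical input to rule out unbounded upward extension; the elementary monotonicity observation from the decreasing side does not transfer.
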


\begin{proof}
We take the largest decreasing (basic) quasi-$p$-symmetric arc $B$ containing $A$. The only thing to prove
is that there really is a largest $B$. If this were not the case, then there would be an infinite sequence
$(x_i)_{i \ge 0}$ with $x_0 \in \partial A$, $L_p(x_i) < L_p(x_{i+1})$ and $[x_i, x_{i+2}]$ is a (basic)
quasi-$p$-symmetric arc for all $i \ge 0$. By the definition of (basic) quasi-$p$-symmetric arc, there are two
links $\ell$ and $\hat \ell$ containing $x_i$ for all even $i$ and odd $i$ respectively. (Note that
$\ell = \hat \ell$ is possible.)
By Lemma~\ref{lem:maxL} for the basic case, the $p$-points in
$\bigcup_{i \ge 0} [x_0, x_i] \setminus (\ell \cup \hat \ell)$ can only have finitely many different
$p$-levels. By the construction in the proof of Proposition~\ref{lem:not-link-sym} $(ii)$,
the same conclusion is true for the non-basic case as well.
But $\bigcup_{i \ge 0} [x_0, x_i]$ is a ray, and contains $p$-points of all
(sufficiently high) $p$-levels. Since the closure of $\pi_p(\{ x : L_p(x) \ge N\})$ contains $\omega(c)$
for all $N$, this set is not contained in the $\pi_p$-images of the two links $\ell$ and $\hat \ell$
only. So we have a contradiction.
\end{proof}

\begin{proposition}\label{thm:linksym}
Let $A$ be a $p$-link-symmetric arc with midpoint $m$ and $\partial A = \{ x, y \} \subset E_p$. Then
$A$ is $p$-symmetric, or is contained in an extended maximal decreasing/increasing (basic)
quasi-$p$-symmetric arc, or is contained in a $p$-symmetric arc which is the concatenation of two arcs, one
of which is a maximal increasing (basic) quasi-$p$-symmetric arc, and the other one is a maximal decreasing
(basic) quasi-$p$-symmetric arc.
\end{proposition}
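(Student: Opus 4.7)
The plan is to analyze $A$ according to whether or not it is $p$-symmetric and, when it is not, to locate the unique maximal (quasi-)symmetric structure that contains it.

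If $A$ is $p$-symmetric we are done, so assume $A$ is $p$-link-symmetric but not $p$-symmetric. I would first isolate an atomic obstruction: since $A$ fails to be $p$-symmetric, the remark after Definition \ref{def:linksym} guarantees a symmetric pair of link arc-components $A_i$, $A_{k-i}$ whose midpoints have different $p$-levels. Walking inward and shrinking (invoking Definition \ref{basic-quasi-p-symmetric}) yields a basic quasi-$p$-symmetric sub-arc $A_0 \subset A$. By Lemma \ref{lem:max-quasi-sym} I then extend $A_0$ to a maximal decreasing or increasing basic quasi-$p$-symmetric arc $B$. Remark \ref{rem:quasi-sym}(a) supplies the canonical extension $\tilde B \supset B$ whose two boundaries are: a $p$-symmetric ``cap'' on the high-level side (from Proposition \ref{lem:symarc}) and a non-$p$-point on the low-level side (from Proposition \ref{lem:until_lm}).

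Next I would show that either $A \subseteq \tilde B$ or else $A$ is contained in a $p$-symmetric concatenation of the required form. On the low-level side, the boundary of $\tilde B$ is not a $p$-point, so because $\partial A \subset E_p$ the corresponding boundary point of $A$ must lie strictly inside $\tilde B$; this direction contributes the second conclusion. On the high-level cap side, suppose $A$ extends past the cap. If the extension is by another basic quasi-$p$-symmetric piece of the opposite monotonicity, Remark \ref{rem:quasi-sym}(b) concatenates this with the cap to produce a genuinely $p$-symmetric arc containing $A$, which is the third conclusion. Otherwise, the extension produces a non-basic quasi-$p$-symmetric arc whose two boundary arcs are both $p$-symmetric, and Proposition \ref{lem:not-link-sym}(i) then provides the contradiction that such an extension fails to be $p$-link-symmetric, contradicting the hypothesis on $A$.

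The main obstacle is the careful treatment of the non-basic case: if at some stage $A_0$, or any of its enlargements arising during the case analysis, is quasi-$p$-symmetric but not basic, then Proposition \ref{lem:not-link-sym}(ii) must be applied to replace it by the enveloping decreasing quasi-$p$-symmetric arc before the argument above can be repeated. One must verify that this replacement does not push $A$ outside the maximal structure, which requires tracking the $p$-levels at every symmetric link-component of $A$ and invoking Lemma \ref{prop2} to guarantee that the midpoint of maximal $p$-level inside each extended (basic) quasi-symmetric block is unique. With this bookkeeping, the monotone (decreasing/increasing) organization of $\tilde B$ aligns with the folding of $A$, no additional asymmetry can appear outside $\tilde B$, and the trichotomy in the statement is exhaustive.
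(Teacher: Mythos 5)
Your proposal follows essentially the same route as the paper's proof: locate the innermost quasi-$p$-symmetric sub-arc of $A$ centered at $m$ (the paper does this by taking the first index $j$ with $L_p(x^{-j}) \ne L_p(x^{j})$, so that $[x^{-j},x^j]$ is quasi-$p$-symmetric with midpoint $m$), extend it to an extended maximal decreasing/increasing (basic) quasi-$p$-symmetric arc via Lemma~\ref{lem:max-quasi-sym} and Remark~\ref{rem:quasi-sym}, and handle the non-basic situation with Proposition~\ref{lem:not-link-sym}. You spend more words than the paper on arguing that $A$ is actually contained in the resulting structure, which the paper leaves implicit.

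That extra care exposes one step that you assert rather than prove: ``On the low-level side, the boundary of $\tilde B$ is not a $p$-point, so because $\partial A \subset E_p$ the corresponding boundary point of $A$ must lie strictly inside $\tilde B$.'' As written this is a non sequitur: the fact that $\partial A$ is a $p$-point while $\partial\tilde B$ is not does not by itself prevent $\partial A$ from lying beyond $\partial\tilde B$. What actually forces the containment on that side is the $p$-link-symmetry of $A$ combined with the maximality of $\tilde B$ --- essentially the same mechanism you correctly invoke on the high side via Remark~\ref{rem:quasi-sym}(b) and Proposition~\ref{lem:not-link-sym}(i). A second small point: when you ``walk inward'' to extract the atomic quasi-$p$-symmetric piece, you must extract it \emph{centered at $m$} (as in the paper's choice of the first mismatching index $j$); a basic quasi-$p$-symmetric sub-arc with some other midpoint would not automatically yield a maximal structure aligned with $A$'s midpoint. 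With these two points tightened, the argument is sound and matches the paper's.
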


\begin{proof}
Let $A \cap E_p = \{{x}^{-k'}, \dots , {x}^{-1}, {x}^{0}, {x}^{1}, \dots {x}^{k}\}$ and ${x}^{0} = m$.
Without loss of generality we assume that ${x}^{-k'}$ and ${x}^{k}$ are the midpoints of the link-tips
of $A$. If $L_{p}(x^{-i}) = L_{p}(x^{i})$, for $i = 1, \dots , \min \{ k', k \}$, then the arc $A$ is
either $p$-symmetric, or (basic) quasi-$p$-symmetric. Hence in this case the theorem is true.

Let us assume that there exists $j < \min \{ k', k \}$ such that $L_{p}(x^{-i}) = L_{p}(x^{i})$, for
$i = 1, \dots , j-1$, and $L_{p}(x^{-j}) \ne L_{p}(x^{j})$. The arc $[x^{-j}, x^{j}]$ is (basic)
quasi-$p$-symmetric and by Lemma~\ref{lem:max-quasi-sym} and Remark~\ref{rem:quasi-sym}, there exists an
extended maximal decreasing/increasing (basic) quasi-$p$-symmetric arc which contains $[x^{-j}, x^{j}]$.
Hence in this case the theorem is also true.
\end{proof}

\medskip
\noindent
Faculty of Mathematics, University of Vienna,\\
Nordbergstra{\ss}e 15/Oskar Morgensternplatz 1, A-1090 Vienna, Austria\\
\texttt{henk.bruin@univie.ac.at}\\
\texttt{http://www.mat.univie.ac.at/}$\sim$\texttt{bruin}

\medskip
\noindent
Department of Mathematics, University of Zagreb.\\
Bijeni\v cka 30, 10 000 Zagreb, Croatia\\
\texttt{sonja@math.hr}\\
\texttt{http://www.math.hr/}$\sim$\texttt{sonja}

\end{document}